\DeclareMathOperator{\grad}{\nabla}
\DeclareMathOperator{\rg}{rg}
\DeclareMathOperator{\dom}{dom}
\DeclareMathOperator*{\argmin}{arg\,min}
\newcommand{\dd}{\, \mathrm{d}}
\newcommand{\dx}{\dd x}
\newcommand{\dy}{\dd y}
\newcommand{\dz}{\dd z}
\newcommand{\dxi}{\dd \xi}
\newcommand{\dv}{\dd v}
\newcommand{\conv}{\star}
\newcommand{\wR}{R} 
\newcommand{\mF}{\mathcal{F}} %
\newcommand{\yn}{{y^n}}
\newcommand{\un}{{u^n}}
\newcommand{\pn}{{p^n}}
\newcommand{\sigman}{{\sigma^n}}
\newcommand{\R}{\mathbb{R}}
\newcommand{\N}{\mathbb{N}}
\newcommand{\Testfunctions}{\mathcal{D}(\Omega)}
\newcommand{\Lp}{{L^p (\Omega)}}
\newcommand{\Lq}{{{L^q (\Omega)}}}
\newcommand{\Ltwo}{{L^2 (\Omega)}}
\newcommand{\Ltworn}{{L^2 (\R^N)}}
\newcommand{\Linfty}{{L^\infty (\Omega)}}
\newcommand{\He}{{H^1 (\Omega)}}
\newcommand{\Hen}{H^{1}_0(\Omega)}
\newcommand{\Hs}{{H^s (\Omega)}}
\newcommand{\Hend}{H^{-1}(\Omega)}
\newcommand{\Wad}{{W_{ad}}}
\newcommand{\Uad}{{U_{ad}}}
\newcommand{\Fad}{{F_{ad}}}
\newcommand{\vertiii}[1]{{\left\vert\kern-0.25ex\left\vert\kern-0.25ex\left\vert #1 
    \right\vert\kern-0.25ex\right\vert\kern-0.25ex\right\vert}}
\newcommand{\ydelta}{{y_\delta}}
\newcommand{\diam}{d}
\newcommand{\m}[1]{\mathcal{#1}}
\newcommand{\dual}{'}
\newcommand{\yex}{y^\dagger} 
\newcommand{\uex}{u^\dagger}
\newcommand{\ie}{i.e. }
\newcommand{\eg}{e.g. }
\newcommand{\ale}{a.e.\ }
\newcommand{\Lin}{\mathcal{L}} 
\newcommand{\ubar}[1]{\underaccent{\bar}{#1}}
\title{Learning nonlocal regularization operators}
\author{Gernot Holler\thanks{Institute of Mathematics and Scientific Computing, University of Graz,
  Heinrichstr. 36, 8010 Graz, Austria, \email{gernot.holler@uni-graz.at}. The author gratefully acknowledges support by the International Research Training Group IGDK 1754 „Optimization and Numerical Analysis for Partial Differential Equations with Nonsmooth Structures“, funded by the German Research Council (DFG) and the Austrian Science Fund (FWF):[W 1244-N18].} 
\and Karl Kunisch \thanks{Institute of Mathematics and Scientific Computing, University of Graz, Heinrichstr. 36, 8010 Graz, Austria, and Radon
Institute, Austrian Academy of Sciences, Linz, Austria, \email{karl.kunisch@uni-graz.at}. The author gratefully acknowledges partial support by the  ERC advanced grant 668998 (OCLOC) under the EU's H2020
research program.}}
\begin{document}

\maketitle
\begin{abstract}
A learning approach for determining which operator from a class of nonlocal operators is optimal for the regularization of an inverse problem is investigated. The considered class of nonlocal operators is motivated by the use of squared fractional order Sobolev seminorms as regularization operators. First fundamental results from the theory of regularization with local operators are extended to the nonlocal case. Then a framework based on a bilevel optimization strategy is developed which allows to choose nonlocal regularization operators from a given class which i) are optimal with respect to a suitable performance measure on a training set, and ii) enjoy particularly favorable properties. Results from numerical experiments are also provided.
\end{abstract}

%

\begin{quote}
\textbf{Keywords:}
nonlocal operators, optimal control, inverse problems
\end{quote}
\begin{quote}
\textbf{AMS Subject classification: }
	49J20, 45Q05
\end{quote}

\section{Introduction}
In this work we discuss the use of a family of nonlocal energy seminorms for the regularization of inverse problems governed by partial differential equations. The archetypes for the considered family are Sobolev seminorms $|u|_{\Hs}$ of fractional order $s \in (0,1)$. The corresponding regularized inverse problems are
\begin{equation}\label{prob:reg_Sobolev_seminorm}
\min_{u \in \dom (S) \cap \Hs } \| S(u) - \ydelta \|_\Ltwo^2 + \nu |u|_{\Hs}^2.
\end{equation}
Here $S \colon \dom (S)  \subseteq \Ltwo \to \Ltwo$ is the given forward operator, $\nu >0$ is a regularization parameter, and $\ydelta \in \Ltwo$ is the given measurement. The considered family of nonlocal energy seminorms $(|\cdot|_{\gamma,s})_{\gamma \in \Wad}$  will differ from Sobolev seminorms only by additional weighting terms $\gamma \in \Wad$. A precise definition of the nonlocal energy seminorms and the set of admissible weights $\Wad$ will be given in \Cref{sec:preliminaries} below. The corresponding regularized inverse problem for a particular weight $\gamma \in \Wad$ is
\begin{equation}
\min_{u \in \dom (S) \cap \Hs} \| S(u) - \ydelta \|_\Ltwo^2 + | u |_{\gamma,s}^2.
\end{equation}

\paragraph{The learning problem}
To determine which element from this family of nonlocal energy seminorms is particularly suitable for a given problem we use a learning approach: We assume to be given ground truth data and noisy measurements, \ie a set $(\yex_i, \uex_i, \ydelta_i)_{1 \leq i \leq N_\text{Train}}$ such that
\begin{equation*}
S(\uex_i) =\yex_i,
\end{equation*}
and $\ydelta_i$ are noisy measurements of $\yex_i$ for $1 \leq i \leq N_{\text{Train}}$. We then determine a weight $\gamma^*$ such that solutions to the corresponding inverse problems represent the ground truth data particularly well. This is done by choosing $\gamma^* \in \Wad$ as a solution to
\begin{equation}\label{prob:bilevelsimple_introduction}
\tag{BP}
\begin{split}
\min_{\gamma \in \Wad, u_i \in H^{s}(\Omega)} \frac{1}{2 N_\text{Train}} \sum\limits_{i=1}^{N_{\text{Train}}} \| &u_i - \uex_i \|_{\Ltwo}^2 + R(\gamma) \quad  \\ \text{subject to} \quad &u_i \in \argmin_{u \in \dom(S) \cap \Hs} \| S(u) - \ydelta_i\|^2_{\Ltwo} + | u |_{\gamma,s}^2.
\end{split}
\end{equation}
Here, $R \colon \Wad \to \R$ is an added regularization operator. We will favor the choice $R$ as the $L^1$ norm. This has the effect that nonlocality is only utilized if its effect is sufficiently strong, otherwise it is set to zero. As a side effect of this procedure, we obtain that in the regularized inverse problem the system matrices, which tend to be densely populated in the context of fractional order regularization, in fact become more sparse. Except for the numerical experiments, we only consider the case $N_\text{Train} = 1$. However, generalization of the analytical results to the case of multiple data vectors is straightforward using product spaces.

This paper is organized as follows. In Section 2 the necessary background is provided, and a stability property for solutions to Poisson-type nonlocal equations, which will be frequently needed throughout this work, is derived. Moreover, the class of weights considered in this work is introduced. Section 3 is concerned with the case of a linear forward problem. After deriving some basic properties of the regularized inverse problem, existence of solutions to the learning problem is proven and an optimality system is derived. In Section 4 we discuss the nonlinear case. After providing some results, which can be applied to general nonlinear functions, we discuss in detail the problem of estimating the convection term in an elliptic PDE. Finally, in Section 5 results from numerical experiments are presented which demonstrate the feasibility of our approach. 

\paragraph{Related work}
Note that \eqref{prob:bilevelsimple_introduction} is a bilevel optimization problem, \ie an optimization problem, where the constraint involves another optimization problem (referred to as the lower level problem). A standard reference on bilevel optimization is \cite{Dempe2002}. Nonlocal operators have recently received a significant amount of attention in the literature, see \eg\cite{Gunzburger2010,Du2012, DElia2013, Alali2015}. A learning problem for determining optimal filter parameters for nonlocal regularization operators in the context
of image denoising problems was recently investigated in \cite{DElia2019}. As a particular instance of nonlocal regularization operators, fractional-type regularization operators are considered in
\cite{Antil2018b, Antil2019}. In terms of learning theory, the problem of learning regularization operators can be viewed as a supervised learning problem. The problem of choosing regularization operators from a parametrized class of functions based on training data, is studied in \cite{haber2003}. Optimal spectral filters for finite dimensional inverse problems are learned in \cite{chung2011designing}. Learning strategies for choosing regularization parameters in the context of multi-penalty Tikhonov regularization are investigated \eg in \cite{Kunisch_Bilevel2013, DELOSREYES2016464,chung2017,Holler2018}. The problem of learning the discrepancy function is considered in \cite{de2013image}. In many of the mentioned references, the lower level problem is not differentiable, which in turn complicates the derivation of optimality conditions. This issue is then often overcome by smoothing the lower level problem. A different approach is presented in \cite{ochs2015bilevel}, where instead of smoothing the lower level problem,  it is suggested to replace the lower level problem constraint by a differentiable update rule, which is given as the n-th step in an iterative procedure to determine approximate solutions to the lower level problem. A bilevel optimization approach to choosing regularization operators for which no ground truth training data is needed is considered in \cite{Hintermueller2017}.

\section{Nonlocal energy spaces}\label{sec:preliminaries}
\subsection{Preliminaries}
Throughout this work, unless otherwise stated,  we let $s \in (0,1)$ and let $\Omega$ denote a nonempty, open, connected, and bounded Lipschitz domain in $\R^N$, where $N \in \N$. Furthermore, $| \cdot |$ denotes the Euclidian norm of a vector in $\R^N$. 
Following \cite[Section 4]{Du2012}, we introduce the notion of a nonlocal energy seminorm. 
\begin{definition}[nonlocal energy]
Let $\gamma \in L^\infty(\Omega \times \Omega)$ be nonnegative and symmetric \ale on $\Omega \times \Omega$. For $u \in \Ltwo$ define a \emph{nonlocal energy seminorm} by
\begin{equation*}
| u |_{\gamma,s} \coloneqq \left(\, \,  \iint_{\Omega \times \Omega }  \frac{|u(y) - u(x)|^2}{|x-y|^{N+2s}} \gamma(x,y) \, \mathrm{d}y \mathrm{d}x\right)^{1/2}.
\end{equation*}
The corresponding \emph{nonlocal energy space} is defined by
\begin{equation*}
V^{\gamma,s}(\Omega) \coloneqq \left\{ v \in L^2(\Omega) : | v |_{\gamma,s} < \infty \right\}
\end{equation*}
and endowed with the norm
\begin{equation}\label{eq:nonlocal_energy_norm}
\|u\|_{V^{\gamma,s}(\Omega) } \coloneqq \left(\|u\|_{L^2(\Omega)}^2 + |u|_{\gamma,s}^2 \right)^{1/2}.
\end{equation}
\end{definition}
\begin{remark}
If $\gamma$ is equal to $1$ almost everywhere on $\Omega \times \Omega$, we write 
\begin{equation*}
\Hs \coloneqq V^{\gamma,s}(\Omega), \quad |u|_{\Hs} \coloneqq |u|_{\gamma,s}, \quad \text{and} \quad \| u \|_{\Hs} \coloneqq \|u \|_{V^{\gamma,s}(\Omega)}.
\end{equation*}
With this notation, $\Hs$ coincides with the usual Sobolev space of fractional order $s$ (also known as Sobolev-Slobodeckij space), see \eg \cite{Nezza2012} and \cite[Definition 1.3.2.1]{Grisvard1986}. 

\end{remark}

We now provide a set of assumptions on the weight $\gamma$, under which the nonlocal energy norm $\|\cdot \|_{V^{\gamma,s}(\Omega)}$ defined by \eqref{eq:nonlocal_energy_norm} is equivalent to the fractional order Sobolev norm $\|\cdot \|_{\Hs}$, which in turn implies that the corresponding nonlocal energy space $V^{\gamma,s}(\Omega)$ coincides with the fractional order Sobolev space $\Hs$.

\begin{assumption}\label{ass:kernelass1}
The weight $\gamma \in L^\infty(\Omega \times \Omega)$ is nonnegative and symmetric \ale on $\Omega \times \Omega$. Furthermore, there exist constants $\gamma_1, \gamma_2, \delta >0$ such that for almost all $(x,y) \in \Omega \times \Omega$ the following statements hold:
\begin{enumerate}[label=\roman*)]
\item If $| x- y | \leq \delta$, then $\gamma_1 \leq \gamma(x,y)$.
\item $\gamma(x,y) \leq \gamma_2$.
\end{enumerate}
\end{assumption}
\begin{remark} If, given any even function $\kappa \colon B_\delta(0) \to  \R$ satisfying $
\gamma_1 \leq \kappa(z) \leq \gamma_2$ for all $z \in B_\delta(0)$, we define $\gamma$ by
\begin{equation*}
\gamma(x,y) \coloneqq \begin{cases}
\kappa(x-y) \quad &\text{if} \quad |x-y| \leq \delta, \\
0 \quad &\text{else},
\end{cases} \quad \text{for almost all } (x,y) \in \Omega \times \Omega,
\end{equation*}
then $\gamma$ satisfies \Cref{ass:kernelass1}.
\end{remark}
The following result is a combination of Lemmas 4.1 and 4.2 from \cite{Du2012}.
\begin{lemma}\label{lem:hsnormbounded}
Let $\gamma \in L^\infty(\Omega \times \Omega)$ satisfy \Cref{ass:kernelass1} and let $u \in \Ltwo$. Then
\begin{equation}\label{eq:hs_semi_norm_bounded}
 |u|_{\gamma,s}^2 \leq \gamma_2 |u|^2_{\Hs}  \quad \text{and} \quad |u|^2_{\Hs} \leq \gamma_1^{-1} |u|_{\gamma,s}^2 + 4 | \Omega | \delta^{-N-2s} \|u\|^2_{L^2(\Omega)}.
\end{equation}
Here, $|\Omega|$ denotes the Lebesgue-measure of $\Omega$.
\end{lemma}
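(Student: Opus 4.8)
The first bound is immediate from monotonicity of the integral: since $0 \le \gamma(x,y) \le \gamma_2$ for almost all $(x,y)$, one has
\begin{equation*}
|u|_{\gamma,s}^2 = \iint_{\Omega\times\Omega} \frac{|u(y)-u(x)|^2}{|x-y|^{N+2s}}\,\gamma(x,y)\dy\dx \le \gamma_2 \iint_{\Omega\times\Omega} \frac{|u(y)-u(x)|^2}{|x-y|^{N+2s}}\dy\dx = \gamma_2\,|u|_{\Hs}^2,
\end{equation*}
and nothing more is needed here. The only point worth noting is that the integrand is nonnegative and measurable, so the inequality holds in $[0,\infty]$ and in particular gives a finite bound whenever $|u|_{\Hs}<\infty$.

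For the second bound the plan is to split the domain of integration $\Omega\times\Omega$ into the ``near-diagonal'' part $D_\delta \coloneqq \{(x,y)\in\Omega\times\Omega : |x-y|\le\delta\}$ and its complement $D_\delta^c$ inside $\Omega\times\Omega$, and estimate $|u|_{\Hs}^2$ as the sum of the two contributions. On $D_\delta$, Assumption~\ref{ass:kernelass1}(i) gives $\gamma(x,y)\ge\gamma_1$ almost everywhere, hence $1 \le \gamma_1^{-1}\gamma(x,y)$ there, and therefore
\begin{equation*}
\iint_{D_\delta}\frac{|u(y)-u(x)|^2}{|x-y|^{N+2s}}\dy\dx \le \gamma_1^{-1}\iint_{D_\delta}\frac{|u(y)-u(x)|^2}{|x-y|^{N+2s}}\,\gamma(x,y)\dy\dx \le \gamma_1^{-1}\,|u|_{\gamma,s}^2,
\end{equation*}
where in the last step I simply extended the integral back to all of $\Omega\times\Omega$, using again that the (weighted) integrand is nonnegative.

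On $D_\delta^c$ the singular kernel is harmless: there $|x-y|^{-(N+2s)}\le\delta^{-N-2s}$, so
\begin{equation*}
\iint_{D_\delta^c}\frac{|u(y)-u(x)|^2}{|x-y|^{N+2s}}\dy\dx \le \delta^{-N-2s}\iint_{\Omega\times\Omega}|u(y)-u(x)|^2\dy\dx.
\end{equation*}
Applying the elementary inequality $|u(y)-u(x)|^2\le 2|u(y)|^2+2|u(x)|^2$ and integrating each term separately (each variable ranging over $\Omega$, contributing a factor $|\Omega|$) yields $\iint_{\Omega\times\Omega}|u(y)-u(x)|^2\dy\dx \le 4|\Omega|\,\|u\|_{L^2(\Omega)}^2$. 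Summing the two contributions gives $|u|_{\Hs}^2 \le \gamma_1^{-1}|u|_{\gamma,s}^2 + 4|\Omega|\,\delta^{-N-2s}\|u\|_{L^2(\Omega)}^2$, as claimed. There is no real obstacle here; the proof is entirely elementary, the only things to be slightly careful about are keeping all integrands nonnegative so the domain extensions are legitimate, and tracking the constants through the splitting so that the factor $4|\Omega|\delta^{-N-2s}$ comes out exactly.
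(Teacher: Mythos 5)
Your proof is correct, and the constants come out exactly as stated. The paper itself does not prove this lemma but defers to Lemmas 4.1 and 4.2 of the cited reference \cite{Du2012}, where the argument is precisely the one you give: the trivial upper bound from $\gamma\le\gamma_2$, and the near-/far-diagonal splitting at $|x-y|=\delta$ combined with $|u(y)-u(x)|^2\le 2|u(y)|^2+2|u(x)|^2$ for the reverse estimate.
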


As a direct corollary of \Cref{lem:hsnormbounded} we obtain that if $\gamma \in L^\infty(\Omega \times \Omega)$ satisfies \Cref{ass:kernelass1}, then the corresponding nonlocal energy space is topologically equivalent to the fractional order Sobolev space $\Hs$.

\begin{corollary}[Equivalence of norms]\label{cor:equivalenceofnorms}
There exist constants $m,M >0$ such that for all $\gamma \in L^\infty(\Omega \times \Omega)$ satisfying \Cref{ass:kernelass1} we have
\begin{equation*}
m \|u\|_{\Hs} \leq \|u\|_{V^{\gamma,s}(\Omega)} \leq M \| u \|_{\Hs}, \quad \text{for all } u \in \Ltwo.
\end{equation*}
In particular, the norms on $\Hs$ and $V^{\gamma,s}(\Omega)$ are equivalent.
\end{corollary}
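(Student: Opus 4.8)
The plan is to read off both inequalities directly from \Cref{lem:hsnormbounded}, the only point requiring a moment's care being that the resulting constants $m$ and $M$ must depend solely on the structural quantities $\gamma_1, \gamma_2, \delta$ (together with $|\Omega|$, $N$, $s$) and not on the particular weight $\gamma$ — which is transparent from the explicit form of the bounds in \eqref{eq:hs_semi_norm_bounded}, since $\gamma$ enters there only through those fixed constants.

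For the upper bound I would start from the definition \eqref{eq:nonlocal_energy_norm} and use the first estimate in \eqref{eq:hs_semi_norm_bounded}:
\[
\|u\|_{V^{\gamma,s}(\Omega)}^2 = \|u\|_{\Ltwo}^2 + |u|_{\gamma,s}^2 \leq \|u\|_{\Ltwo}^2 + \gamma_2\, |u|_{\Hs}^2 \leq \max\{1,\gamma_2\}\,\|u\|_{\Hs}^2,
\]
so one may take $M \coloneqq \max\{1,\gamma_2\}^{1/2}$. For the lower bound I would instead use the second estimate in \eqref{eq:hs_semi_norm_bounded} to control $|u|_{\Hs}^2$:
\[
\|u\|_{\Hs}^2 = \|u\|_{\Ltwo}^2 + |u|_{\Hs}^2 \leq \bigl(1 + 4|\Omega|\,\delta^{-N-2s}\bigr)\|u\|_{\Ltwo}^2 + \gamma_1^{-1}\,|u|_{\gamma,s}^2 \leq C\,\|u\|_{V^{\gamma,s}(\Omega)}^2,
\]
with $C \coloneqq \max\{\,1 + 4|\Omega|\,\delta^{-N-2s},\ \gamma_1^{-1}\,\}$; hence $m \coloneqq C^{-1/2}$ works. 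Both constants are manifestly independent of the choice of $\gamma$ within the admissible class.

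Finally, the ``in particular'' assertion is immediate from the two-sided bound: $\|u\|_{V^{\gamma,s}(\Omega)}$ is finite if and only if $\|u\|_{\Hs}$ is, so $V^{\gamma,s}(\Omega) = \Hs$ as sets, and the displayed chain of inequalities is precisely the statement that the two norms are equivalent. There is essentially no genuine obstacle in this argument; the whole content is the bookkeeping that makes the uniformity in $\gamma$ visible, and the reduction to \Cref{lem:hsnormbounded} does all the real work.
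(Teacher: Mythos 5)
Your proposal is correct and is exactly the argument the paper intends: the corollary is stated as a direct consequence of \Cref{lem:hsnormbounded}, and your explicit constants $M=\max\{1,\gamma_2\}^{1/2}$ and $m=\max\{1+4|\Omega|\delta^{-N-2s},\gamma_1^{-1}\}^{-1/2}$, depending only on the fixed structural quantities $\gamma_1,\gamma_2,\delta$ from \Cref{ass:kernelass1}, supply precisely the uniformity in $\gamma$ that the statement asserts.
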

It is straightforward to verify that
\begin{equation*}
\langle u, v \rangle_{\gamma,s} \coloneqq + \iint_{\Omega \times \Omega} \frac{\left(u(x) - u(y)\right)\left(v(x)-v(y)\right) }{|x-y|^{N+2s}} \gamma(x,y) \, \dx\dy \quad 
\end{equation*}
defines a symmetric and positive semidefinite bilinear form on $V^{\gamma,s}(\Omega)$. Moreover, if we let $\langle \cdot , \cdot \rangle_{V^{\gamma,s}(\Omega)} \coloneqq \langle \cdot , \cdot \rangle_\Ltwo + \langle \cdot, \cdot \rangle_{\gamma, s}$, then $\langle \cdot, \cdot \rangle_{V^{\gamma,s}(\Omega)}$ is an inner product on $V^{\gamma,s}(\Omega)$ that induces the norm $\| \cdot \|_{V^{\gamma, s}(\Omega)}$. Since $\Hs$ is complete, the equivalence of norms established in \Cref{cor:equivalenceofnorms} now implies that $V^{\gamma,s}(\Omega)$ is also complete. Hence, $V^{\gamma,s}(\Omega)$ is a Hilbert space. We let
\begin{equation*}
\Pi^0(\Omega) \coloneqq \left\{ f \in L^2(\Omega) \mid \exists c \in \R \colon f(x) = c \quad \text{a.e. on } \Omega \right\}
\end{equation*}
denote the space of functions in $L^2(\Omega)$ which are constant \ale on $\Omega$. We denote by $Q^0 \colon \Ltwo \to \Pi^0(\Omega)$ the $L^2$-orthogonal projection on $\Pi^0(\Omega)$. For $u \in \Ltwo$ we have
\begin{equation*}
Q^0 u (x) = c, \quad \text{ where } c \coloneqq (1/|\Omega|) \int_{\Omega} u(x) \mathrm{d} x.
\end{equation*}
\begin{lemma}\label{lem:kernel_of_seminorm}
Let $\gamma \in L^\infty(\Omega \times \Omega)$ satisfy \Cref{ass:kernelass1}. Then for $u \in \Ltwo$ we have $|u|_{\gamma,s}=0$ if and only if $u \in \Pi^0(\Omega)$.
\end{lemma}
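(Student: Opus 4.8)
The plan is to treat the two implications separately. The ``if'' part is immediate: if $u = c$ \ale on $\Omega$ for some constant $c \in \R$, then $u(y) - u(x) = 0$ for a.e. $(x,y) \in \Omega \times \Omega$, hence the integrand in the definition of $|u|_{\gamma,s}^2$ vanishes almost everywhere and $|u|_{\gamma,s} = 0$.

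For the ``only if'' part, assume $|u|_{\gamma,s} = 0$. The function $(x,y) \mapsto \frac{|u(y) - u(x)|^2}{|x-y|^{N+2s}}\gamma(x,y)$ is nonnegative and measurable on $\Omega \times \Omega$ (the diagonal being a Lebesgue null set), so it must vanish for a.e. $(x,y) \in \Omega \times \Omega$. By part i) of \Cref{ass:kernelass1} we have $\gamma(x,y) \geq \gamma_1 > 0$ whenever $|x-y| \leq \delta$; consequently $u(x) = u(y)$ for a.e. $(x,y) \in \Omega \times \Omega$ with $|x-y| \leq \delta$. It then remains to promote this local identity to global constancy, which is where connectedness of $\Omega$ enters.

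To this end, fix $x_0 \in \Omega$ and choose $r > 0$ with $B_r(x_0) \subseteq \Omega$ and $2r \leq \delta$. For $x, y \in B_r(x_0)$ we then have $|x-y| < \delta$, so $u(x) = u(y)$ for a.e. $(x,y) \in B_r(x_0) \times B_r(x_0)$, and Fubini's theorem provides a constant $c(x_0) \in \R$ with $u = c(x_0)$ \ale on $B_r(x_0)$. A short argument shows that $c(x_0)$ is independent of the admissible choice of $r$ (two such balls overlap in a set of positive measure) and that the resulting function $x_0 \mapsto c(x_0)$ is locally constant on $\Omega$; since $\Omega$ is connected, it equals a single constant $c_* \in \R$. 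Covering $\Omega$ by countably many such balls (Lindel\"of property) and using that a countable union of null sets is null, we conclude $u = c_*$ \ale on $\Omega$, that is, $u \in \Pi^0(\Omega)$.

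The only nonroutine step is the last one: converting the pointwise-a.e. relation ``$u(x) = u(y)$ for nearby $x, y$'' into genuine constancy of $u$, which requires some care in tracking the exceptional null sets in the patching argument (equivalently, one could mollify $u$ on interior subdomains to obtain a continuous locally constant function and pass to the limit). Note that \Cref{lem:hsnormbounded} is not needed here: it only yields $u \in \Hs$, which does not by itself force the seminorm to vanish.
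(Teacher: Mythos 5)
Your proof is correct and follows essentially the same route as the paper's: the vanishing of the nonnegative integrand forces $u(x)=u(y)$ for almost all nearby pairs, this gives local constancy on small balls, and connectedness of $\Omega$ upgrades it to global constancy (the step the paper dismisses as ``standard arguments'' and you spell out via the locally constant function $x_0 \mapsto c(x_0)$ and a Lindel\"of covering). Your closing observation is also apt: the argument works directly for $u \in \Ltwo$, whereas the paper's proof nominally starts from $u \in \Hs$.
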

\begin{proof}
Using Fubini's theorem, it is straightforward to verify that for $u \in \Pi^0(\Omega)$ we have $|u|_{\gamma,s} = 0$. Conversely, if for $u \in \Hs$ it holds that $|u|_{\gamma,s}=0$, then
\begin{equation*}
\int_\Omega \frac{|u(x) - u(y)|^2}{|x-y|^{N+2s}} \gamma(x,y) \, \dy = 0 \quad \quad \text{for almost all } x \in \Omega.
\end{equation*}
This implies that $u$ is constant \ale on $B_\delta(x) \cap \Omega$ for almost all $x \in \Omega$. Since $\Omega$ is connected, the claim follows by standard arguments. 
\end{proof}
\begin{remark}
The requirement that $\Omega$ is connected is essential to ensure that for every $\sigma$ satisfying \Cref{ass:kernelass1} and every $u \in \Ltwo$ the seminorm $|u|_{\sigma,s}$ is zero if and only if $u$ is constant almost everywhere. The fractional order Sobolev seminorm $|u|_{H^s(\Omega}$, however, has this property for all open sets $\Omega$, connected or not. The reason for this is that while the weight is equal to 1 almost everywhere for the fractional order Sobolev norm, in general $\sigma(|x-y|)$ might be zero for $| x - y | > \delta$.
\end{remark}

\begin{lemma}[Poincare-Wirtinger inequality for nonlocal energy spaces]\label{lem:wirtinger}
There exists a constant $C > 0$ such that for every $\gamma \in L^\infty(\Omega \times \Omega)$ satisfying \Cref{ass:kernelass1} and every $u \in \Hs$ with $Q^0 u = 0$ it holds that
\begin{equation}\label{eq:poincare_wirtinger}
\|u \|_{\Ltwo} \leq C |u |_{\gamma, s}.
\end{equation}
\end{lemma}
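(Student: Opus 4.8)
The plan is to first reduce the statement, uniformly in $\gamma$, to a single Poincaré–Wirtinger inequality for a fixed reference weight, and then to establish that inequality by a standard compactness argument. For the reduction I would introduce the weight $\gamma_\delta(x,y) \coloneqq 1$ for $|x-y| \le \delta$ and $\gamma_\delta(x,y) \coloneqq 0$ otherwise, which satisfies \Cref{ass:kernelass1} (with $\gamma_1 = \gamma_2 = 1$) by the remark following that assumption. Part i) of \Cref{ass:kernelass1} gives $\gamma(x,y) \ge \gamma_1 \gamma_\delta(x,y)$ a.e. for every admissible $\gamma$, whence $|u|_{\gamma,s}^2 \ge \gamma_1 |u|_{\gamma_\delta,s}^2$. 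It is therefore enough to find a constant $C' > 0$, independent of $\gamma$, such that $\|u\|_{\Ltwo} \le C' |u|_{\gamma_\delta,s}$ for all $u \in \Hs$ with $Q^0 u = 0$; the asserted inequality then follows with $C = C' \gamma_1^{-1/2}$.

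For the reduced inequality I would argue by contradiction: assuming no such $C'$ exists, one obtains (after normalization) a sequence $(u_n) \subseteq \Hs$ with $Q^0 u_n = 0$, $\|u_n\|_{\Ltwo} = 1$ and $|u_n|_{\gamma_\delta,s} \to 0$. Applying \Cref{lem:hsnormbounded} to $\gamma_\delta$ bounds $|u_n|_{\Hs}$, so $(u_n)$ is bounded in $\Hs$; by the compact embedding $\Hs \hookrightarrow \Ltwo$ (valid since $\Omega$ is a bounded Lipschitz domain and $s \in (0,1)$) a subsequence converges weakly in $\Hs$ and strongly in $\Ltwo$ to some $u$, with $\|u\|_{\Ltwo} = 1$ and, by continuity of $Q^0$ on $\Ltwo$, $Q^0 u = 0$. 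Using \Cref{cor:equivalenceofnorms}, the map $u \mapsto |u|_{\gamma_\delta,s}^2$ is a continuous, nonnegative quadratic form on $\Hs$, hence convex and weakly lower semicontinuous, so $|u|_{\gamma_\delta,s}^2 \le \liminf_n |u_n|_{\gamma_\delta,s}^2 = 0$. Then \Cref{lem:kernel_of_seminorm}, applied to $\gamma_\delta$, forces $u$ to be constant a.e., and with $Q^0 u = 0$ this yields $u = 0$, contradicting $\|u\|_{\Ltwo} = 1$.

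The crux is the uniformity of the constant over the entire family of admissible weights; I expect this to be the only real subtlety, and it is handled by the pointwise lower bound $\gamma \ge \gamma_1$ on $\{|x-y| \le \delta\}$, which collapses the problem to the single weight $\gamma_\delta$. The remaining ingredients — the compact Sobolev embedding on Lipschitz domains, weak lower semicontinuity of a continuous convex functional, and the kernel characterization from \Cref{lem:kernel_of_seminorm} — are routine, so beyond invoking \Cref{lem:hsnormbounded} to obtain the $\Hs$-bound no lengthy computation should be needed.
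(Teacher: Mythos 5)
Your proposal is correct and follows essentially the same route as the paper: the paper likewise reduces to the single reference weight $\gamma_{\min} = \gamma_1 \chi_{\{|x-y|\le\delta\}}$ (which is just your $\gamma_1\gamma_\delta$) via the pointwise lower bound, and then runs the identical compactness/contradiction argument using the norm equivalence, the compact embedding $\Hs \hookrightarrow \Ltwo$, weak lower semicontinuity of the seminorm, and \Cref{lem:kernel_of_seminorm}. No gaps.
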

\begin{proof}
Let $\gamma_{\min} \coloneqq \gamma_1 \chi_{A}$, where $\chi_A$ denotes the characteristic function of the set $A \coloneqq \{(x,y) \in \Omega \times \Omega \mid |x-y|\geq \delta \}$. It is easy to show that $|u|_{\gamma_{\min},s} \leq |u|_{\gamma,s} $ for every $u \in \Hs$ and every $\gamma$ satisfying \Cref{ass:kernelass1}. Consequently, it suffices to prove the claim for $\gamma = \gamma_{\min}$. We argue by contradiction. If the claim is wrong, then there is a sequence $(\un)$ in $\Hs$ such that 
\begin{equation*}
Q^0 \un = 0, \quad \|\un\|_{\Ltwo}=1, \quad \text{and} \quad |\un|_{\gamma_{\min},s} \leq 1/n \quad \text{for all } n \in \N.
\end{equation*}
Using the equivalence of norms established in \Cref{cor:equivalenceofnorms}, it follows that $(\un)$ is bounded in $\Hs$. Since $\Hs$ is reflexive, $(u^n)$ has an accumulation point $u \in \Hs$ with respect to the weak topology on $\Hs$. Since $\Hs$ is compactly embedded in $\Ltwo$ (see \cite[Theorem 1.4.3.2]{Grisvard1986}), it follows that $u$ is also an accumulation point of $(u^n)$ with respect to the strong topology on $\Ltwo$.
Using the continuity of $Q^0$ on $\Ltwo$ and the weak lower semi continuity of the nonlocal energy seminorm we deduce that
$Q^0 u =0$ and $|u|_{\gamma_{\min},s} = 0$.
From \Cref{lem:kernel_of_seminorm} it is clear that this implies $u=0$.
However, since $u$ is an accumulation point of $(u^n)$ with respect to the strong topology on $\Ltwo$, we must also have $\| u \|_{\Ltwo} = \lim_{n \to \infty} \| u^n  \|_\Ltwo = 1$,
which is a contradiction. Hence, the proof is finished.
\end{proof}

\subsection{A stability property}
\begin{lemma}[Stability]\label{lem:stability}
Let $1 \leq p , q \leq \infty$ be such that $1/p+1/q = 1$ and $\Hs$ is compactly embedded in $\Lq$. Let $\un \rightharpoonup u$ in $\Hs$, $\gamma^n \rightharpoonup^* \gamma$ in $L^\infty(\Omega \times \Omega)$, and  $\pn \rightharpoonup p \in \Lp$, where $(\un)$ in $\Hs$, $(\gamma^n)$ in $L^\infty(\Omega \times \Omega)$, and $(\pn)$ in $\Lp$ are sequences related by

\begin{equation}\label{eq:stability_assumption}
\langle u^n, v \rangle_{\gamma^n,s} = \langle p^n, v \rangle_{\Lp, \Lq}  \quad  \text{for all } v \in \Hs \text{ and all } n \in \N.
\end{equation}
Then 
\begin{equation}\label{eq:1}
\langle u, v \rangle_{\gamma,s} = \langle p, v \rangle_{\Lp,\Lq} \quad \text{for all } v \in \Hs \quad \text{and} \quad \lim_{n \to \infty} |u^n|_{\gamma^n,s}^2 = |u|_{\gamma,s}^2 .
\end{equation}

\end{lemma}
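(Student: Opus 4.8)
The plan is to pass to the limit in the variational identity \eqref{eq:stability_assumption} term by term, and then obtain the energy convergence as a consequence of testing the limit equation with $u$ itself and exploiting the weak convergences.

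First I would prove the first assertion in \eqref{eq:1}. Fix $v \in \Hs$. The right-hand side is straightforward: since $p^n \rightharpoonup p$ in $\Lp$ and $v \in \Lq$, we have $\langle p^n, v\rangle_{\Lp,\Lq} \to \langle p, v\rangle_{\Lp,\Lq}$. For the left-hand side the key observation is that, for fixed $v \in \Hs$, the function
\begin{equation*}
\Phi_v(x,y) \coloneqq \frac{(u^n(x)-u^n(y))(v(x)-v(y))}{|x-y|^{N+2s}}
\end{equation*}
does \emph{not} converge nicely on its own, so I cannot simply pair it against $\gamma^n \rightharpoonup^* \gamma$. Instead I would split: write $\langle u^n, v\rangle_{\gamma^n,s} = \langle u^n - u, v\rangle_{\gamma^n,s} + \langle u, v\rangle_{\gamma^n,s}$. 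For the second piece, the kernel $(x,y)\mapsto \frac{(u(x)-u(y))(v(x)-v(y))}{|x-y|^{N+2s}}$ is a fixed $L^1(\Omega\times\Omega)$ function (by the Cauchy–Schwarz inequality and finiteness of $|u|_{\Hs}$ and $|v|_{\Hs}$), so $\langle u, v\rangle_{\gamma^n,s} \to \langle u, v\rangle_{\gamma,s}$ by the weak-$*$ convergence $\gamma^n\rightharpoonup^*\gamma$ in $L^\infty$. For the first piece, I would bound it using the boundedness of $(\gamma^n)$ in $L^\infty$ (which follows from weak-$*$ convergence) and Cauchy–Schwarz by $\|\gamma^n\|_\infty^{1/2}\,|u^n-u|_{\gamma^n,s}^{1/2}$-type estimates — more precisely by $\bigl(\sup_n\|\gamma^n\|_\infty\bigr)^{1/2}|u^n-u|_{\Hs}\,|v|_{\Hs}$ via \Cref{lem:hsnormbounded}; but this only uses boundedness of $u^n-u$, not smallness. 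The honest route for this term is to use that $u^n - u \rightharpoonup 0$ in $\Hs$, hence $u^n - u \to 0$ strongly in $\Lq$ by the assumed compact embedding, and recognize $\langle u^n-u, v\rangle_{\gamma^n,s}$ as a bounded linear functional in $u^n-u$ — unfortunately it is only $\Hs$-bounded, not $\Lq$-continuous. So I would instead argue differently: test \eqref{eq:stability_assumption} with $v = u^n$, giving $|u^n|_{\gamma^n,s}^2 = \langle p^n, u^n\rangle_{\Lp,\Lq}$, and since $p^n\rightharpoonup p$ in $\Lp$ while $u^n \to u$ strongly in $\Lq$, the product of a weakly convergent and a strongly convergent sequence converges, so $|u^n|_{\gamma^n,s}^2 \to \langle p, u\rangle_{\Lp,\Lq}$. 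This already handles the limsup of the energies cheaply.

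With the energy bound in hand I would return to the limit equation. The sequence $(u^n)$ is bounded in $\Hs$, and since $\Hs\hookrightarrow V^{\gamma^n,s}(\Omega)$ with uniform constants (\Cref{cor:equivalenceofnorms}), the seminorms $|u^n|_{\gamma^n,s}$ are uniformly bounded. Consider $\langle u^n, v\rangle_{\gamma^n,s}$ for fixed $v$: writing $g^n(x,y) \coloneqq \frac{u^n(x)-u^n(y)}{|x-y|^{(N+2s)/2}}$ and $h(x,y)\coloneqq\frac{v(x)-v(y)}{|x-y|^{(N+2s)/2}}$, we have $\langle u^n,v\rangle_{\gamma^n,s} = \iint g^n h\,\gamma^n\,\dx\dy$ with $g^n$ bounded in $L^2(\Omega\times\Omega)$ and $h\in L^2(\Omega\times\Omega)$ fixed. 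The measure $\gamma^n\,\dx\dy$ converges weak-$*$ and $g^n$ converges weakly in $L^2$; the product requires care, but because $u^n\to u$ in $L^2(\Omega)$ one shows $g^n\to g\coloneqq\frac{u(x)-u(y)}{|x-y|^{(N+2s)/2}}$ in $L^2_{\mathrm{loc}}(\{x\ne y\})$ and, combined with the uniform $L^2$ bound, $g^n\rightharpoonup g$ weakly in $L^2(\Omega\times\Omega)$. Then $\iint g^n h\gamma^n = \iint g^n (h\gamma^n)$; since $h\gamma^n \rightharpoonup^* h\gamma$ in... — this is the delicate point. The cleanest fix is: $h\in L^2$, $\gamma^n$ bounded in $L^\infty$, so $h\gamma^n$ is bounded in $L^2$ and one checks $h\gamma^n\rightharpoonup h\gamma$ weakly in $L^2$ (test against $L^2$ functions of the form $\varphi$: $\iint \varphi h\gamma^n\to\iint\varphi h\gamma$ since $\varphi h\in L^1$ and $\gamma^n\rightharpoonup^*\gamma$ in $L^\infty$). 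But a weak$\times$weak product in $L^2$ need not pass to the limit. To resolve it, split once more: $\iint g^n h\gamma^n - \iint g h\gamma = \iint(g^n-g)h\gamma^n + \iint g h(\gamma^n-\gamma)$; the second term $\to 0$ since $gh\in L^1$; for the first term, bound $|\iint(g^n-g)h\gamma^n|\le \sup_n\|\gamma^n\|_\infty \|g^n-g\|_{L^2}\|h\|_{L^2}$ — and here I need $g^n\to g$ \emph{strongly} in $L^2(\Omega\times\Omega)$, which is exactly where the compact embedding $\Hs\hookrightarrow\hspace{-0.9em}\hookrightarrow\Lq$ enters nontrivially: passing to a subsequence, $u^n\to u$ in $\Lq$ and a.e., and $|u^n|_{\Hs}$ bounded gives, via a Vitali/uniform-integrability argument on $g^n$, strong $L^2$ convergence of $g^n$ (this uses $q$ large enough that $\Hs\hookrightarrow\Lq$ is compact, the standing hypothesis of the lemma). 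This yields $\langle u^n,v\rangle_{\gamma^n,s}\to\langle u,v\rangle_{\gamma,s}$, hence the first identity in \eqref{eq:1}.

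Finally, for the energy convergence $|u^n|_{\gamma^n,s}^2\to|u|_{\gamma,s}^2$: from testing the limit equation with $v=u$ we get $|u|_{\gamma,s}^2 = \langle p,u\rangle_{\Lp,\Lq}$, and from testing \eqref{eq:stability_assumption} with $v = u^n$ we had $|u^n|_{\gamma^n,s}^2 = \langle p^n, u^n\rangle_{\Lp,\Lq}\to\langle p,u\rangle_{\Lp,\Lq}$ since $p^n\rightharpoonup p$ in $\Lp$ and $u^n\to u$ strongly in $\Lq$. Comparing the two gives the claim. A subsequence–contradiction wrapper upgrades the subsequential strong-$L^2$ convergence of $g^n$ to convergence of the full sequence of scalar quantities, since the limits $\langle u,v\rangle_{\gamma,s}$ and $|u|_{\gamma,s}^2$ are uniquely determined. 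The main obstacle, as flagged, is justifying the passage to the limit in $\iint g^n h\gamma^n\,\dx\dy$, i.e. handling the simultaneous weak-$*$ convergence of the weights and weak convergence of $u^n$; the resolution is the strong $L^2(\Omega\times\Omega)$ convergence of the difference quotients $g^n$, which is precisely what the compact embedding hypothesis buys us.
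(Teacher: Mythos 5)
Your reduction of the energy identity to the limit equation is sound and coincides with the paper's own argument: testing \eqref{eq:stability_assumption} with $v=u^n$, passing to the limit via $p^n\rightharpoonup p$ in $\Lp$ against $u^n\to u$ strongly in $\Lq$, and comparing with the limit equation tested with $v=u$. The gap is in your passage to the limit in $\langle u^n,v\rangle_{\gamma^n,s}$. Writing $g^n(x,y)=\frac{u^n(x)-u^n(y)}{|x-y|^{(N+2s)/2}}$, you reduce everything to the claim that $g^n\to g$ strongly in $L^2(\Omega\times\Omega)$. But $\|g^n-g\|_{L^2(\Omega\times\Omega)}=|u^n-u|_{\Hs}$, so this claim is literally the statement that $u^n\to u$ strongly in the $\Hs$ seminorm --- which does not follow from $u^n\rightharpoonup u$ in $\Hs$ together with the compact embedding $\Hs\hookrightarrow\Lq$, and is false in general. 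The Vitali route cannot rescue it: uniform integrability of $|g^n|^2$ fails precisely because the seminorm mass of an oscillating bounded sequence concentrates on shrinking neighbourhoods of the diagonal $\{x=y\}$. For instance, on $\Omega=(0,1)$ with $s=1/2$, the functions $u^n(x)=n^{-1/2}\sin(nx)$ converge to $0$ strongly in every $\Lq$ and are bounded in $H^{1/2}(\Omega)$, yet $\|g^n\|_{L^2((0,1)^2)}=|u^n|_{H^{1/2}(\Omega)}$ does not tend to zero. Were strong $L^2(\Omega\times\Omega)$ convergence of $g^n$ available, the whole lemma would be a one-line consequence; it is exactly what one does not have.

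The paper's way around this is to first take $v\in C_c^\infty(\bar\Omega)$ and shift regularity between the two factors: choose $\varepsilon>0$ with $s+\varepsilon<1$ and set $s'=s-\varepsilon$; the embedding $\Hs\hookrightarrow H^{s'}(\Omega)$ is compact, so $u^n\to u$ \emph{strongly} in $H^{s'}(\Omega)$ (this is the strong convergence one can actually extract), and by the Cauchy--Schwarz inequality
\begin{equation*}
\iint_{\Omega\times\Omega}\frac{|(u^n-u)(x)-(u^n-u)(y)|\,|v(x)-v(y)|}{|x-y|^{N+2s}}\,\dx\dy
\le |u^n-u|_{H^{s'}(\Omega)}\left(\,\iint_{\Omega\times\Omega}\frac{(v(x)-v(y))^2}{|x-y|^{N+2(2s-s')}}\,\dx\dy\right)^{1/2},
\end{equation*}
where the second factor is finite because $v$ is Lipschitz and $2s-s'-1=s+\varepsilon-1<0$, so the worsened singularity is absorbed by the difference quotient of the smooth test function. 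This yields strong $L^1(\Omega\times\Omega)$ convergence of the integrand, which can then be paired with the weak-$*$ convergent $\gamma^n$ (the term you handled correctly), and a density argument ($C_c^\infty(\bar\Omega)$ dense in $\Hs$, both sides of the identity continuous in $v$ with respect to $\|\cdot\|_{\Hs}$) extends the identity to all $v\in\Hs$. Some version of this regularity-trading device is needed to close your argument.
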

\begin{proof}
The proof is divided into three steps. 
\begin{description}

\item [Step 1:] We show that for all $v \in C_c^\infty(\bar{\Omega})$
\begin{equation*}
\frac{(\un(x)- \un(y))(v(x)-v(y))}{|x-y|^{N+2s}} \to \frac{(u(x)- u(y))(v(x)-v(y))}{|x-y|^{N+2s}} \quad \text{in } L^1(\Omega \times \Omega).
\end{equation*}
Since $s<1$ we can find $\varepsilon > 0$ such that $s + \varepsilon < 1$ and $s' \coloneqq s - \varepsilon > 0$.
Since $s' < s$, we have $H^{s'}(\Omega)$ is compactly embedded in $\Hs$ (see \cite[Theorem 1.4.3.2]{Grisvard1986}) and thus
$\un \to u$ in $H^{s'}(\Omega)$. Using H\"older's inequality for the first, and the mean value theorem for the second inequality below, we estimate
\begin{multline*}
\iint_{\Omega \times \Omega} \frac{|(\un(x) -\un(y)) -(u(x) - u(y))| |v(x) - v(y)|}{|x-y|^{N+2s}} \dx \dy \\  \leq | \un - u |_{H^{s'}(\Omega)} \left( \, \, \iint_{\Omega \times \Omega} \frac{(v(x) - v(y))^2}{|x-y|^{N+2(2s-s')}} \dx \dy \right)^{1/2} \\ \leq | \un  - u |_{H^{s'}(\Omega)} \| D v\|_\Linfty \underbrace{\left( \, \,\iint_{\Omega \times \Omega} \frac{1}{|x-y|^{N+2(2s-s'-1)}} \dx \dy \right)^{1/2}}_{C},
\end{multline*}
where $C < \infty$ since $2s - s' - 1  = s +\varepsilon -1 < 0$.

\item [Step 2:] We compute
\begin{multline}\label{eq:eqtest}
\langle p, v \rangle_{\Lp,\Lq} = \lim\limits_{n \to \infty} \langle \pn, v\rangle_{\Lp,\Lq} \\ = \lim\limits_{n \to \infty} \iint_{\Omega \times \Omega} \frac{(\un(x) - \un(y))(v(x)-v(y)) \gamma^n(x,y)}{|x-y|^{N+2s}} \dx \dy \\ = \iint_{\Omega \times \Omega} \frac{(u(x) - u(y))(v(x)-v(y)) \gamma(x,y)}{|x-y|^{N+2s}} \dx \dy \quad \text{for all } v \in C_c^\infty(\bar{\Omega}).
\end{multline}
where the result from the first step justifies the third equality. Recalling that $C_c^\infty(\bar{\Omega})$ is dense in $\Hs$ (see \cite[Theorem 1.4.2.1]{Grisvard1986}) and observing that \eqref{eq:eqtest} is continuous with respect to $v$ on $\Hs$, it follows that \eqref{eq:eqtest} holds for all $v \in \Hs$. This proves the first equality in \eqref{eq:1}.

\item [Step 3:] It remains to prove the second equality in \eqref{eq:1}. Subtracting the first equality in \eqref{eq:1} with $v = u$ from \eqref{eq:stability_assumption} with $v = u^n$, we obtain
\begin{equation}\label{eq:limit_testing_stability}
|u^n|_{\gamma^n,s}^2 - |u|_{\gamma,s}^2 = \langle \pn, \un \rangle_{\Lp,\Lq} - \langle p, u \rangle_{\Lp,\Lq}
\end{equation}
Since $\un \to u$ in $\Lq$ and $p^n \rightharpoonup p$ in $\Lp$ it follows that the right hand side of \eqref{eq:limit_testing_stability} tends to zero as $n \to \infty$. This finishes the proof. 
\end{description}
\end{proof}

\subsection{Distance dependent weights}
Let $d$ denote the diameter of $\Omega$, \ie $\diam \coloneqq \sup_{x,y \in \Omega} | x-y|$. From now on we restrict ourselves to $\gamma \in L^\infty(\Omega \times \Omega)$ of the form
\begin{equation}\label{eq:extension}
\gamma(x,y) = \sigma(|x-y|) \quad \text{\ale on }  \Omega \times \Omega,
\end{equation}
where $\sigma \in L^\infty((0,d))$ satisfies the following conditions:
\begin{enumerate}[label= (A\arabic*)]
\item $0 \leq \sigma(t) \leq \gamma_2$ \ale on $(0,d)$, \label{item:weight_upper_bound}
\item $0 < \gamma_1 \leq \sigma(t)$ \ale on $(0,\delta)$. \label{item:weight_lower_bound}
\end{enumerate}
To simplify notation, if $\sigma$ and $\gamma$ are related by \eqref{eq:extension}, then we write $|u|_{\sigma,s} \coloneqq |u|_{\gamma, s}$.
The set of feasible weights is defined by
\begin{equation*}
\Wad \coloneqq \Big\{ \sigma \in L^\infty((0,d)) \mid \sigma \text{ satisfies } \ref{item:weight_upper_bound} \text{ and } \ref{item:weight_lower_bound} \Big\},
\end{equation*}
Some care must be taken, since it is not immediately clear, although intuitively reasonable, that for every $\sigma \in L^\infty((0,d))$ there is $\gamma \in L^\infty(\Omega \times \Omega)$ satisfying \eqref{eq:extension}. The difficulty stems from the fact that $L^\infty((0,d))$ consists only of equivalence classes of functions coinciding in the almost everywhere sense on $(0,d)$. We emphasize that \eqref{eq:extension} must be understood in the sense that it holds for all representatives of the equivalence classes $\sigma$ and $\gamma$. In the following proposition we confirm that the assumption that $\gamma \in L^\infty(\Omega \times \Omega)$ is well-defined by \eqref{eq:extension} for any $\sigma \in L^\infty((0,d))$ is indeed justified. To avoid confusion between equivalence classes of functions and their representatives, in the following proposition, we use the special notation $\lfloor \cdot \rfloor$ to denote equivalence classes of functions.

\begin{proposition}\label{prop:embedding_in_Linfty}
For every $\lfloor \phi \rfloor \in L^\infty((0,\diam))$ there exists a unique $\lfloor \gamma \rfloor \in L^\infty(\Omega \times \Omega)$ such that for all $\phi \in \lfloor \phi \rfloor$ and $\gamma \in \lfloor \gamma \rfloor$
\begin{equation}\label{eq:extension2}
\gamma(x,y) = \phi(| x - y | ) \quad \text{\ale on } \Omega \times \Omega.
\end{equation}
Moreover, it holds that
\begin{equation}\label{eq:continuous_embedding}
\| \lfloor \gamma \rfloor \|_{L^\infty(\Omega \times \Omega)} \leq \| \lfloor \phi \rfloor \|_{L^\infty((0,\diam))}.
\end{equation}
\end{proposition}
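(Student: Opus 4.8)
The plan is to construct the equivalence class $\lfloor \gamma \rfloor$ by first picking an arbitrary representative $\phi$ of $\lfloor \phi \rfloor$, defining $\gamma(x,y) \coloneqq \phi(|x-y|)$ pointwise on $\Omega \times \Omega$, and then verifying three things: (i) this $\gamma$ is measurable and essentially bounded, with the norm bound \eqref{eq:continuous_embedding}; (ii) the equivalence class $\lfloor \gamma \rfloor$ it generates does not depend on the choice of representative $\phi$; and (iii) every representative of $\lfloor \gamma \rfloor$ satisfies \eqref{eq:extension2} for every representative of $\lfloor \phi \rfloor$, and $\lfloor \gamma \rfloor$ is the unique such class. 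The map $(x,y) \mapsto |x-y|$ is continuous, hence Borel measurable, from $\Omega \times \Omega$ into $(0,d)$ (note $|x-y| < d$ and, since we may discard the diagonal which has measure zero in $\R^N \times \R^N$, also $|x-y| > 0$ a.e.); composing a Borel-measurable representative $\phi$ with this map gives a Borel-measurable, hence Lebesgue-measurable, function $\gamma$ on $\Omega \times \Omega$. The pointwise bound $0 \le \phi \le \|\lfloor\phi\rfloor\|_{L^\infty}$ a.e. on $(0,d)$ for a suitable representative transfers to $\gamma$ once we know the preimage under $(x,y)\mapsto|x-y|$ of a Lebesgue-null set in $(0,d)$ is Lebesgue-null in $\Omega\times\Omega$.

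The crux of the argument — and the step I expect to be the main obstacle — is precisely this last measure-theoretic fact: \emph{if $E \subseteq (0,d)$ has one-dimensional Lebesgue measure zero, then $\{(x,y) \in \Omega \times \Omega : |x-y| \in E\}$ has $2N$-dimensional Lebesgue measure zero.} Without it, the construction is ill-defined, because changing $\phi$ on a null set of $(0,d)$ could in principle change $\gamma$ on a non-null set. I would prove this using the coarea formula, or more elementarily by Fubini: write $\Omega \times \Omega \subseteq \R^N \times \R^N$ and change variables $(x,y) \mapsto (x, z)$ with $z = x - y$, so the set in question is contained in $\R^N \times \{z : |z| \in E\}$; then $\{z \in \R^N : |z| \in E\}$ has $N$-dimensional measure equal to a constant times $\int_E r^{N-1}\,dr = 0$ since $E$ is null, and Fubini gives that the product-type set is $2N$-null. (Alternatively one invokes that the pushforward of Lebesgue measure on $\R^N$ under $z \mapsto |z|$ is absolutely continuous with respect to one-dimensional Lebesgue measure on $(0,\infty)$, with density $c_N r^{N-1}$, which is exactly the content of integration in polar coordinates.) This well-definedness established, well-definedness of the class $\lfloor\gamma\rfloor$ and the norm inequality \eqref{eq:continuous_embedding} follow immediately.

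For uniqueness, suppose $\lfloor \gamma_1 \rfloor$ and $\lfloor \gamma_2 \rfloor$ both satisfy \eqref{eq:extension2}. Fixing one representative $\phi$, we get $\gamma_1(x,y) = \phi(|x-y|) = \gamma_2(x,y)$ a.e. on $\Omega \times \Omega$, hence $\lfloor \gamma_1 \rfloor = \lfloor \gamma_2 \rfloor$. It remains to check the quantifier structure in \eqref{eq:extension2}: that \emph{every} representative of $\lfloor\gamma\rfloor$ agrees a.e. with \emph{every} representative $\phi'$ of $\lfloor\phi\rfloor$ composed with $|x-y|$. This is where the measure-theoretic lemma is used a second time: if $\gamma' \in \lfloor\gamma\rfloor$ and $\phi' \in \lfloor\phi\rfloor$, then $\gamma' = \gamma$ a.e. on $\Omega\times\Omega$ and $\phi' = \phi$ a.e. on $(0,d)$; the latter together with the lemma gives $\phi'(|x-y|) = \phi(|x-y|)$ for a.e.\ $(x,y)$, and chaining the three a.e.\ identities $\gamma'(x,y) = \gamma(x,y) = \phi(|x-y|) = \phi'(|x-y|)$ yields \eqref{eq:extension2} for the pair $(\phi',\gamma')$. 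I would keep the exposition compact, stating the measure-theoretic lemma explicitly (perhaps as a short preliminary claim with a two-line polar-coordinates proof), since it is the only non-bookkeeping ingredient; everything else is unwinding the definition of equivalence classes.
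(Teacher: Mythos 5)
Your proof is correct, but it takes a genuinely different route from the paper's. You reduce everything to a single measure-theoretic lemma — the preimage under $(x,y)\mapsto |x-y|$ of a Lebesgue-null subset of $(0,\diam)$ is null in $\Omega\times\Omega$ — proved by the unimodular change of variables $(x,y)\mapsto(x,x-y)$, polar coordinates for the indicator of $\{z:|z|\in E\}$, and Fubini. Once that is in place, independence of the representative, the quantifier structure of \eqref{eq:extension2}, the norm bound \eqref{eq:continuous_embedding}, and uniqueness all follow by chasing pointwise a.e.\ identities. The paper instead argues by duality: it fixes a representative, shows $\gamma\psi\in L^1(\Omega\times\Omega)$ for every $\psi\in L^1(\Omega\times\Omega)$, rewrites $\iint_{\Omega\times\Omega}\gamma\psi$ via Fubini and polar coordinates as $\int_\Omega\int_0^{\diam}\phi(r)\int_{\partial B(x,r)\cap\Omega}\psi\,\dS\,\mathrm{d}r\,\dx$, observes that this expression depends only on the class $\lfloor\phi\rfloor$, and invokes the fundamental lemma of the calculus of variations; the norm bound is then obtained by a separate contradiction argument on a putative set $A$ of positive measure. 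Both proofs ultimately rest on the same fact — the pushforward of $N$-dimensional Lebesgue measure under $z\mapsto|z|$ has density $c_N r^{N-1}$ — but your packaging is more elementary and arguably cleaner for this proposition in isolation, giving the sup-norm inequality for free rather than by contradiction. What the paper's formulation buys is the explicit integral identity against $L^1$ test functions, which is reused essentially verbatim in the proof of the weak$^*$-to-weak$^*$ continuity in \Cref{lem:weakstarcontinuity}; with your approach that identity would still need to be established separately there. The only points to make fully explicit in a polished write-up are (i) replacing $\phi$ by a Borel representative so that $\phi(|x-y|)$ is measurable, and (ii) enlarging the null set $E$ to a Borel null set before taking preimages, so that measurability of the preimage follows from completeness of Lebesgue measure; both are routine and you have implicitly accounted for them.
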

\begin{proof}
First, we take a particular representative $\phi \colon (0,d) \to \R$ of the equivalence class of measurable functions $\lfloor \phi \rfloor \in L^\infty((0,d))$ to define the equivalence class of functions $\lfloor \gamma \rfloor$ as the set of all measurable functions $\gamma : \Omega \times \Omega \to \R$ satisfying 
\begin{equation*}
\gamma(x,y) = \phi(| x - y| ) \quad \text{for almost all } (x,y) \in \Omega \times \Omega.
\end{equation*}
We now prove that $\lfloor \gamma \psi \rfloor \in L^1(\Omega \times \Omega)$ for every $\lfloor \psi \rfloor \in L^1(\Omega \times \Omega)$. Using that Fubini's theorem and polar coordinates can be employed for all nonnegative and measurable functions (see \cite[Theorems 2.39 and 2.49]{Folland1999realanalysis}), this follows from the estimate
\begin{multline*}
\|\lfloor \gamma \psi \rfloor \|_{L^1(\Omega \times \Omega)} = \int_\Omega \int_0^{\diam} \int_{\partial B(x,r) \cap \Omega} | \phi(|x-y|) \psi(x,y)| \, \mathrm{d} S(y) \mathrm{d}r   \dx \\ = \int_\Omega \int_0^{\diam}  |\phi(r)| \int_{\partial B(x,r) \cap \Omega }  |\psi(x,y)| \, \mathrm{d} S(y) \mathrm{d}r  \dx\leq \| \lfloor \phi \rfloor \|_{L^\infty((0,\diam))} \| \lfloor \psi \rfloor \|_{L^1(\Omega \times \Omega)}.
\end{multline*}
Here and in the sequel $S$ denotes the $N-1$ dimensional surface measure on a sphere with radius $r$.
Having established that $\lfloor \gamma \psi \rfloor \in L^1(\Omega \times \Omega)$, we use Fubini's theorem and polar coordinates for real valued integrable functions to obtain
\begin{equation*}
\iint_{\Omega \times \Omega} \gamma(x,y) \psi(x,y) \dx dy = \int_\Omega \int_0^{\diam} \phi(r) \int_{\partial B(x,r) \cap \Omega}  \psi(x,y) \, \mathrm{d} S(y) \mathrm{d}r  \dx.
\end{equation*}
Note that the right-hand side of this equation is independent of the particular representative of $\lfloor \phi \rfloor$ used to define $\lfloor \gamma \rfloor$. Thus, since $\lfloor \psi \rfloor \in L^1(\Omega \times \Omega) $ was arbitrary, by the fundamental lemma of calculus of variations, the definition of $\lfloor\gamma \rfloor $ is independent of the particular representative of $\lfloor \phi \rfloor$ chosen to define $\lfloor \gamma \rfloor$. This implies that $\lfloor \gamma \rfloor $ is well-defined by \eqref{eq:extension2} as an equivalence class of functions on $\Omega \times \Omega$. It remains to show \eqref{eq:continuous_embedding}. To do this, we argue as follows: If \eqref{eq:continuous_embedding} does not hold, then there exists $\varepsilon > 0$ and $A \subset \Omega \times \Omega$ with $|A| > 0 $ such that for all $\gamma \in \lfloor \gamma \rfloor$
\begin{equation*}
| \gamma(x,y)| \geq \| \lfloor \phi \rfloor \|_{L^\infty((0,\diam))}+\varepsilon \quad \text{\ale on } A,
\end{equation*}
where $|A|$ denotes the $2N$-dimensional Lebesgue measure of $A$. It follows that
\begin{multline*}
(\|\lfloor  \phi  \rfloor \|_{L^\infty((0,\diam))}+\varepsilon) |A| \\ \leq \iint_{\Omega \times \Omega} |\gamma(x,y)| \chi_A(x,y) \dx\dy =   \int_\Omega \int_0^{\diam} |\phi(r)| \int_{\partial B(x,r) \cap \Omega}  \chi_A(x,y) \, \mathrm{d} S(y) \mathrm{d}r   \dx \\ \leq  \| \phi \|_{L^\infty((0,\diam))} \int_\Omega \int_0^{\diam}  \int_{\partial B(x,r) \cap \Omega}  \chi_A(x,y) \, \mathrm{d} S(y) \mathrm{d}r  \dx \leq \| \lfloor \phi \rfloor \|_{L^\infty((0,\diam))} |A|.
\end{multline*}
However, this can only be true if $|A| = 0$, which contradicts our assumption. 
\end{proof}

\begin{remark}
Note that in the proof of \Cref{prop:embedding_in_Linfty} we use a representation in polar coordinates. Representation in polar coordinates is often seen as a special case of the coarea formula. Unfortunately, the coarea formula as given \eg in \cite{LawrenceCraigEvans1991} can not be directly applied to $\gamma \psi$, since it has a requirement that $\gamma \psi \in L^1(\Omega \times \Omega)$, but this is exactly what we are proving in the first part of the proof. 
\end{remark}
It follows from similar arguments as in \Cref{prop:embedding_in_Linfty}, that $\gamma \in L^\infty(\Omega \times \Omega)$ defined as in \eqref{eq:extension2} satisfies \Cref{ass:kernelass1} for all $\sigma \in \Wad$. Thus, using \Cref{cor:equivalenceofnorms}, there exist constants $m,M >0$ such that for all $\sigma \in \Wad$ it holds that
\begin{equation*}
m \| u \|_{\Hs} \leq \| u \|_{V^{\sigma,s}(\Omega)} \leq M \| u \|_{\Hs}.
\end{equation*}

\begin{lemma}\label{lem:weakstarcontinuity}
The mapping 
\begin{align*}
\Phi \colon L^\infty((0,d)) &\to L^\infty(\Omega \times \Omega) \\
\phi &\mapsto \gamma, \text{ such that }\gamma(x,y)  = \phi(| y - x |) \text{ a.e. on }\Omega \times \Omega,
\end{align*}
is well-defined, linear, continuous, and  sequentially weak$^*$-to-weak$^*$ continuous.
\end{lemma}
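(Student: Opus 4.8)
The plan is to dispose of well-definedness, linearity and continuity by quoting \Cref{prop:embedding_in_Linfty}, and then to devote the real work to the weak$^*$ continuity. Indeed, \Cref{prop:embedding_in_Linfty} furnishes, for each $\phi\in L^\infty((0,d))$, the unique element $\gamma=\Phi(\phi)\in L^\infty(\Omega\times\Omega)$ satisfying \eqref{eq:extension2}, so $\Phi$ is well-defined, and the bound \eqref{eq:continuous_embedding} reads $\|\Phi(\phi)\|_{L^\infty(\Omega\times\Omega)}\le\|\phi\|_{L^\infty((0,d))}$, i.e.\ $\Phi$ is continuous with operator norm at most $1$. Linearity follows from the uniqueness statement, since for $\alpha,\beta\in\R$ the function $\alpha\Phi(\phi_1)+\beta\Phi(\phi_2)$ represents $\alpha\phi_1+\beta\phi_2$ in the sense of \eqref{eq:extension2}, hence coincides with $\Phi(\alpha\phi_1+\beta\phi_2)$.

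For the sequential weak$^*$-to-weak$^*$ continuity I would take $\phi^n\rightharpoonup^*\phi$ in $L^\infty((0,d))$, set $\gamma^n:=\Phi(\phi^n)$ and $\gamma:=\Phi(\phi)$, and aim to show $\iint_{\Omega\times\Omega}\gamma^n\psi\,\dx\,\dy\to\iint_{\Omega\times\Omega}\gamma\psi\,\dx\,\dy$ for every fixed $\psi\in L^1(\Omega\times\Omega)$. The idea is to reduce this pairing to the one-dimensional level: using the polar-coordinate representation from the proof of \Cref{prop:embedding_in_Linfty} together with one further application of Fubini's theorem — legitimate because $\Phi(\eta)\psi\in L^1(\Omega\times\Omega)$ for every $\eta\in L^\infty((0,d))$, again by \Cref{prop:embedding_in_Linfty} — one rewrites, for any such $\eta$,
\[
\iint_{\Omega\times\Omega}\Phi(\eta)(x,y)\,\psi(x,y)\,\dx\,\dy=\int_0^d\eta(r)\,g(r)\,\dd r,\qquad g(r):=\int_\Omega\int_{\partial B(x,r)\cap\Omega}\psi(x,y)\,\mathrm{d}S(y)\,\dx .
\]
The crucial observation is that $g$ depends only on $\psi$, not on $\eta$.

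Next I would verify that $g\in L^1((0,d))$; this is the one genuine estimate in the argument. Applying the polar-coordinate formula to the nonnegative measurable function $|\psi|$ (which requires no integrability assumption, cf.\ \cite[Theorems 2.39 and 2.49]{Folland1999realanalysis}) and using Tonelli's theorem to rearrange the iterated integrals gives
\[
\int_0^d|g(r)|\,\dd r\le\int_\Omega\int_0^d\int_{\partial B(x,r)\cap\Omega}|\psi(x,y)|\,\mathrm{d}S(y)\,\dd r\,\dx=\|\psi\|_{L^1(\Omega\times\Omega)}<\infty,
\]
the measurability of $g$ being part of Tonelli's theorem exactly as in \Cref{prop:embedding_in_Linfty}. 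With $g$ now a fixed element of $L^1((0,d))$, the very definition of weak$^*$ convergence $\phi^n\rightharpoonup^*\phi$ yields $\int_0^d\phi^n g\,\dd r\to\int_0^d\phi g\,\dd r$, that is $\iint_{\Omega\times\Omega}\gamma^n\psi\to\iint_{\Omega\times\Omega}\gamma\psi$; since $\psi\in L^1(\Omega\times\Omega)$ was arbitrary, this is precisely $\gamma^n\rightharpoonup^*\gamma$ in $L^\infty(\Omega\times\Omega)$. I expect the only mildly delicate point to be the bookkeeping around the Fubini/polar-coordinate manipulations and the measurability of $g$, all of which is of the same nature as the computations already carried out in \Cref{prop:embedding_in_Linfty}; no substantially new difficulty arises.
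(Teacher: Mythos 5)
Your proposal is correct, and it differs from the paper's argument in one worthwhile respect. Both proofs handle well-definedness, linearity and continuity by citing \Cref{prop:embedding_in_Linfty}, and both reduce the weak$^*$ statement to a pairing against an $L^1((0,d))$ function obtained from $\psi$ via polar coordinates. The difference is in how the $x$-variable is treated. The paper keeps $x$ fixed: it shows that for almost every $x$ the map $r\mapsto\int_{\partial B(x,r)\cap\Omega}\psi(x,y)\,\dS(y)$ lies in $L^1((0,d))$, applies weak$^*$ convergence of $(\phi^n)$ for each such $x$ to get $f^n(x)\to f(x)$ pointwise, and then passes to the limit in $\int_\Omega f^n(x)\,\dx$ by dominated convergence, using the bound $|f^n(x)|\le\|\phi^n\|_{L^\infty((0,d))}\int_\Omega|\psi(x,y)|\,\dy$ (which tacitly invokes the uniform boundedness of a weak$^*$-convergent sequence). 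You instead integrate out $x$ first, producing a single function $g\in L^1((0,d))$ with $\iint_{\Omega\times\Omega}\Phi(\eta)\psi=\int_0^d\eta\,g\,\dd r$ for every $\eta\in L^\infty((0,d))$, so that the conclusion is one direct application of the definition of weak$^*$ convergence; this dispenses with both the dominated convergence step and the boundedness of $(\phi^n)$. The price is the additional Fubini interchange of the $x$- and $r$-integrals needed to form $g$, which your Tonelli estimate $\int_0^d|g|\le\|\psi\|_{L^1(\Omega\times\Omega)}$ legitimately supports (together with the joint measurability of $(x,r)\mapsto\int_{\partial B(x,r)\cap\Omega}\psi(x,y)\,\dS(y)$, of the same nature as the manipulations already performed in \Cref{prop:embedding_in_Linfty}). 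Both routes are sound; yours is marginally more economical.
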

\begin{proof}
We have already seen in \Cref{prop:embedding_in_Linfty} that $\Phi$ is well-defined and continuous. Clearly, $\Phi$ is also linear. It remains to show weak$^*$-to-weak$^*$ sequential continuity. To do this, we let $(\phi^n)$ be a sequence in $L^\infty((0,\diam))$ and $\phi \in L^\infty((0,\diam))$ be such that
\begin{equation*}
\phi^n \rightharpoonup^* \phi \quad \text{in } L^\infty((0,\diam)). 
\end{equation*}
Let  $\gamma \in L^\infty(\Omega \times \Omega)$ and the sequence $(\gamma^n)$ in $L^\infty(\Omega \times \Omega)$ be defined by $\gamma(x,y) \coloneqq \phi(| x- y|)$ and $  \gamma^n(x,y) \coloneqq \phi^n(|x-y|)$, respectively, for almost all $(x,y) \in \Omega \times \Omega$ and all $n \in \N$. We must show that $\gamma^n \rightharpoonup^* \gamma \quad \text{in } L^\infty(\Omega \times \Omega)$. For this purpose, let $\psi \in L^1(\Omega \times \Omega)$ be arbitrary. Using \Cref{prop:embedding_in_Linfty} and Fubini's theorem, the integrals
\begin{equation*}
\int_\Omega |\gamma^n(x,y)| |\psi(x,y)| \dy \quad \text{and} \quad \int_\Omega |\psi(x,y)| \, \dy  
\end{equation*}
exist and are finite for almost all $x \in \Omega$ for all $n \in \N$ . Employing polar coordinates and Fubini's theorem, we obtain that for almost all $x \in \Omega$
\begin{multline*}
 \int_\Omega \gamma^n(x,y) \psi(x,y)\, \dy = \int_0^{\diam} \int_{\partial B(x,s) \cap \Omega }   \gamma^n(x,y) \psi(x,y) \mathrm{d} S(y) \, \mathrm{d} s \\ =\int_0^{\diam}  \phi^n(s) \int_{\partial B(x,s) \cap \Omega }   \psi(x,y) \mathrm{d}S(y) \, \mathrm{d} s,
\end{multline*}
and that the map
$
s \mapsto \int_{\partial B(x,s) \cap \Omega } \psi(x,y) \, \mathrm{d}S(y) 
$
 is in $L^1((0,\diam))$. Using that $\phi^n \rightharpoonup^* \phi$ in $L^\infty((0,\diam))$, we deduce that
\begin{equation}\label{eq:pointwiseconvergence}
 \lim_{n \to \infty} \int_\Omega \gamma^n(x,y) \psi(x,y) \dy \dx = \int_\Omega \gamma(x,y) \psi(x,y) \, \dy \dx
\end{equation}
for almost all $x \in \Omega$. We now define
\begin{equation*}
f^n(x) \coloneqq \int_\Omega \gamma^n(x,y) \psi(x,y) \, \dy \quad \text{and} \quad f(x) \coloneqq \int_\Omega \gamma(x,y) \psi(x,y)  \dy . 
\end{equation*}
\Cref{eq:pointwiseconvergence} shows that $f^n(x) \to f(x)$ \ale on $\Omega$. Since moreover
\begin{equation*}
|f^n (x)| \leq \|\phi^n \|_{L^\infty((0,\diam))}  \int_{\Omega} |\psi(x,y) |\, dx \dy \leq C  \int_{\Omega} |\psi(x,y) |\, dx \dy  \quad 
\end{equation*}
and $x \mapsto \int_\Omega | \psi(x,y) | \dy$ is integrable, Lebesgue's dominated convergence theorem asserts that
\begin{equation*}
\lim_{n \to \infty} \iint_{\Omega \times \Omega} \gamma^n (x,y) \psi(x,y) \, \dx \dy = \iint_{\Omega \times \Omega} \gamma (x,y) \psi(x,y) \, \dx \dy.
\end{equation*} 
Since $\psi \in L^1(\Omega \times \Omega)$ was chosen arbitrarily, this is precisely what we needed to show.
\end{proof}
\begin{remark} It can be of interest to take a note of the interpretation of nonlocal energy seminorms for $\Omega = \R$ using Fourier analysis. Here, $\mathscr{F} \colon L^2(\R) \to L^2(\R)$ denotes the usual Fourier transform, see \eg \cite[Section 4.12] {BrediesLorenz2010_Math}. Similarly as in \cite[Proposition 3.4]{Nezza2012}, for $u \in C_c(\R)$ we compute
\begin{multline*}
| u |_{\sigma,s}^2 {=}  \int_\R \left\| \mathscr{F}\left(\frac{|u(z+\cdot) - u(\cdot)|}{|z|^{1/2+s}} \sigma(|z|)^{1/2} \right) \right\|_{L^2(\R)}^2 \dz   \\= \int_\R \left(\int_\R \frac{|e^{i \xi z} - 1|^2}{|z|^{1+2s}} \sigma(|z|) |\mathscr{F}(u(\xi))|^2 \dxi \right) \dz  \\ \stackrel{v=\xi z}{=} \int_\R \left(\int_\R \frac{|e^{i v} - 1|^2}{|v|^{1+2s}} \sigma(|v/\xi|) |\xi|^{2s}|\mathscr{F}(u(\xi))|^2 \dxi \right) \dv.
\end{multline*}
Here we use Plancherel's theorem (see \cite[Satz 4.16]{BrediesLorenz2010_Math}) and the fact that translation in the time domain corresponds to modulation in the frequency domain (see \cite[Lemma 4.5]{BrediesLorenz2010_Math}) for the first and second equality, respectively. If we let
\begin{equation*}
\rho(\xi) \coloneqq \int_\R  \frac{|e^{iv}-1|^2}{|v|^{1+2s}} \sigma(v/\xi)) \dv,
\end{equation*}
then it follows that
\begin{equation*}
|u|_{\sigma,s}^2 = \int_\R \rho(\xi) |\xi|^{2s} |\mathscr{F}(u(\xi))|^2  \dxi.
\end{equation*}
This shows that nonlocal energy seminorms behave similar as the fractional order Sobolev-Slobodeckij seminorm (see \cite{Nezza2012}). The only difference is an additional weighting term $\rho$ depending on the frequency.
\end{remark}

\section{Linear case}
\subsection{Problem setting}
For $\sigma \in \Wad$, we consider the lower level problems
\begin{equation}\label{prob:optcontrol1}\tag{\mbox{$P(\sigma)$}}
\min_{ u \in  \Hs} \| S u - \ydelta \|_{\Ltwo}^2 + |u|_{\sigma, s}^2, 
\end{equation}
where $S \in \Lin(L^2(\Omega))$, and $\ydelta \in \Ltwo$ is a noisy measurement of the ground truth state.  The solution set of the lower level problems is denoted by
\begin{equation*}
\m{F} \coloneqq \left\{ (\sigma, u) \in \Wad \times \Hs \mid u \text{ solves } \eqref{prob:optcontrol1} \right\}.
\end{equation*}
We address the following learning problem
\begin{equation}\label{prob:bilevelsimple}\tag{BP}
\min_{\sigma \in \Wad, u \in H^{s}(\Omega)} \frac{1}{2} \| u - \uex \|_{\Ltwo}^2 + R(\sigma) \quad  \text{subject to} \quad (\sigma,u) \in \m{F},
\end{equation}
where $\uex \in \Ltwo$ is the ground truth control and $R \colon L^\infty((0,\diam)) \to [0,\infty)$ is a given regularization operator. We emphasize that when showing existence of solutions we specifically include the case $R \equiv 0$, \ie we do not require additional regularization of the weights.

\begin{example}\label{ex:poissoneq}
As an example let $S$ be the solution operator to an elliptic PDE with Neumann boundary conditions. More precisely, for $(y,u) \in \Ltwo \times \Ltwo$ let
\begin{equation*}
S u = y \quad \text{if and only if} \quad -\rho \Delta y +y = u \text{ and } \frac{\partial y }{\partial n}=0 \text{ on } \partial \Omega,
\end{equation*}
where $\rho$ is a positive constant. Clearly, $\rg{(S)} \subset \He$. As a straightforward consequence of the Lax-Milgram lemma and the standard Sobolev embedding, it follows that $S$ is a compact operator on $\Ltwo$. Moreover, it is easy to see that $S$ is injective. 
\end{example}

\subsection{Preliminaries}

\begin{proposition}[Uniform convexity]\label{prop:coercivity}
Let $S$ be injective on $\Pi_0(\Omega)$. Then there exist $c,C > 0$ such that for every $\sigma \in \Wad$  and every $u \in \Hs$ we have
\begin{equation}
c \| u \|_{\Hs}^2 \leq \| S u \|_{\Ltwo}^2 + | u |_{\sigma,s}^2 \leq C \| u \|_{\Hs}^2.
\end{equation}
\end{proposition}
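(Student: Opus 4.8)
The upper bound is immediate: by \Cref{lem:hsnormbounded} (the first inequality in \eqref{eq:hs_semi_norm_bounded}) we have $|u|_{\sigma,s}^2 \leq \gamma_2 |u|_{\Hs}^2$, and since $S$ is bounded on $\Ltwo$, $\|Su\|_{\Ltwo}^2 \leq \|S\|^2 \|u\|_{\Ltwo}^2$. Adding these gives $\|Su\|_{\Ltwo}^2 + |u|_{\sigma,s}^2 \leq C\|u\|_{\Hs}^2$ with $C = \max(\|S\|^2, \gamma_2)$ (up to the obvious splitting of the $\Hs$-norm into its $\Ltwo$- and seminorm parts). This constant is clearly independent of $\sigma$.

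The lower bound is the substantive part, and the plan is a compactness/contradiction argument analogous to the proof of \Cref{lem:wirtinger}. First I would reduce to the worst case weight: by the same monotonicity observation used in \Cref{lem:wirtinger}, $|u|_{\sigma_{\min},s} \leq |u|_{\sigma,s}$ for every $\sigma \in \Wad$, where $\sigma_{\min}$ corresponds to the weight $\gamma_1 \chi_{\{|x-y|\geq\delta\}}$ — wait, more carefully, one uses $\gamma_1 \chi_{\{|x-y|\leq\delta\}}$, the smallest admissible kernel — so it suffices to bound $\|u\|_{\Hs}^2$ from below by $\|Su\|_{\Ltwo}^2 + |u|_{\sigma_{\min},s}^2$ for this one fixed weight, and the uniformity over $\Wad$ comes for free. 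Suppose the lower bound fails for this fixed weight; then there is a sequence $(u^n)$ in $\Hs$ with $\|u^n\|_{\Hs} = 1$ and $\|Su^n\|_{\Ltwo}^2 + |u^n|_{\sigma_{\min},s}^2 \to 0$. Since $(u^n)$ is bounded in the reflexive space $\Hs$, pass to a subsequence with $u^n \rightharpoonup u$ in $\Hs$ and, by compactness of the embedding $\Hs \hookrightarrow \Ltwo$ (\cite[Theorem 1.4.3.2]{Grisvard1986}), $u^n \to u$ strongly in $\Ltwo$. By weak lower semicontinuity of the seminorm $|\cdot|_{\sigma_{\min},s}$ we get $|u|_{\sigma_{\min},s} = 0$, and by continuity of $S$ on $\Ltwo$ we get $Su = 0$ (the convergence $u^n \to u$ in $\Ltwo$ gives $Su^n \to Su$). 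Now \Cref{lem:kernel_of_seminorm} (applied to the weight $\sigma_{\min}$, which satisfies \Cref{ass:kernelass1}) yields $u \in \Pi^0(\Omega)$, i.e. $u$ is constant; and since $S$ is injective on $\Pi_0(\Omega)$ and $Su = 0 = S0$, we conclude $u = 0$.

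Finally I derive the contradiction. Since $u^n \to u = 0$ strongly in $\Ltwo$, we have $\|u^n\|_{\Ltwo} \to 0$; combined with $|u^n|_{\sigma_{\min},s}^2 \to 0$ this forces $\|u^n\|_{V^{\sigma_{\min},s}(\Omega)}^2 = \|u^n\|_{\Ltwo}^2 + |u^n|_{\sigma_{\min},s}^2 \to 0$, hence $\|u^n\|_{\Hs} \to 0$ by the norm equivalence of \Cref{cor:equivalenceofnorms}, contradicting $\|u^n\|_{\Hs} = 1$. This completes the argument.

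The only delicate point — and the main thing to get right — is the reduction to a single fixed weight $\sigma_{\min}$, which is what makes the constants uniform over the whole admissible class; once that reduction is in place the rest is the standard Rellich-type compactness argument. One should double-check that $\sigma_{\min}$ (equivalently the kernel $\gamma_1\chi_{\{|x-y|\leq\delta\}}$) indeed lies in $\Wad$ and satisfies \Cref{ass:kernelass1}, so that \Cref{cor:equivalenceofnorms}, \Cref{lem:kernel_of_seminorm}, and the compact embedding are all applicable to it; this is immediate from the definitions.
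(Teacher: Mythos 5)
Your proof is correct and follows essentially the same route as the paper: reduction to the fixed worst-case weight $\sigma_{\min}=\gamma_1\chi_{(0,\delta)}$ followed by a compactness/contradiction argument using the compact embedding $\Hs\hookrightarrow\Ltwo$, \Cref{lem:kernel_of_seminorm}, and the injectivity of $S$ on $\Pi_0(\Omega)$. The only cosmetic difference is that you normalize $\|u^n\|_{\Hs}=1$ and contradict directly via \Cref{cor:equivalenceofnorms}, whereas the paper normalizes $\|u^n\|_{\Ltwo}=1$ to first obtain an $L^2$ lower bound and then upgrades it with the second inequality of \Cref{lem:hsnormbounded}; both are valid.
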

\begin{proof}
The second inequality follows from the continuity of $S$ on $\Ltwo$ and \Cref{cor:equivalenceofnorms}. To prove the first inequality, first note that for all $\sigma \in \Wad$ it holds that
\begin{equation*}
| u |_{\sigma_{\min},s} \leq | u |_{\sigma,s} \quad \text{for all } u \in \Hs,
\end{equation*}
where $\sigma_{\min} \coloneqq \gamma_1 \chi_{(0,\delta)}$. Thus, it suffices to prove the first inequality for $\sigma = \sigma_{\min}$. We begin by showing that there exists $c_1 > 0$ such that
\begin{equation}\label{eq:L2normbound}
c_1 \| u \|_{\Ltwo}^2 \leq \| S u \|_{\Ltwo}^2 + | u |_{\sigma_{\min},s}^2 \quad \text{for all } u \in \Hs.
\end{equation}
To do this, we argue by contradiction. If there is no $c_1>$0 such that \eqref{eq:L2normbound} holds, then there exists a sequence $(\un)$ in $\Hs$ such that
\begin{equation*}
\| \un \|_\Ltwo = 1,  \quad \text{and} \quad \| S\un \|_\Ltwo^2 + |\un |_{\sigma_{\min},s}^2 \leq 1/n \quad \text{ for all } n \in \N.
\end{equation*}
Using \Cref{cor:equivalenceofnorms}, we deduce that $(\un)$ is bounded in $\Hs$. Since $\Hs$ is reflexive, it follows that $(u^n)$ has an accumulation point with respect to the weak topology on $\Hs$. Moreover, $\Hs$ is compactly embedded in $\Ltwo$, and consequently it follows that $u$ is also an accumulation point of $(u^n)$ with respect to the strong topology on $\Ltwo$. Standard arguments show that 
\begin{equation*}
\|u\|_\Ltwo = 1, \quad u \in \ker | \cdot |_{\sigma_{\min},s} = \Pi_0(\Omega) \quad \text{and} \quad S u = 0,
\end{equation*}
This contradicts the assumption that $S$ is injective on $\Pi_0(\Omega)$. Hence we have proven that there is $c_1>0$ such that \eqref{eq:L2normbound} holds. Since by \Cref{lem:hsnormbounded} we already know that
\begin{equation*}
|u|^2_{\Hs} \leq \gamma_1 |u|_{\sigma_{\min},s}^2 + 4 | \Omega | \delta^{-N-2s} \|u\|^2_{\Ltwo},
\end{equation*}
the claim follows by straightforward computations.
\end{proof}

\begin{proposition}\label{prop:exsolllop}
Assume that $S$ is injective on $\Pi_0(\Omega)$. Then \eqref{prob:optcontrol1} has a unique solution. 
\end{proposition}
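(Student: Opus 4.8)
The plan is to use the direct method of the calculus of variations together with strict convexity. First I would observe that by \Cref{prop:coercivity} the objective functional
\[
J_\sigma(u) \coloneqq \| S u - \ydelta \|_{\Ltwo}^2 + | u |_{\sigma,s}^2
\]
is coercive on $\Hs$: since $| u |_{\sigma,s}^2 \geq 0$ and $\| S u - \ydelta \|_{\Ltwo}^2 \geq \tfrac12 \| S u \|_{\Ltwo}^2 - \| \ydelta \|_{\Ltwo}^2$, a short computation combined with the lower bound $c \| u \|_{\Hs}^2 \leq \| S u \|_{\Ltwo}^2 + | u |_{\sigma,s}^2$ shows $J_\sigma(u) \to \infty$ as $\| u \|_{\Hs} \to \infty$. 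Hence any minimizing sequence $(u^n)$ is bounded in $\Hs$, and since $\Hs$ is reflexive it has a subsequence converging weakly to some $u \in \Hs$.

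Next I would establish weak lower semicontinuity of $J_\sigma$ along this subsequence. The map $u \mapsto \| S u - \ydelta \|_{\Ltwo}^2$ is continuous and convex on $\Ltwo$, hence weakly lower semicontinuous; composing with the (weak-to-weak continuous, since bounded linear) inclusion $\Hs \hookrightarrow \Ltwo$ gives weak lower semicontinuity on $\Hs$. For the seminorm term, $u \mapsto | u |_{\sigma,s}^2$ is a continuous seminorm squared arising from the positive semidefinite bilinear form $\langle \cdot,\cdot\rangle_{\sigma,s}$, hence convex and strongly continuous on $\Hs$, therefore weakly lower semicontinuous. Adding the two, $J_\sigma$ is weakly lower semicontinuous, so $J_\sigma(u) \leq \liminf_n J_\sigma(u^n) = \inf_{\Hs} J_\sigma$, which proves $u$ is a minimizer.

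For uniqueness I would argue by strict convexity. The functional $J_\sigma$ is convex as a sum of two convex terms. To see it is strictly convex, suppose $J_\sigma\big(\tfrac{u_1+u_2}{2}\big) = \tfrac12 J_\sigma(u_1) + \tfrac12 J_\sigma(u_2)$ for minimizers $u_1,u_2$. Then equality must hold in the convexity estimate for each term separately; for the term $\| S \cdot - \ydelta \|_{\Ltwo}^2$ this forces $S u_1 = S u_2$, and for $| \cdot |_{\sigma,s}^2$ it forces $| u_1 - u_2 |_{\sigma,s} = 0$, i.e. $u_1 - u_2 \in \Pi^0(\Omega)$ by \Cref{lem:kernel_of_seminorm}. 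Combining, $u_1 - u_2 \in \Pi^0(\Omega) \cap \ker S$, and since $S$ is injective on $\Pi_0(\Omega)$ this gives $u_1 = u_2$. The main obstacle is really just the bookkeeping in the strict-convexity step — one has to carefully exploit that equality in $\tfrac12 f(a) + \tfrac12 f(b) \geq f\big(\tfrac{a+b}{2}\big)$ for a strictly convex scalar function composed with affine maps degenerates exactly on the kernels of those maps — but this is standard and the coercivity estimate of \Cref{prop:coercivity} does all the heavy lifting for existence.
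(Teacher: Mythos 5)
Your proof is correct, but it takes a different route from the paper. The paper exploits convexity of \eqref{prob:optcontrol1} to reduce the problem to its first-order optimality condition
\begin{equation*}
\langle S u^*, S v \rangle_{\Ltwo}  + \langle u^*, v \rangle_{\sigma,s} = \langle \ydelta, S v  \rangle_{\Ltwo} \quad \text{for all } v \in \Hs,
\end{equation*}
and then applies the Lax--Milgram lemma to the bilinear form $a(u,v) = \langle Su, Sv\rangle_{\Ltwo} + \langle u,v\rangle_{\sigma,s}$, whose coercivity on $\Hs$ is exactly the lower bound in \Cref{prop:coercivity}; existence and uniqueness then come in one stroke. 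You instead run the direct method (coercivity, reflexivity, weak lower semicontinuity of a convex continuous functional) for existence, and a separate strict-convexity argument for uniqueness, identifying the degeneracy set as $\Pi^0(\Omega)\cap\ker S$ via \Cref{lem:kernel_of_seminorm} and then invoking the injectivity hypothesis. Both arguments lean on \Cref{prop:coercivity} as the essential ingredient. Your approach is more elementary and does not need the equivalence with the variational equation, at the cost of some bookkeeping; note that your uniqueness step could be shortened, since for two minimizers $u_1,u_2$ the quadratic defect identity gives $\tfrac14\bigl(\|S(u_1-u_2)\|_{\Ltwo}^2 + |u_1-u_2|_{\sigma,s}^2\bigr)=0$, and the left-hand side already dominates $\tfrac{c}{4}\|u_1-u_2\|_{\Hs}^2$ by \Cref{prop:coercivity}, so one need not pass through $\Pi^0(\Omega)$ at all.
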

\begin{proof}
Since \eqref{prob:optcontrol1} is a convex minimization problem, it follows that $u^* \in \Hs$ solves \eqref{prob:optcontrol1} if and only if $u^*$ satisfies the first order optimality condition 
\begin{equation}\label{eq:optcondLLPprop}
\langle S u^*, S v \rangle_\Ltwo  + \langle u^*, v \rangle_{\sigma,s} = \langle \ydelta, S v  \rangle_\Ltwo \quad \text{for all } v \in \Hs.
\end{equation}
Existence and uniqueness of solutions to \eqref{eq:optcondLLPprop} can be easily proven using the Lax-Milgram lemma (the required coercivity is a direct consequence of  
\Cref{prop:coercivity}).
\end{proof}

\paragraph{Optimality conditions for the lower level problem}
To simplify notation, we define $\m{L}^s(\sigma) \colon \Hs \to \Hs\dual$  by
\begin{multline*}
[\m{L}^s(\sigma) u] v = \iint_{\Omega \times \Omega} \frac{(u(x)-u(y))(v(x)-v(y))}{|x-y|^{N+2s}} \sigma(|x-y|) \, \dx\dy \\ \quad \text{for } (u,v) \in \Hs \times \Hs.
\end{multline*}
Note that $\m{L}^s(\sigma)$ is a bounded operator from $\Hs$ to $\Hs\dual$. Since \eqref{prob:optcontrol1} is convex, we obtain the following necessary and sufficient optimality condition for \eqref{prob:optcontrol1}.

\begin{proposition}\label{prop:opt_sol_llp}
An element  $u \in \Hs$ solves \eqref{prob:optcontrol1} if and only if 
\begin{equation*}
S^* S u -S^* \ydelta+ \m{L}^s(\sigma) u = 0  \quad \text{in } \Hs\dual.
\end{equation*}
\end{proposition}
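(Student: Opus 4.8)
The plan is to observe that the objective functional
\[
J_\sigma(u) \coloneqq \| S u - \ydelta \|_{\Ltwo}^2 + | u |_{\sigma, s}^2
\]
is convex and G\^ateaux differentiable on $\Hs$, so that $u \in \Hs$ solves \eqref{prob:optcontrol1} if and only if the derivative of $J_\sigma$ at $u$ vanishes, and then to identify this derivative with the asserted expression. Since the lower level problem is convex, this stationarity characterization of minimizers is precisely the one already exploited in the proof of \Cref{prop:exsolllop}; the present statement is therefore just a reformulation of that variational identity as an operator equation in $\Hs\dual$.

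Concretely, first I would record convexity: $u \mapsto \| S u - \ydelta \|_{\Ltwo}^2$ is convex, being the composition of the affine map $u \mapsto S u - \ydelta$ (well defined because $\Hs \hookrightarrow \Ltwo$ and $S \in \Lin(\Ltwo)$) with the convex map $\|\cdot\|_{\Ltwo}^2$, while $u \mapsto |u|_{\sigma,s}^2$ is the square of a seminorm, hence convex; the sum is convex. For differentiability I would use that $\langle \cdot, \cdot \rangle_{\sigma,s}$ is a bounded symmetric bilinear form on $\Hs$ (boundedness by \Cref{lem:hsnormbounded}, equivalently \Cref{cor:equivalenceofnorms}) and that $(u,v) \mapsto \langle S u, S v \rangle_{\Ltwo}$ is likewise bounded and symmetric on $\Hs$, so that for fixed $u, v \in \Hs$ the map $t \mapsto J_\sigma(u + t v)$ is a quadratic polynomial in $t$ whose derivative at $t = 0$ equals $2 \langle S u - \ydelta, S v \rangle_{\Ltwo} + 2 \langle u, v \rangle_{\sigma,s}$. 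Hence $u$ minimizes $J_\sigma$ over $\Hs$ if and only if
\[
\langle S u - \ydelta, S v \rangle_{\Ltwo} + \langle u, v \rangle_{\sigma,s} = 0 \qquad \text{for all } v \in \Hs .
\]

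It then remains to rewrite this identity as an equation in $\Hs\dual$. Using the Hilbert-space adjoint $S^* \in \Lin(\Ltwo)$ of $S$ one has $\langle S u - \ydelta, S v \rangle_{\Ltwo} = \langle S^* S u - S^* \ydelta, v \rangle_{\Ltwo}$, and through the continuous dense embedding $\Hs \hookrightarrow \Ltwo$ we identify $S^* S u - S^* \ydelta \in \Ltwo$ with the element of $\Hs\dual$ acting on $v \in \Hs$ exactly by this $L^2$ pairing (the Gelfand-triple identification $\Ltwo \cong \Ltwo\dual \hookrightarrow \Hs\dual$). Since by definition $\langle u, v \rangle_{\sigma,s} = [\m{L}^s(\sigma) u] v$ with $\m{L}^s(\sigma) u \in \Hs\dual$, the optimality condition reads $[S^* S u - S^* \ydelta + \m{L}^s(\sigma) u] v = 0$ for all $v \in \Hs$, i.e. $S^* S u - S^* \ydelta + \m{L}^s(\sigma) u = 0$ in $\Hs\dual$, which is the claim.

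The argument is essentially bookkeeping and I do not anticipate a genuine obstacle; the only point deserving a little care is the identification of $L^2$-functions with elements of $\Hs\dual$ — one must interpret ``$=0$ in $\Hs\dual$'' via the canonical embedding $\Ltwo \hookrightarrow \Hs\dual$ rather than via the Riesz isomorphism of $\Hs$, so that testing the equation against all $v \in \Hs$ reproduces precisely the variational equality above and nothing is lost in passing between the two formulations.
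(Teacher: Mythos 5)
Your proposal is correct and follows essentially the same route as the paper, which simply asserts that the claim follows from convexity of \eqref{prob:optcontrol1} and the variational identity already recorded in the proof of \Cref{prop:exsolllop}. Your write-up merely fills in the standard details (Gâteaux differentiability of the quadratic functional and the identification of $S^*Su - S^*\ydelta \in \Ltwo$ with an element of $\Hs\dual$ via the canonical embedding), all of which are sound.
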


\begin{remark}
An optimality system for \eqref{prob:optcontrol1} with  $S$ as in \Cref{ex:poissoneq} can be obtained by standard Lagrangian methods. Indeed, let $u \in \Hs$ be a solution to \eqref{prob:optcontrol1} with $S$ as in \Cref{ex:poissoneq}. Then there exists $p \in \Hen$ such that
\begin{subequations}
\begin{align}
[\m{L}^s(\sigma)u] w + \int_\Omega p(x) w(x) \, \dx &= 0, \quad \text{for all }  w  \in \Hs \tag{optimality},\\
\int_{\Omega} [y(x)-\ydelta(x)]v(x) + \grad p(x) \cdot\grad v(x) \, \dx &= 0, \quad \text{for all }  v  \in \Hen, \tag{adjoint eq.}\\
\int_\Omega \grad y(x) \cdot \grad z(x) - u(x) z(x) \, \dx &=0, \quad \text{for all } z  \in \Hen. \tag{state eq.}
\end{align}
\end{subequations}
\end{remark}

\subsection{Existence of solutions}

To prove that \eqref{prob:bilevelsimple} has a solution, we apply the direct method of the calculus of variations. The crucial step in the proof is the argument proving that the feasible set is sequentially closed with respect to weak$^*$ convergence. Since the feasible set is defined by the lower level problem, this is related to stability of the lower level problem with respect to the weight function.

\begin{proposition}
Assume that $S$ is injective on $\Pi_0(\Omega)$ and that $R$ is weak$^*$ sequentially lower semi-continuous. Then \eqref{prob:bilevelsimple} has a solution.
\end{proposition}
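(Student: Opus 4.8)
The plan is to apply the direct method of the calculus of variations. First I would take a minimizing sequence $(\sigma^n, u^n)_n$ in the feasible set $\m{F}$ for the objective $\tfrac12 \|u - \uex\|_{\Ltwo}^2 + R(\sigma)$; since $R \geq 0$ and the fidelity term is bounded along the sequence, the values $\tfrac12\|u^n - \uex\|_{\Ltwo}^2 + R(\sigma^n)$ are bounded. The weights $\sigma^n$ all lie in $\Wad$, which is a bounded subset of $L^\infty((0,d))$ (values trapped in $[0,\gamma_2]$), so by sequential Banach--Alaoglu we may extract a subsequence with $\sigma^n \rightharpoonup^* \sigma$ in $L^\infty((0,d))$; the pointwise a.e.\ bounds $0 \le \sigma \le \gamma_2$ and $\gamma_1 \le \sigma$ on $(0,\delta)$ pass to the weak$^*$ limit (these are closed convex constraints), so $\sigma \in \Wad$. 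Next I need boundedness of $(u^n)$ in $\Hs$: since $u^n$ solves $P(\sigma^n)$, it has objective value no larger than that of $u=0$, i.e.\ $\|S u^n - \ydelta\|_{\Ltwo}^2 + |u^n|_{\sigma^n,s}^2 \le \|\ydelta\|_{\Ltwo}^2$; combined with \Cref{prop:coercivity} (uniform convexity, using that $S$ is injective on $\Pi_0(\Omega)$), this gives $c\|u^n\|_{\Hs}^2 \le \|Su^n\|_{\Ltwo}^2 + |u^n|_{\sigma^n,s}^2 \le 2\|\ydelta\|_{\Ltwo}^2 + 2\|S\|^2 \|u^n\|_{\Ltwo}^2$, but I should be slightly more careful and instead bound directly: $\|S u^n\|_{\Ltwo} \le \|S u^n - \ydelta\|_{\Ltwo} + \|\ydelta\|_{\Ltwo} \le 2\|\ydelta\|_{\Ltwo}$, hence $c\|u^n\|_{\Hs}^2 \le 4\|\ydelta\|_{\Ltwo}^2 + \|\ydelta\|_{\Ltwo}^2$ — wait, the coercivity estimate as stated uses $\|Su\|^2 + |u|^2$, so I get $c\|u^n\|_{\Hs}^2 \le \|S u^n\|_{\Ltwo}^2 + |u^n|_{\sigma^n,s}^2 \le 4\|\ydelta\|_{\Ltwo}^2 + \|\ydelta\|_{\Ltwo}^2 = 5\|\ydelta\|_{\Ltwo}^2$. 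So $(u^n)$ is bounded in $\Hs$, which is reflexive, so (after a further subsequence) $u^n \rightharpoonup u$ in $\Hs$ and, by compact embedding of $\Hs$ into $\Ltwo$, $u^n \to u$ strongly in $\Ltwo$.

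The main obstacle is showing that the limit pair $(\sigma, u)$ is feasible, i.e.\ that $u$ solves $P(\sigma)$. Here I would use \Cref{prop:opt_sol_llp}: $u^n$ solves $P(\sigma^n)$ iff $S^*Su^n - S^*\ydelta + \m{L}^s(\sigma^n)u^n = 0$ in $\Hs\dual$, equivalently $\langle u^n, v\rangle_{\sigma^n,s} = \langle \ydelta - S u^n, S v\rangle_{\Ltwo}$ for all $v \in \Hs$. I want to pass to the limit and conclude $\langle u, v\rangle_{\sigma,s} = \langle \ydelta - Su, Sv\rangle_{\Ltwo}$ for all $v$, which by \Cref{prop:opt_sol_llp} again means $u$ solves $P(\sigma)$. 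To handle the bilinear term $\langle u^n, v\rangle_{\sigma^n,s}$ I invoke \Cref{lem:stability}: set $p^n := S^*(\ydelta - S u^n) \in \Ltwo$ (taking $p=q=2$, for which $\Hs \hookrightarrow \hookrightarrow L^2(\Omega)$ holds by the compact Sobolev embedding, so the hypothesis of the lemma is satisfied); then \eqref{eq:stability_assumption} holds with these choices. Since $u^n \to u$ strongly in $\Ltwo$ and $S$ is continuous, $S u^n \to Su$ strongly, hence $p^n = S^*(\ydelta - Su^n) \to S^*(\ydelta - Su) =: p$ strongly, in particular weakly, in $\Ltwo$; also $\sigma^n \rightharpoonup^* \sigma$ in $L^\infty((0,d))$ gives, via \Cref{lem:weakstarcontinuity}, weak$^*$ convergence of the associated kernels in $L^\infty(\Omega\times\Omega)$, as required by \Cref{lem:stability}. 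The lemma then yields $\langle u, v\rangle_{\sigma,s} = \langle p, v\rangle_{\Ltwo} = \langle \ydelta - Su, Sv\rangle_{\Ltwo}$ for all $v \in \Hs$, so $(\sigma, u) \in \m{F}$.

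Finally I verify that $(\sigma, u)$ is a minimizer. By strong $\Ltwo$ convergence, $\|u^n - \uex\|_{\Ltwo}^2 \to \|u - \uex\|_{\Ltwo}^2$; by the assumed weak$^*$ sequential lower semicontinuity of $R$ together with $\sigma^n \rightharpoonup^* \sigma$, we get $R(\sigma) \le \liminf_n R(\sigma^n)$. Hence
\begin{equation*}
\tfrac12 \|u - \uex\|_{\Ltwo}^2 + R(\sigma) \le \liminf_{n\to\infty} \left( \tfrac12 \|u^n - \uex\|_{\Ltwo}^2 + R(\sigma^n)\right) = \inf_{(\tilde\sigma,\tilde u)\in\m{F}} \tfrac12\|\tilde u - \uex\|_{\Ltwo}^2 + R(\tilde\sigma),
\end{equation*}
and since $(\sigma,u)$ is feasible, it attains the infimum, so it solves \eqref{prob:bilevelsimple}. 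The one technical point worth a sentence is that $\m{F}$ is nonempty, which follows from \Cref{prop:exsolllop} (for each $\sigma\in\Wad$, $P(\sigma)$ has a unique solution), so the infimum is over a nonempty set and the minimizing sequence exists.
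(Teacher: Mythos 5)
Your proof is correct, and it follows the paper's overall strategy (direct method, weak$^*$ compactness of $\Wad$, coercivity from \Cref{prop:coercivity} for boundedness of $(u^n)$, and \Cref{lem:stability} combined with \Cref{lem:weakstarcontinuity} for the limit passage). The one place where you genuinely diverge is the feasibility of the limit pair, which is the crux of the argument. The paper uses the \emph{second} conclusion of \Cref{lem:stability}, namely the energy convergence $|u^n|_{\sigma^n,s}^2 \to |u^*|_{\sigma^*,s}^2$, and then compares objective values: weak lower semicontinuity of $u \mapsto \|Su-\ydelta\|_\Ltwo^2$ plus the minimality of each $u^n$ for $P(\sigma^n)$ gives $\|Su^*-\ydelta\|_\Ltwo^2 + |u^*|_{\sigma^*,s}^2 \leq \|Su-\ydelta\|_\Ltwo^2 + |u|_{\sigma^*,s}^2$ for every fixed competitor $u$ (this also needs $|u|_{\sigma^n,s}^2 \to |u|_{\sigma^*,s}^2$ for fixed $u$, which follows from the weak$^*$ convergence of the kernels). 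You instead use the \emph{first} conclusion of \Cref{lem:stability} — the limit equation — together with the characterization of solutions to the convex lower level problem in \Cref{prop:opt_sol_llp}: passing to the limit in $\langle u^n, v\rangle_{\sigma^n,s} = \langle \ydelta - Su^n, Sv\rangle_\Ltwo$ directly yields the first-order condition for $u$ at $\sigma$. Both routes are legitimate; yours is arguably a little cleaner here because it exploits the strong $\Ltwo$ convergence of $u^n$ (so $p^n$ converges strongly, not just weakly) and avoids the comparison with arbitrary competitors, while the paper's variational-inequality argument is the one that generalizes to nonconvex lower level problems where first-order conditions no longer characterize minimizers. Your bookkeeping on the coercivity bound and on the weak$^*$ closedness of the constraints defining $\Wad$ is also correct.
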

\begin{proof}
First of all note that as a consequence of \Cref{prop:exsolllop} the feasible set $\mF$ is nonempty. Thus, we can take a minimizing sequence for \eqref{prob:bilevelsimple}, \ie a sequence $(\sigman, \un) \in \mF$ such that 
\begin{equation*}
\lim_{n \to \infty} \frac{1}{2} \| \un - \uex \|_{\Ltwo}^2 + R(\sigman) = \inf_{(u,\sigma) \in \Fad}  \frac{1}{2} \|u - \uex\|_{\Ltwo}^2 + R(\sigma).
\end{equation*}
It is easy to prove that $\Wad$ is sequentially weak$^*$ compact. Consequently, $(\sigman)$ has a subsequence, again denoted by $(\sigman)$, such that $\sigman \rightharpoonup^* \sigma$ in $L^\infty((0,d))$ for some $\sigma \in \Wad$. \Cref{lem:weakstarcontinuity} ensures that in this case $\sigma^n(x,y) \coloneqq \sigman(|x-y|)$ converges to $\sigma^*(x,y)\coloneqq \sigma^*(|x-y|)$ with respect to the weak$^*$ topology on $L^\infty(\Omega \times \Omega)$. Using that
\begin{equation*}
\| S \un - \ydelta \|_\Ltwo^2 + | \un |_{\sigma^n,s}^2 \leq   \| \ydelta \|_\Ltwo \quad \text{for all } n \in \N,
\end{equation*}
it follows from \Cref{prop:coercivity} that $(\un)$ is bounded in $H^{s}(\Omega)$. Since $\Hs$ is reflexive, this implies that $(\un)$ has a subsequence, which we again denote by $(\un)$, such that $\un \rightharpoonup u^\ast$ in $\Hs$ for some $u^\ast \in H^{s}(\Omega)$. Note that if we set
\begin{equation*}
\pn \coloneqq S^* S \un - S^* \ydelta \quad \text{for all } n \in \N,
\end{equation*}
then $(\pn)$ is a bounded sequence in $\Ltwo$, and thus it has a weakly converging subsequence, which we again denote by $(\pn)$, such that $\pn \rightharpoonup p$ in $\Ltwo$. As a consequence of \Cref{lem:stability} we now obtain
$|u^*|_{\sigma^*,s}^2  = \lim_{n \to \infty} |u^n|_{\sigma^n, s}^2$
From this, and using that $\un$ solves the lower level problem \eqref{prob:optcontrol1}, with $\sigman$ in place of $\sigma$, to justify the second inequality below, we deduce that 
\begin{multline*}
\| Su^* - \ydelta \|_\Ltwo^2 + |u|_{\sigma^*,s}^2 \leq \liminf \limits_{n \to \infty} \| S\un - \ydelta \|_\Ltwo^2 + |\un|_{\sigma^n,s}^2 \\ \leq \lim\limits_{n \to \infty} \| Su - \ydelta \|_2^2 + |u|_{\sigma^n,s}^2 = \| Su - \ydelta \|_\Ltwo^2 + |u|_{\sigma^*,s}^2
\end{multline*}
for all $u \in \Hs$. Note that the last equality follows from the weak$^*$ convergence of $\sigma^*(|x-y|)$. This shows that $(\sigma^\ast,u^*) \in \mF$. Due to the weak$^*$ lower semi continuity of the involved functions we have
\begin{equation*}
\frac{1}{2} \|u^* - \uex \|_{\Ltwo}^2 + R(\sigma^*) \leq \lim\limits_{n \to \infty} \frac{1}{2} \|\un - \uex \|_{\Ltwo}^2 + R(\sigman).
\end{equation*}
Since $(\sigman, \yn, \un)$ was chosen as a minimizing sequence of \eqref{prob:bilevelsimple}, and $(\sigma^*,u^*) \in \mF$, this implies that
\begin{equation*}
\frac{1}{2} \|u^* - \uex \|_{\Ltwo}^2 + R(\sigma^*) = \inf\limits_{(u,\sigma) \in \Fad} \frac{1}{2} \| u - \uex \|_{\Ltwo}^2 + R(\sigma)
\end{equation*}
which shows that $(\sigma^*, u^*)$ is a solution to \eqref{prob:bilevelsimple}.
\end{proof}

\subsection{Optimality conditions}
In the following we derive optimality conditions for the bilevel problem. Recall that the Lagrange function 
$L \colon \Wad \times \Hs \times \Hs \to \R$ of the bilevel problem is such that
\begin{equation*}
(\sigma, u,q) \mapsto \frac{1}{2} \|u - \uex\|_\Ltwo^2 + R(\sigma) +  [\m{L}^s(\sigma) u] q + \langle Su - \ydelta, S q \rangle_\Ltwo.
\end{equation*}

\begin{proposition}[optimality system]
If $(u, \sigma) \in \Fad$ is a solution to \eqref{prob:bilevelsimple}, then there exists $q \in \Hs$ such that
\begin{subequations}
\begin{align}
R'(\sigma) [w - \sigma] + [\m{L}^s(\omega-\sigma) u] q &\geq 0 \quad \text{for all } \omega \in \Wad, \tag{optimality}\\
u - \uex + \m{L}^s(\sigma)q + S^* S q  &= 0  \quad \text{in } \Hs\dual,  \tag{adjoint}\\
S^* S u -S^* \ydelta+ \mathcal{L}^s(\sigma) u &= 0  \quad \text{in } \Hs\dual \tag{constraint}.
\end{align}
\end{subequations}
\end{proposition}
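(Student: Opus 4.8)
We sketch the argument. The plan is to eliminate the bilevel structure by replacing the lower level problem by its (necessary and sufficient) first order condition, to reduce the resulting equality-constrained problem over the convex set $\Wad$ to a differentiable problem in $\sigma$ alone, and to read off the asserted system as its first order conditions by means of an adjoint state. Throughout one uses that $S$ is injective on $\Pi_0(\Omega)$, so that $P(\sigma)$ is uniquely solvable, and one assumes $R$ to be G\^ateaux differentiable with derivative $R'$.

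First I would reformulate: by \Cref{prop:opt_sol_llp}, $(\sigma,u)\in\mF$ is equivalent to $e(\sigma,u)\coloneqq S^*Su-S^*\ydelta+\m{L}^s(\sigma)u=0$ in $\Hs\dual$, so that \eqref{prob:bilevelsimple} amounts to minimizing $J(\sigma,u)\coloneqq\tfrac12\|u-\uex\|_\Ltwo^2+R(\sigma)$ over $(\sigma,u)\in\Wad\times\Hs$ subject to $e(\sigma,u)=0$. Since $\m{L}^s(\cdot)$ is linear and bounded from $L^\infty((0,d))$ into $\Lin(\Hs,\Hs\dual)$ and $e$ is affine in $u$, the key object is the solution map $\sigma\mapsto u(\sigma)\coloneqq\bigl(S^*S+\m{L}^s(\sigma)\bigr)^{-1}S^*\ydelta$. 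It is well defined and of class $C^1$ on the set of $\sigma$ for which $S^*S+\m{L}^s(\sigma)\colon\Hs\to\Hs\dual$ is boundedly invertible; this set is open in $L^\infty((0,d))$ and, by \Cref{prop:coercivity} together with the Lax-Milgram lemma, contains $\Wad$, on which $u(\sigma)$ is the unique solution of $P(\sigma)$ (\Cref{prop:exsolllop}). Differentiating the identity $\bigl(S^*S+\m{L}^s(\sigma)\bigr)u(\sigma)=S^*\ydelta$ gives $\bigl(S^*S+\m{L}^s(\sigma)\bigr)u'(\sigma)h=-\m{L}^s(h)u(\sigma)$ in $\Hs\dual$ for every direction $h\in L^\infty((0,d))$.

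Next, a solution $(\sigma,u)$ of \eqref{prob:bilevelsimple} satisfies $u=u(\sigma)$, and $\sigma$ minimizes the reduced functional $j(\sigma)\coloneqq\tfrac12\|u(\sigma)-\uex\|_\Ltwo^2+R(\sigma)$ over the convex set $\Wad$, whence $j'(\sigma)[\omega-\sigma]\ge0$ for all $\omega\in\Wad$. I would then introduce the adjoint state $q\in\Hs$ (legitimate since $\Hs$ is reflexive and $\Ltwo\hookrightarrow\Hs\dual$) as the unique solution of $\bigl(S^*S+\m{L}^s(\sigma)\bigr)q=-(u-\uex)$ in $\Hs\dual$, which is precisely the asserted (adjoint) equation and is solvable by the same Lax-Milgram argument. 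Using the formula for $u'(\sigma)h$ from the previous step, the symmetry of $S^*S+\m{L}^s(\sigma)$, and the bilinearity of $(\sigma,u)\mapsto[\m{L}^s(\sigma)u]q$, a one-line computation gives $\langle u-\uex,u'(\sigma)h\rangle_\Ltwo=[\m{L}^s(h)u]q$. Choosing $h=\omega-\sigma$ (admissible since $\Wad$ is convex) in $j'(\sigma)[\omega-\sigma]\ge0$ then yields $R'(\sigma)[\omega-\sigma]+[\m{L}^s(\omega-\sigma)u]q\ge0$ for all $\omega\in\Wad$, which is the (optimality) inequality; the (constraint) relation is nothing but the reformulation of the first step.

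The main obstacle is functional-analytic rather than computational. One must make sure that $S^*S+\m{L}^s(\sigma)$ is an isomorphism $\Hs\to\Hs\dual$ (in fact uniformly for $\sigma\in\Wad$, by \Cref{prop:coercivity}), so that $u(\sigma)$ and $q$ are well defined and $\sigma\mapsto u(\sigma)$ is $C^1$ on a neighborhood of $\Wad$, and one must locate the multiplier $q$ correctly in $\Hs$ via reflexivity and the embedding $\Ltwo\hookrightarrow\Hs\dual$. No constraint qualification is required: surjectivity of $\partial_u e(\sigma,u)=S^*S+\m{L}^s(\sigma)$ makes the equality constraint regular, and the only inequality in the system stems from the convex set constraint $\sigma\in\Wad$.
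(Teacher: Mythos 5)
Your proposal is correct and takes essentially the same route as the paper, which simply invokes ``the standard Lagrangian based approach'' on the strength of the uniform coercivity of $S^*S+\m{L}^s(\sigma)$ from \Cref{prop:coercivity}; your reduced-functional computation with the adjoint state $q$ solving $(S^*S+\m{L}^s(\sigma))q=-(u-\uex)$ is exactly the detailed instantiation of that approach. The sign bookkeeping and the use of symmetry to obtain $\langle u-\uex,u'(\sigma)h\rangle_{\Ltwo}=[\m{L}^s(h)u]q$ check out.
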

\begin{proof}
Since as a consequence of \Cref{prop:coercivity} the lower level problem is uniformly convex for all $\sigma \in \Wad$, this follows using the standard Lagrangian based approach.
\end{proof}

\section{Nonlinear case}\label{sec:nonlinear_example}
In this section we study how nonlocal regularization operators can be learned for the inverse problem of determining parameters in partial differential equations. In this context, the forward problems is often nonlinear and additional constraints on the parameter set are needed to ensure that the forward problem is well-posed on the feasible set. We only consider scalar pointwise constraints on the parameters. More precisely, for $u_{\min}, u_{\max} \in [-\infty, \infty]$ such that $ u_{\min} < u_{\max}$ we let the feasible set be given by
\begin{equation*}
\Uad \coloneqq \left\{ u \in \Hs \mid u_{\min} \leq u(x) \leq u_{\max} \text{ \ale on } \Omega \right\}.
\end{equation*}
We are interested in the case where the forward operator is only defined implicitly as the solution operator to a PDE which depends on the sought-after parameters. We let the function describing the PDE be denoted by $e \colon Y \times \Uad \to Z$. Here $Z$ is a general Hilbert space, and $Y$ is a Hilbert space that is continuously embedded in $\Ltwo$. If for every $u \in \Uad$ there exists a unique $y \in Y$ such that $e(y,u) = 0$, then the corresponding forward operator $S \colon \Uad \to Y$  is such that $u \mapsto y$, where $y$ satisfies $e(y,u)=0$. In the following we prefer to state our hypotheses directly in terms of the function $e$ in order to facilitate the use of the presented results in practice. A detailed discussion of a particular choice of $e$, for which no constraints are required to obtain a well-posed forward problem, is provided in \Cref{sec:estimation_of_convection_term}. The lower level problems are given by
\begin{equation}\label{eq:nonlinear_lower_problem}\tag{\mbox{$P(\sigma)$}}
\min_{(y,u) \in Y \times \Hs}  \| y - \ydelta \|_\Ltwo^2 + |u|_{\sigma,s}^2  \quad \text{s.t.\ } \quad u \in \Uad \text{ and } e(y,u) = 0.
\end{equation}
Here, $\ydelta \in \Ltwo$ is a noisy measurement of the ground truth state. We let
\begin{equation*} 
\Fad \coloneqq \left\{(y,u) \in Y \times \Hs \mid u \in \Uad \text{ and } e(y,u) = 0 \right\}
\end{equation*}
denote the feasible set of the lower level problem. Moreover,
\begin{equation*}
\mF \coloneqq \left\{ (\sigma, y,u) \in L^\infty((0,d)) \times Y \times \Hs \mid \sigma \in \Wad \text{ and } (y,u) \text{ solves } \eqref{eq:nonlinear_lower_problem} \right\}
\end{equation*}
denotes the solution set of the lower level problems. The learning problem is
\begin{equation}\tag{BP}\label{prob:learning_problem_nonlinear}
\min_{(\sigma,y,u) \in \Wad \times Y \times \Uad}  \frac{1}{2} \| u - \uex \|_\Ltwo^2 + \wR(\sigma) \quad \text{subject to} \quad (\sigma,y,u) \in \mF.
\end{equation}
As in the linear case, $\uex \in \Ltwo$ is the ground truth parameter and $R \colon L^\infty((0,\diam)) \to [0,\infty)$ represents an additional regularization operator for the weight.

\subsection{Existence of solutions}

\begin{definition}[Stability]
We say that $\{$\ref{eq:nonlinear_lower_problem}$\}_{\sigma \in \Wad}$ is stable (resp. weak$^*$-to-weak stable) if the following holds: \ref{eq:nonlinear_lower_problem} has a solution for every $\sigma \in \Wad$, and for every sequence $(\sigma^n)$ in $\Wad$ such that $\sigma^n \to \sigma$ in $L^\infty((0,d))$ (resp. $\sigma^n \rightharpoonup^* \sigma$ in $L^\infty((0,d))$) for some $\sigma \in \Wad$, it follows that every sequence of corresponding solutions to \ref{eq:nonlinear_lower_problem}  has a strong (resp. weak) accumulation point, and every such accumulation point is a solution to \ref{eq:nonlinear_lower_problem}.

\end{definition}

\begin{theorem}[Existence of solutions]
Assume that \ref{eq:nonlinear_lower_problem} is weak$^*$-to-weak stable and $R$ is weak$^*$ sequentially lower semicontinuous. Then 
\eqref{prob:learning_problem_nonlinear} has a solution. 
\end{theorem}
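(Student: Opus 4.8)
The plan is to run the direct method of the calculus of variations, using the weak$^*$-to-weak stability hypothesis in place of the stability lemma \Cref{lem:stability} that served this purpose in the linear case.

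I would first record that the feasible set $\mF$ is nonempty: weak$^*$-to-weak stability in particular guarantees that \ref{eq:nonlinear_lower_problem} has a solution for each $\sigma \in \Wad$, and $\Wad \neq \emptyset$. Because the objective $\frac{1}{2}\| u - \uex \|_\Ltwo^2 + R(\sigma)$ is nonnegative, the infimum over $\mF$ is a finite value $\iota \geq 0$, and I fix a minimizing sequence $(\sigman, \yn, \un)$ in $\mF$. Next comes the compactness extraction. The set $\Wad$ is bounded in $L^\infty((0,d))$, and the constraints \ref{item:weight_upper_bound}--\ref{item:weight_lower_bound} are stable under weak$^*$ limits (test against nonnegative functions in $L^1$); since bounded subsets of $L^\infty((0,d))$ are sequentially weak$^*$ compact, $L^1((0,d))$ being separable, $\Wad$ is sequentially weak$^*$ compact. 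Hence, along a subsequence (not relabeled), $\sigman \rightharpoonup^* \sigma^*$ in $L^\infty((0,d))$ with $\sigma^* \in \Wad$.

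For each $n$, $(\yn, \un)$ is a solution of \ref{eq:nonlinear_lower_problem} associated with $\sigman$. Applying the weak$^*$-to-weak stability hypothesis to the sequence $(\sigman)$ with limit $\sigma^*$, the sequence of lower-level solutions admits a weak accumulation point in $Y \times \Hs$; passing to a further subsequence (again not relabeled) I obtain $\un \rightharpoonup u^*$ in $\Hs$ and $\yn \rightharpoonup y^*$ in $Y$, and the same hypothesis ensures that $(y^*, u^*)$ solves \ref{eq:nonlinear_lower_problem} for $\sigma^*$; in particular $(\sigma^*, y^*, u^*) \in \mF$. It then remains to pass to the limit in the objective along this subsequence. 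Since $\Hs$ embeds continuously---indeed compactly---into $\Ltwo$, we have $\un \rightharpoonup u^*$ (in fact $\un \to u^*$) in $\Ltwo$, so the convex and continuous map $u \mapsto \frac{1}{2}\|u - \uex\|_\Ltwo^2$ is weakly sequentially lower semicontinuous along the sequence, while $R$ is weak$^*$ sequentially lower semicontinuous by assumption. Therefore
\begin{equation*}
\iota \;\leq\; \frac{1}{2}\|u^* - \uex\|_\Ltwo^2 + R(\sigma^*) \;\leq\; \liminf_{n\to\infty}\left(\frac{1}{2}\|\un - \uex\|_\Ltwo^2 + R(\sigman)\right) \;=\; \iota ,
\end{equation*}
whence $(\sigma^*, y^*, u^*)$ is a minimizer of \eqref{prob:learning_problem_nonlinear}.

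The only genuinely delicate point---sequential closedness of $\mF$ under weak$^*$ convergence of the weights combined with weak convergence of the state--parameter pairs---has been packaged into the definition of weak$^*$-to-weak stability and is therefore invoked rather than proved here; the residual work is routine, the sole step demanding care being the two successive subsequence extractions (first for the weights, then for the lower-level solutions) and the transfer of weak $\Hs$-convergence to the $L^2$ tracking term.
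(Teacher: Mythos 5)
Your proposal is correct and takes essentially the same route as the paper: the paper compresses the argument into the observation that weak$^*$ sequential compactness of $\Wad$ plus the stability hypothesis makes $\mF$ weak$^*$-weak sequentially compact, so the sequentially lower semicontinuous objective attains its minimum there, which is exactly the direct-method argument you spell out. The only difference is that you make the subsequence extractions and the lower semicontinuity of the tracking term explicit, whereas the paper invokes the abstract compactness-plus-lsc principle.
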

\begin{proof}
Since $\Wad$ is clearly weak$^*$ sequentially compact, weak$^*$-to-weak stability of \ref{eq:nonlinear_lower_problem} implies that $\m{F}$ is weak$^*$-weak sequentially compact. The claim now follows from the well-known fact that a function which is sequentially lower semicontinuous with respect to some topology, attains a minimum on a nonempty set which is sequentially compact with respect to the same topology. 
\end{proof}

\subsection{Optimality conditions}
We derive necessary optimality conditions for the learning problem \eqref{prob:learning_problem_nonlinear}. Here we essentially follow the discussion provided in \cite[Section 5]{Holler2018}. Throughout this section it is assumed that there exists an open neighborhood $V$ of $\Uad$ such that the function $e$ describing the state constraint is well defined and at least once continuously F-differentiable on $Y \times V$. We begin by recalling the Karush-Kuhn-Tucker conditions of the lower level problem. 
\begin{definition}[Karush-Kuhn-Tucker conditions]\label{definition:KKTconditions}
We say that $(y,u, \lambda) \in Y \times \Uad \times Z\dual$ satisfies the Karush-Kuhn-Tucker (KKT) conditions of \eqref{eq:nonlinear_lower_problem} if  
\begin{subequations}
\begin{align}
- \left[ \m{L}^s(\sigma) u  + \lambda D_u e(y,u) \right] &\in \Uad(u)^0, \label{eq:abstract_nonlinear_optimality_llp}\\
\langle y-\ydelta, w \rangle_\Ltwo + \langle \lambda D_y e(y,u), w \rangle &= 0, \quad \text{for all } w \in Y, \label{eq:abstract_nonlinear_adjoint_llp}\\
e(y,u) &= 0, \quad \text{in } Z. \label{eq:abstract_nonlinear_state_llp}
\end{align}
\end{subequations}
Here $\Uad(u) \coloneqq \{ \lambda(v- u) \mid \lambda \geq 0,  v \in \Uad \}$ is the conical hull of $\Uad - \{u\}$ and $\Uad(u)^0 \coloneqq \{ \lambda \in \Hs\dual \mid \langle f, w \rangle \leq 0  \text{ for all } w \in \Uad(u) \} $ is the polar cone of $\Uad(u)$.
\end{definition}
The KKT conditions constitute a system of first order necessary optimality conditions provided a suitable regularity assumption is met. More precisely, if $(y,u) \in Y \times \Uad$ solves \eqref{eq:nonlinear_lower_problem} and $D_y e(y,u) \in \m{L}(Y,Z)$ is bijective, then there exists a unique $\lambda \in Z \dual$  such that $(y, u, \lambda)$ satisfies the KKT conditions of 
\eqref{eq:nonlinear_lower_problem}. Here existence of $\lambda $ follows from \cite[Theorem 3.1]{Zowe1979} and proving uniqueness is straightforward. Next, we recall the Lagrange function $L \colon \Wad \times Y \times \Uad \times Z\dual \to \R$ of the lower level problem given by
\begin{equation*}
(\sigma, y, u, \lambda) \mapsto \| y - \ydelta \|_\Ltwo^2 + |u|_{\sigma,s}^2 + \lambda e(y, u) 
\end{equation*}
for $(\sigma, y, u, \lambda) \in \Wad \times Y \times \Uad \times Z\dual$,
which enables us to write second order sufficient optimality conditions in a compact form. Second order sufficient optimality conditions have many important practical implications, see \eg \cite{CasasTroeltzsch2015_Second} and the references given therein. In particular, they are closely related to stability properties of the solution mapping, see
 \cite[Chapter 2]{ItoKunisch2008_Lagrange}. It is thus of no surprise that second order sufficient conditions of the lower level problem are important for the derivation of optimality conditions for the learning problem. From now on, we assume that $e$ is at least twice continuously F-differentiable in an open neighbourhood of $Y \times \Uad$.

\begin{definition}[second order sufficient optimality condition]
We say that a point $(y,u) \in Y \times \Uad$ satisfies the second order sufficient optimality condition of \eqref{eq:nonlinear_lower_problem} if $D_y e(y,u) \in \m{L}(Y,Z)$ is bijective and there exist $\lambda \in Z\dual$ 
and $\mu > 0$ such that $(y,u,\lambda)$ satisfies the KKT conditions, and
\begin{equation*}
D^2_{(y,u)}L(\sigma, y,u, \lambda) [(\delta_y, \delta_u),(\delta_y, \delta_u)] \geq \mu \|(\delta_y, \delta_u) \|_{\He \times \Hs}^2 
\end{equation*} 
for all $(\delta_y, \delta_u) \in \ker D e(y,u) \cap (Y \times (\Uad  -\Uad))$.
\end{definition}
The constraint that feasible points must be solutions to a lower level problem prevents the direct use of Lagrangian based approaches for obtaining optimality conditions. To overcome this issue, at least to some extend, one usually considers the KKT reformulation of  bilevel optimization problem, in which the lower level problem is replaced by its KKT conditions, see \eg \cite[Section 5.5]{Dempe2002}. The KKT reformulation of the learning problem is given by
\begin{equation}\label{prob:KKT_reformulation}\tag{BP*}
\begin{split}
\min  \frac{1}{2} \| &u - \uex \|_{\Ltwo}^2 + R(\sigma) \quad \\ \text{subject to } (&\sigma, y, u, \lambda) \in \Wad \times Y \times \Uad \times Z\dual \text{ satisfies \eqref{eq:abstract_nonlinear_optimality_llp}-\eqref{eq:abstract_nonlinear_state_llp}.}
\end{split}
\end{equation}
In general, the constraints of \eqref{prob:KKT_reformulation} are easier to handle than the constraints of the original problem \eqref{prob:learning_problem_nonlinear}. For example, if there are no control constraints in the lower level problem, then the constraints of the KKT reformulated problem consist only of equality and convex constraints. In order to use the KKT reformulation \eqref{prob:KKT_reformulation} to obtain optimality conditions for the learning problem \eqref{prob:learning_problem_nonlinear}, the relation between both problems needs to be investigated. Clearly, if the lower level problem is convex for every weight $\sigma \in \Wad$, then both problems are equivalent. In general, this is not the case since points satisfying the KKT conditions of the lower level problem need not be solutions to the lower level problem. Note, however, that we are only interested in \eqref{prob:KKT_reformulation} to obtain optimality conditions for the learning problem. Consequently, for our purposes it is sufficient to know under which conditions a solution to the learning problem is guaranteed to be a local solution to \eqref{prob:KKT_reformulation}. 
\begin{theorem}\label{thm:localsolution}
Let $(\sigma^*, y^*, u^*)$ be a solution to \eqref{prob:learning_problem_nonlinear} and assume that the following statements hold:
\begin{enumerate}[label=(A\arabic*)]
\item \{\eqref{eq:nonlinear_lower_problem}\}$_{\sigma \in \Wad}$ is stable with respect to the weights,
\item $(y^*,u^*)$ satisfies the second order sufficient optimality condition of \eqref{eq:nonlinear_lower_problem} for $\sigma=\sigma^*$,
\item $(y^*,u^*)$ is the unique solution to \eqref{eq:nonlinear_lower_problem} for $\sigma = \sigma^*$.
\end{enumerate}
Then there is a unique $\lambda^* \in Z\dual$ such that $(\sigma^*, y^*, u^*, \lambda^*)$ is a local solution to \eqref{prob:KKT_reformulation}.
\end{theorem}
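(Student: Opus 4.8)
The plan is to show that $(\sigma^*, y^*, u^*, \lambda^*)$ is a \emph{local} solution to \eqref{prob:KKT_reformulation} by ruling out the existence of feasible points of \eqref{prob:KKT_reformulation} that are arbitrarily close to $(\sigma^*, y^*, u^*)$ and have strictly smaller objective value. The first step is to produce $\lambda^*$: since $(y^*, u^*)$ satisfies the second order sufficient optimality condition of \eqref{eq:nonlinear_lower_problem} for $\sigma = \sigma^*$, in particular $D_y e(y^*, u^*) \in \m{L}(Y,Z)$ is bijective, so by the argument recalled before Definition \ref{definition:KKTconditions} (using \cite[Theorem 3.1]{Zowe1979}) there is a unique $\lambda^* \in Z\dual$ such that $(\sigma^*, y^*, u^*, \lambda^*)$ satisfies \eqref{eq:abstract_nonlinear_optimality_llp}--\eqref{eq:abstract_nonlinear_state_llp}; hence it is feasible for \eqref{prob:KKT_reformulation} and realizes the same objective value as $(\sigma^*, y^*, u^*)$ does in \eqref{prob:learning_problem_nonlinear}.

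Next I would argue by contradiction. Suppose $(\sigma^*, y^*, u^*, \lambda^*)$ is not a local solution to \eqref{prob:KKT_reformulation}. Then there is a sequence $(\sigma^n, y^n, u^n, \lambda^n)$ of points feasible for \eqref{prob:KKT_reformulation}, converging to $(\sigma^*, y^*, u^*, \lambda^*)$ in the relevant topology, with objective value strictly below that of $(\sigma^*, y^*, u^*)$. By assumption (A2), $(y^*, u^*)$ satisfies the \emph{second order sufficient} optimality condition of the lower level problem for $\sigma^*$. The key ingredient is a stability statement for the lower level problem: second order sufficient conditions imply that for weights $\sigma$ near $\sigma^*$ and KKT points $(y,u,\lambda)$ near $(y^*,u^*,\lambda^*)$, the point $(y,u)$ is in fact a (local) solution of \eqref{eq:nonlinear_lower_problem}; this is the kind of perturbation result discussed in \cite[Chapter 2]{ItoKunisch2008_Lagrange}, and is the reason the second order condition was introduced. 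Applying this to the sequence $(\sigma^n, y^n, u^n, \lambda^n)$, for $n$ large each $(y^n, u^n)$ is a local solution of \eqref{eq:nonlinear_lower_problem} for $\sigma = \sigma^n$, hence (after possibly invoking that near $(y^*, u^*)$ the only candidate is a local minimizer and using the convex/quadratic structure of $|u|_{\sigma,s}^2$ together with bijectivity of $D_y e$) lies on the appropriate solution branch, so $(\sigma^n, y^n, u^n) \in \mF$ up to selecting a subsequence.

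Then assumptions (A1) and (A3) close the argument. Since $(\sigma^n, y^n, u^n) \in \mF$ and these are feasible for \eqref{prob:learning_problem_nonlinear} with objective value strictly below the optimum attained at $(\sigma^*, y^*, u^*)$, this already contradicts optimality of $(\sigma^*, y^*, u^*)$ for \eqref{prob:learning_problem_nonlinear} --- provided one knows the $(y^n, u^n)$ really are the lower level solutions. The role of stability (A1) is to guarantee that the accumulation point of the lower level solutions is again a lower level solution, and the role of uniqueness (A3) is to pin that accumulation point down to $(y^*, u^*)$, which both certifies that the $(y^n, u^n)$ from the second-order perturbation analysis are the genuine solution branch (so that strict descent in \eqref{prob:KKT_reformulation} transfers to strict descent in \eqref{prob:learning_problem_nonlinear}) and prevents the contradiction sequence from ``escaping'' to a different lower level solution. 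Uniqueness of $\lambda^*$ follows as before from bijectivity of $D_y e(y^*, u^*)$.

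I expect the main obstacle to be the middle step: carefully establishing, from the second order sufficient optimality condition, that KKT points of the perturbed lower level problems near $(y^*, u^*, \lambda^*)$ are actually \emph{solutions} of those perturbed problems, and doing so uniformly enough along the sequence to conclude $(\sigma^n, y^n, u^n) \in \mF$. This requires a local uniqueness/implicit-function-type argument for the KKT system (using bijectivity of $D_y e$ and the coercivity of $D^2_{(y,u)} L$ on the critical cone) combined with the openness of the admissible neighborhood $V$ of $\Uad$; the control constraints $\Uad$ and the polar cone condition \eqref{eq:abstract_nonlinear_optimality_llp} make this more delicate than in the unconstrained case, which is presumably why the discussion follows \cite[Section 5]{Holler2018} closely.
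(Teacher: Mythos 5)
Your plan follows the same route as the paper, which proves this result by direct reference to \cite[Theorem 5.1]{Holler2018}: construct $\lambda^*$ from the surjectivity of $D_ye(y^*,u^*)$ via \cite{Zowe1979}, then argue by contradiction with a sequence of \eqref{prob:KKT_reformulation}-feasible points converging to $(\sigma^*,y^*,u^*,\lambda^*)$ with strictly smaller cost, and use (A1)--(A3) to force these points into $\mF$, contradicting optimality for \eqref{prob:learning_problem_nonlinear}. You also correctly identify the distinct roles of the three hypotheses, and correctly note that uniqueness of $\lambda^*$ comes from surjectivity of $D_ye(y^*,u^*)$ in the adjoint equation. One small point worth making explicit: the local-solution concept in \eqref{prob:KKT_reformulation} is with respect to the norm topology, so the contradiction sequence satisfies $\sigma^n\to\sigma^*$ strongly in $L^\infty((0,d))$, which is exactly why (A1) is the \emph{strong} stability notion rather than the weak$^*$-to-weak one used in the existence theorem.

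The step you flag as the ``main obstacle'' is indeed where the work lies, and your sketch stops one move short of closing it. Being a local solution of \ref{eq:nonlinear_lower_problem} for $\sigma^n$ is not enough: membership in $\mF$ requires a \emph{global} solution. The mechanism that upgrades local to global is the following. The second order sufficient condition at $(y^*,u^*,\lambda^*)$ persists, with a uniform coercivity constant and a uniform growth radius $r>0$, for all KKT points $(y^n,u^n,\lambda^n)$ of $P(\sigma^n)$ sufficiently close to $(y^*,u^*,\lambda^*)$ (continuity of $D^2_{(y,u)}L$ and of $De$, with the cone $Y\times(\Uad-\Uad)$ unchanged); this yields quadratic growth of the lower level cost on a ball of radius $r$ around $(y^n,u^n)$, uniformly in $n$. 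Separately, by (A1) the problems $P(\sigma^n)$ have global solutions $(\hat y^n,\hat u^n)$, every accumulation point of which solves $P(\sigma^*)$, and by (A3) that solution is $(y^*,u^*)$; hence $(\hat y^n,\hat u^n)\to(y^*,u^*)$ and for large $n$ the global minimizer lies inside the radius-$r$ growth ball of $(y^n,u^n)$. Quadratic growth then gives $J_{\sigma^n}(\hat y^n,\hat u^n)\geq J_{\sigma^n}(y^n,u^n)$ while global optimality gives the reverse inequality, and strictness of the growth forces $(\hat y^n,\hat u^n)=(y^n,u^n)$. This is the precise sense in which (A1) and (A3) ``prevent escape to a different solution branch''; without it, your contradiction with optimality of $(\sigma^*,y^*,u^*)$ for \eqref{prob:learning_problem_nonlinear} does not go through, since the $(y^n,u^n)$ could a priori be spurious KKT points or merely local minimizers.
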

\begin{proof}
The proof is the same as for \cite[Theorem 5.1]{Holler2018} if one replaces $\alpha^*$ and the interval $[\ubar{\alpha}, \bar{\alpha}]$ by $\sigma^*$ and $\Wad$, respectively. 
\end{proof}

As a direct consequence of the above theorem, we get the following: Any solution to \eqref{prob:learning_problem_nonlinear}, for which the assumptions of \Cref{thm:localsolution} hold, satisfies the optimality conditions to be a local solution to \eqref{prob:KKT_reformulation}. Unfortunately, the derivation of optimality conditions for \eqref{prob:KKT_reformulation} which are convenient for numerical realization, are still impeded by the presence of control constraints in the lower level problem, which in turn lead to set valued constraints in \eqref{prob:KKT_reformulation}. Issues involving such constraints seem to be not yet fully resolved (at least in the infinite dimensional case) and are subject to ongoing research, see \eg \cite{Harder2018}. For this reason, we only consider the case without constraints in the following lemma. 

\begin{lemma} Assume that $\Uad = \Hs$. Let $(\sigma^*, y^*,u^*,\lambda^*)$ be a local solution to \eqref{prob:KKT_reformulation} with $(y^*,u^*)$ satisfying the second order sufficient optimality condition of \eqref{eq:nonlinear_lower_problem} for $\sigma = \sigma^*$. Then there is a unique $(p^*,q^*,z^*) \in Y \times \Hs \times Z\dual$ such that
\begin{subequations}
\begin{align}
D_\sigma R(\sigma^*)[\sigma - \sigma^*] + [\m{L}^s(\sigma-\sigma^*)u^*] q^* \geq 0, \quad \forall \sigma \in \Wad,\label{eq:nonlinear_optimality}  \\
p^* + \lambda^* e_{yy}(y^*,u^*) p^* +\lambda^* e_{yu}(y^*,u^*) q^*+z^* e_y(y^*,u^*)=0, \label{eq:nonlinear_adjoint1} \\
u^*-\uex + \lambda^* e_{uy}(y^*,u^*) p^* + \m{L}^s(\sigma^*) q^* + \lambda^* e_{uu} (y^*,u^*) q^* + z^* e_u(y^*,u^*)=0, \label{eq:nonlinear_adjoint2}\\
e_y(y^*,u^*) p^*   + e_u(y^*,u^*) q^* =0. \label{eq:nonlinear_adjoint3}
\end{align}
\end{subequations}
Here we write $e_{yy}(y^*,u^*) \coloneqq D_y^2 e(y^*,u^*)$, $e_{yu} \coloneqq D_y D_u e (y^*,u^*)$ and analogously for the other partial derivatives.  
\end{lemma}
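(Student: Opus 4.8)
The plan is to derive the optimality system for the KKT-reformulated problem \eqref{prob:KKT_reformulation} by the standard Lagrange multiplier approach for equality-constrained optimization in Banach spaces, exploiting that the second order sufficient optimality condition provides the regularity needed to guarantee existence of multipliers. Since $\Uad = \Hs$, the set-valued constraint \eqref{eq:abstract_nonlinear_optimality_llp} collapses to the equation $\m{L}^s(\sigma) u + \lambda D_u e(y,u) = 0$ in $\Hs\dual$, so the feasible set of \eqref{prob:KKT_reformulation} is now described purely by the three equations \eqref{eq:abstract_nonlinear_optimality_llp}--\eqref{eq:abstract_nonlinear_state_llp}, viewed as a single constraint map $G \colon \Wad \times Y \times \Hs \times Z\dual \to \Hs\dual \times Y\dual \times Z$. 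First I would verify that $G$ is continuously F-differentiable near $(\sigma^*, y^*, u^*, \lambda^*)$; this uses the assumed twice continuous F-differentiability of $e$ on a neighbourhood of $Y \times \Uad$ and the linearity (hence smoothness) of $\sigma \mapsto \m{L}^s(\sigma)$ together with boundedness of $\m{L}^s(\sigma) \colon \Hs \to \Hs\dual$.

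Next I would check the constraint qualification, namely that the partial derivative of $G$ with respect to $(y,u,\lambda)$ at the solution point is surjective onto $\Hs\dual \times Y\dual \times Z$ with closed range (equivalently, that the linearized constraint operator is onto). This is precisely where the second order sufficient optimality condition enters: by that condition $D_y e(y^*,u^*) \in \m{L}(Y,Z)$ is bijective, so the state equation and adjoint equation blocks are invertible in the appropriate variables, and the coercivity of $D^2_{(y,u)}L$ on the relevant kernel makes the reduced Hessian of the lower level Lagrangian an isomorphism; combining these via a block-elimination (solve the state block for $\delta_y$, the adjoint block for the component of the multiplier direction, and invert the reduced second-order form for the remaining directions) yields surjectivity. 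With the constraint qualification in hand, the Lagrange multiplier rule (e.g.\ \cite[Theorem 3.1]{Zowe1979} as already invoked in the paper, or the classical Lyusternik–Graves/Zowe–Kurcyusz theorem) furnishes multipliers $(p^*, q^*, z^*) \in Y \times \Hs \times Z\dual$ — paired respectively with the three constraint equations — such that the derivative of the full Lagrangian of \eqref{prob:KKT_reformulation} with respect to $\sigma$, $y$, $u$, and $\lambda$ vanishes (the $\sigma$-derivative only gives a variational inequality because $\sigma$ is restricted to the convex set $\Wad$).

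Writing out these four stationarity conditions explicitly gives exactly the system \eqref{eq:nonlinear_optimality}--\eqref{eq:nonlinear_adjoint3}: differentiating in $\lambda$ reproduces the linearized state/adjoint coupling \eqref{eq:nonlinear_adjoint3}; differentiating in $y$ produces \eqref{eq:nonlinear_adjoint1} with the second derivatives $e_{yy}, e_{yu}$ of $e$ appearing through the term $\lambda D_{(y,u)}^2 e$; differentiating in $u$ produces \eqref{eq:nonlinear_adjoint2}, where the term $\m{L}^s(\sigma^*) q^*$ comes from differentiating the first constraint in $u$ and the $e_{uy}, e_{uu}$ terms from the Hessian of $e$; and differentiating in $\sigma$ (along admissible directions $\sigma - \sigma^*$, $\sigma \in \Wad$) gives \eqref{eq:nonlinear_optimality}, the $\m{L}^s(\sigma-\sigma^*)u^*$ term arising because $\m{L}^s$ is linear in $\sigma$ so its $\sigma$-derivative applied to $q^*$ is simply $[\m{L}^s(\cdot)u^*]q^*$. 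Uniqueness of $(p^*,q^*,z^*)$ follows from the injectivity half of the surjectivity argument, i.e.\ the linearized constraint operator has trivial cokernel, so the multipliers are uniquely determined. The main obstacle I anticipate is the careful verification of the surjectivity/regularity of the linearized constraint map: one must combine the bijectivity of $D_y e(y^*,u^*)$ with the coercivity of the reduced Hessian from the second order sufficient condition in a correctly ordered block elimination, taking care that the coercivity holds on $\ker De(y^*,u^*)$ rather than on all of $Y \times \Hs$, and that the duality pairings on $\Hs\dual$, $Y\dual$, $Z$ are matched consistently; the remaining computations are routine differentiations of bilinear and trilinear forms. As in \Cref{thm:localsolution}, this argument parallels \cite[Section 5]{Holler2018}, with $\sigma^*$ and $\Wad$ playing the roles of $\alpha^*$ and $[\ubar{\alpha},\bar{\alpha}]$.
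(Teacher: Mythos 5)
Your proposal is correct and follows essentially the same route as the paper, which simply transfers the argument of \cite[Lemma 5.1]{Holler2018} (replacing $\alpha^*$ and $[\ubar{\alpha},\bar{\alpha}]$ by $\sigma^*$ and $\Wad$): treat the KKT system of the lower level problem as an equality constraint (possible since $\Uad=\Hs$), verify the constraint qualification by combining bijectivity of $D_y e(y^*,u^*)$ with the coercivity of the reduced Hessian supplied by the second order sufficient condition, and read off the stationarity conditions of the resulting Lagrangian, with uniqueness of $(p^*,q^*,z^*)$ coming from surjectivity of the linearized constraint operator. Your sketch correctly identifies the two delicate points (the block elimination for surjectivity and the fact that coercivity is only available on $\ker De(y^*,u^*)$), which are exactly the ones handled in the cited reference.
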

\begin{proof}
The proof is the same as for \cite[Lemma 5.1]{Holler2018} if one replaces $\alpha^*$ and the interval $[\ubar{\alpha}, \bar{\alpha}]$ by $\sigma^*$ and $\Wad$, respectively. 
\end{proof}

\subsection{Estimation of the convection term}\label{sec:estimation_of_convection_term}
We consider the problem of estimating a vector valued convection term $b^\dagger \in \Hs^N$ in an elliptic PDE based on a noisy observation $\ydelta \in \Ltwo$ of the ground truth state $\yex \in \Hen$. The function $e \colon \Hen \times \Lq^N \to \Hend$ describing the PDE is in its weak form given by
\begin{equation}\label{eq:advection}
e(y,b) w = \int_\Omega \grad y \cdot \grad w + b \cdot (w\grad y)  +  c y w - f w \dx, 
\end{equation}
for $(y,b,w) \in \Hen \times \Lq^N \times \Hen$. Here $f \in \Ltwo$ is a nonzero given source term, $c \in \Linfty$ is a given potential term, which is assumed to be nonnegative almost everywhere, and $1 < q < \infty$. We restrict ourselves to dimension $N \in \{1,2,3\}$. As we will see (\Cref{prop:Lq_stability_state_equation_examplemulti}), we need to require $q>2$ if $N \in \{1,2\}$ and $q \geq 3$ if $N=3$ to ensure that $e$ is well-defined, and that the PDE $e(y,b)=0$ has a unique solution $y \in \Hen$ for every $b \in \Lq^N$. Since we are interested in the case $b \in \Hs^N$, we often require that $\Hs$ is compactly embedded in $\Lq$ for $q$ satisfying the above requirements. To achieve this, we frequently make the following assumption:
\begin{enumerate}[label=(B)]
\item  If $N \in \{ 1, 2\}$ then $s \in (0,1)$ and if $N=3$ then $s \in (1/2,1)$.\label{assumption:convection_relation_dim_s}
\end{enumerate}
We emphasize that making use of results from \cite{CasasMateosRoesch2019} we neither assume that $\text{div} \, b=0$ nor that $b$ is small in the $L^\infty(\Omega)^N$ norm. The lower level problem is given by
\begin{equation}\tag{\mbox{$\text{P}_\text{adv}(\sigma)$}}\label{prob:problem_convection}
\min_{(y,b) \in \Hen \times \Hs^N} \| y - \ydelta \|_\Ltwo^2 + | b |_{\sigma,s}^2 \quad \text{subject to } \quad e(y,b) = 0.
\end{equation}
The definition of the nonlocal energy seminorm $|\cdot|_{\sigma,s}$ is thereby extended to vector valued functions $b = (b_1,\dots, b_N)^\top \in \Hs^N$ by letting
$
| b |_{\sigma,s}^2 \coloneqq \sum_{i=1}^N |b_i|_{\sigma,s}^2.
$. 
The learning problem is
\begin{equation}\tag{\mbox{$\text{BP}_\text{adv}$}}\label{prob:learning_problem_advection}
\min_{(\sigma,y,b) \in \Wad \times \Hen \times \Hs^N} \frac{1}{2} \| b -b^\dagger \|_\Ltwo^2 + \wR(\sigma) \quad \text{subject to} \quad (\sigma, y,b) \in \mF.
\end{equation}
We begin by verifying that $e$ is well-defined and stable with respect to the state and convection term. 
\begin{proposition}\label{prop:Lq_stability_state_equation_examplemulti}
Let $e$ be as in \eqref{eq:advection} with $q > 2$ if $N \in \{ 1, 2\}$ and $q \geq 3$ if $N=3$. Then
\begin{enumerate}[label=\roman*)]
\item $e$ is well-defined, infinitely many times F-differentiable, and (weak, strong)-to-weak sequentially continuous as a mapping from $\Hen \times \Lq^N$ to $\Hend$,\label{item_conv_weak_strong_continuity}
\item for every $b \in \Lq^N$ there is a unique $y(b) \in \Hen$ such that $e(y(b),b)=0$,\label{item:uniqueness_sol_conv}
\item  $D_y e(y,b)$ is bijective for every $(y,b) \in \Hen \times \Lq^N$,\label{item:uniqueness_sol_linearized_conv}
\item the mapping $b \mapsto y(b)$ such that $e(y(b),b)=0$ is continuously F-differentiable as a mapping from $\Lq^N$ to $\Hen$. \label{item:cont_F_differentiability}

\end{enumerate}

\end{proposition}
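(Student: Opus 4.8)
The plan is to establish \ref{item_conv_weak_strong_continuity}, then \ref{item:uniqueness_sol_linearized_conv}, then \ref{item:uniqueness_sol_conv}, and finally \ref{item:cont_F_differentiability}, since the bijectivity statement \ref{item:uniqueness_sol_linearized_conv} is the linchpin for the last two. \textbf{Well-definedness and smoothness.} For $(y,b)\in\Hen\times\Lq^N$ the three terms $\int_\Omega\grad y\cdot\grad w$, $\int_\Omega cyw$ and $\int_\Omega fw$ in \eqref{eq:advection} are controlled by Cauchy--Schwarz using $c\in\Linfty$ and $f\in\Ltwo$. For the convection term I would introduce the exponent $r$ defined by $1/r=1/2-1/q$; the hypotheses on $(N,q)$ are exactly what makes the Sobolev embedding $\He\hookrightarrow L^r(\Omega)$ hold (with $r\le 2N/(N-2)$, hence $q\ge 3$, when $N=3$, and $r$ an arbitrary finite exponent, hence $q>2$, when $N\in\{1,2\}$), so H\"older's inequality gives $\bigl|\int_\Omega b\cdot(w\grad y)\bigr|\le\|b\|_{\Lq^N}\|w\|_{L^r}\|\grad y\|_{\Ltwo}\le C\|b\|_{\Lq^N}\|y\|_{\He}\|w\|_{\He}$. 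Since $w\mapsto e(y,b)w$ is linear this shows $e(y,b)\in\Hend$, with a bound that is affine in $\|y\|_{\He}$ and has coefficients depending continuously on $\|b\|_{\Lq^N}$. Because $e$ is a polynomial map of degree at most $2$ in $(y,b)$ -- affine in $y$, with the $y$-linear part depending linearly and boundedly on $b$ -- it is $C^\infty$ with explicitly computable derivatives. For (weak, strong)-to-weak sequential continuity, take $y^n\rightharpoonup y$ in $\Hen$ and $b^n\to b$ in $\Lq^N$; for fixed $w$ the terms $\int_\Omega\grad y^n\cdot\grad w$ and $\int_\Omega fw$ converge by weak convergence, $\int_\Omega cy^nw\to\int_\Omega cyw$ because $\Hen$ is compactly embedded in $\Ltwo$, and for the convection term I would split $\int_\Omega b^n\cdot(w\grad y^n)-\int_\Omega b\cdot(w\grad y)$ into $\int_\Omega(b^n-b)\cdot(w\grad y^n)$, which is bounded by $\|b^n-b\|_{\Lq^N}\|w\|_{L^r}\|\grad y^n\|_{\Ltwo}\to 0$, plus $\int_\Omega b\cdot(w\grad(y^n-y))$, which tends to $0$ since $bw\in\Ltwo$ and $\grad(y^n-y)\rightharpoonup 0$ in $\Ltwo$.

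\textbf{Bijectivity of $D_ye$.} Since $e$ is affine in $y$, the operator $A_b\coloneqq D_ye(y,b)$ is independent of $y$ and given by $[A_bv]w=\int_\Omega\grad v\cdot\grad w+(b\cdot\grad v)w+cvw$. I would decompose $A_b=A_0+K_b$ with $A_0v\coloneqq-\Delta v+cv$, which is an isomorphism of $\Hen$ onto $\Hend$ by the Lax--Milgram lemma (here $c\ge0$ is used), and $K_bv\coloneqq b\cdot\grad v$. The operator $K_b$ is compact: the multiplication map $T_b\colon\Hen\to\Ltwo^N$, $T_bw\coloneqq bw$, is bounded by the H\"older/Sobolev estimate above and compact, being the $\Lq$-norm limit of the compact maps $w\mapsto b_kw$, $b_k\in\Linfty^N$ (which factor through the compact embedding $\Hen\hookrightarrow\Ltwo$); since $\langle K_bv,w\rangle=\langle\grad v,T_bw\rangle_{\Ltwo^N}$ we have $K_b=T_b^*\circ\grad$, a composition of a bounded and a compact operator. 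Thus $A_b$ is a compact perturbation of an isomorphism, so by the Fredholm alternative it is bijective (with bounded inverse, by the open mapping theorem) as soon as it is injective. Injectivity is the one point that the usual energy test $w=v$ cannot settle, since it produces the uncontrollable term $\tfrac12\int_\Omega b\cdot\grad(v^2)$ which cannot be integrated by parts when $\operatorname{div}b$ does not exist; here I would invoke the results of \cite{CasasMateosRoesch2019}, which treat precisely the advective form with $b\in\Lq^N$ and $c\ge0$ without a divergence-free or smallness hypothesis, to conclude $A_bv=0\Rightarrow v=0$.

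\textbf{State equation and control-to-state map.} For $b\in\Lq^N$ the equation $e(y,b)=0$ reads $A_by=F$ with $Fw\coloneqq\int_\Omega fw$, so $F\in\Hend$ and, by the bijectivity just proved, it has a unique solution $y(b)\in\Hen$; this is \ref{item:uniqueness_sol_conv}. For \ref{item:cont_F_differentiability} I would apply the implicit function theorem to $e$ at $(y(b),b)$: by the first step $e$ is $C^1$ (indeed $C^\infty$) on $\Hen\times\Lq^N$, and by \ref{item:uniqueness_sol_linearized_conv} the partial derivative $D_ye(y(b),b)=A_b$ is a Banach space isomorphism, so $b\mapsto y(b)$ is $C^1$ near each point of $\Lq^N$; by the global uniqueness from \ref{item:uniqueness_sol_conv} the local branches coincide with $b\mapsto y(b)$, which is therefore continuously F-differentiable on all of $\Lq^N$.

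\textbf{Main obstacle.} Everything except the injectivity of $A_b$ is a routine combination of Sobolev embeddings, the Fredholm alternative and the implicit function theorem. The genuine difficulty is the uniqueness in \ref{item:uniqueness_sol_conv}/\ref{item:uniqueness_sol_linearized_conv}: for the non-divergence (advective) form of the convection term with merely $b\in\Lq^N$ and no structural assumption beyond $c\ge0$, the standard coercivity/energy argument breaks down, which is exactly why the analysis of \cite{CasasMateosRoesch2019} is needed.
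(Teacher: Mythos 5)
Your proposal is correct, and for items i) and iv) it follows essentially the same path as the paper: Hölder plus the Sobolev embedding $\Hen\hookrightarrow L^r(\Omega)$ with $1/r=1/2-1/q$ to control the convection term, smoothness because $e$ is an affine-plus-bilinear map, and the implicit function theorem for the control-to-state map (the paper phrases the (weak, strong)-to-weak continuity more abstractly, as a general property of bounded bilinear maps, but your explicit splitting of the convection term is the same computation). The difference is in ii) and iii): the paper cites \cite[Theorem 2.1]{CasasMateosRoesch2019} wholesale for both unique solvability of the state equation and bijectivity of $D_ye(y,b)$, whereas you reconstruct most of this yourself — writing $D_ye(y,b)=A_0+K_b$ with $A_0$ a Lax--Milgram isomorphism and $K_b=T_b^*\circ\grad$ compact (via $L^\infty$-truncation of $b$ and the compact embedding $\Hen\hookrightarrow\Ltwo$), invoking the Fredholm alternative to reduce bijectivity to injectivity, and then deriving ii) from iii) by noting that $e(\cdot,b)$ is affine with linear part $A_b$. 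What your route buys is a precise localization of the only genuinely nontrivial external input, namely uniqueness for the homogeneous advection equation with $b\in\Lq^N$, $c\ge 0$, and no divergence-free or smallness hypothesis, which is exactly where the standard energy test fails and where \cite{CasasMateosRoesch2019} is indispensable; the paper's version is shorter but leaves the reader to extract this from the cited theorem. Both arguments ultimately rest on the same reference, so there is no gap.
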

\begin{proof}
\hfill
\begin{enumerate}
\item [\ref{item_conv_weak_strong_continuity}] Since the affine part of $e$, which depends only on the state, is clearly well-defined, infinitely many times F-differentiable, and (weak, strong)-to-weak sequentially continuous, it remains to verify \ref{item_conv_weak_strong_continuity}
 with $e$ replaced by the bilinear part $B \colon \Hen \times \Lq^N \to \Hend$ of $e$, which is given by
\begin{equation*}
B(y,b)w \coloneqq \int_\Omega \sum_{i=1}^N  b_i(x) w(x) \partial_i y(x) \dx 
\end{equation*}
for every $(y,b,w) \in \Hen \times \Lq^N \times \Hen$. Using classical Sobolev embeddings (see \eg \cite[Theorem 4.12]{adams2003sobolev}) and the assumption on $q$,  there exists $1 < p < \infty$ with $1/q + 1/p = 1/2$ such that $\Hen$ is continuously embedded in $\Lp$. Applying H\"older's inequality, we estimate
\begin{multline*}
|B(y,b)w| \leq  \sum_{i=1}^N \| b_i \|_\Lq  \| \partial_i y \|_{\Ltwo} \| w \|_{\Lp} \\ \leq  C \| b \|_{\Lq}  \| y \|_{\He} \| w \|_{\He},
\end{multline*}
for a suitable constant $C >0$. This proves that $B$ is well-defined and continuous. Since bilinear continuous functions are always infinitely many times F-differentiable and (weak, strong)-to-weak continuous, this concludes the proof of \ref{item_conv_weak_strong_continuity}.
\item [\ref{item:uniqueness_sol_conv}--\ref{item:uniqueness_sol_linearized_conv}]This follows from \cite[Theorem 2.1]{CasasMateosRoesch2019}.
\item [\ref{item:cont_F_differentiability}] Using the first three assertions, the claim follows from the implicit function theorem (see \eg \cite[Theorem 4.7.1]{Cartan1971}).
\end{enumerate}

\end{proof}
\begin{corollary}\label{cor:stability_of_conv_state}
Let $e$ be as in \eqref{eq:advection} and let $(N,s)$ satisfy \ref{assumption:convection_relation_dim_s}. Then
\begin{enumerate}[label=\roman*)]
\item $e$ is infinitely many times continuously F-differentiable and (weak, weak)-to-weak sequentially continuous as a mapping from $\Hen \times \Hs^N$ to $\Hend$,\label{item_conv_weak_continuity}
\item for every $b \in \Hs^N$ there is a unique $y(b) \in \Hen$ such that $e(y(b),b)=0$,\label{item:conv_uniqueness_sol}
\item  $D_y e(y,b)$ is bijective for every $(y,b) \in \Hen \times \Hs^N$,
\item the mapping $b \mapsto y(b)$ such that $e(y(b),b)=0$ is continuously F-differentiable as a mapping from $\Hs^N$ to $\Hend$,\label{item:conv_cont_diffb}
\item the mapping $b \mapsto y(b)$ such that $e(y(b),b)=0$ is Lipschitz continuous as a mapping from $\Hs^N$ to $\Hen$ on every bounded subset of $\Hs^N$. \label{item:conv_sol_lipschitz}

\end{enumerate}

\end{corollary}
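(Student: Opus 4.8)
The plan is to deduce each assertion of \Cref{cor:stability_of_conv_state} from the corresponding assertion of \Cref{prop:Lq_stability_state_equation_examplemulti} by exploiting the compact embedding $\Hs \hookrightarrow \hookrightarrow \Lq$. First I would fix $q$: under \ref{assumption:convection_relation_dim_s}, standard fractional Sobolev embeddings (see \eg \cite{Nezza2012} or \cite[Theorem 1.4.4.1]{Grisvard1986}) guarantee that $\Hs(\Omega)$ embeds compactly into $\Lq(\Omega)$ for some $q$ satisfying the hypotheses of \Cref{prop:Lq_stability_state_equation_examplemulti} (namely $q>2$ when $N\in\{1,2\}$, $q\ge 3$ when $N=3$); indeed when $N\le 2$ any $q<\infty$ works for $s\in(0,1)$, and when $N=3$ the restriction $s>1/2$ is exactly what is needed to push the Sobolev exponent $2N/(N-2s)$ past $3$. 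Then $e$, viewed on $\Hen\times\Hs^N$, is the composition of the continuous linear injection $\Hs^N\hookrightarrow\Lq^N$ with the map $e$ on $\Hen\times\Lq^N$, and the solution map $b\mapsto y(b)$ likewise factors through this injection.

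For \ref{item_conv_weak_continuity}: smoothness of $e$ on $\Hen\times\Hs^N$ follows since it is the composition of a bounded linear map with the $C^\infty$ map of \Cref{prop:Lq_stability_state_equation_examplemulti}\ref{item_conv_weak_strong_continuity}, and the chain rule applies. For the $(\text{weak},\text{weak})$-to-weak sequential continuity, note that if $b^n\rightharpoonup b$ in $\Hs^N$ then, by compactness of $\Hs\hookrightarrow\hookrightarrow\Lq$, we get $b^n\to b$ strongly in $\Lq^N$; combined with $y^n\rightharpoonup y$ in $\Hen$, the $(\text{weak},\text{strong})$-to-weak continuity from \Cref{prop:Lq_stability_state_equation_examplemulti}\ref{item_conv_weak_strong_continuity} yields $e(y^n,b^n)\rightharpoonup e(y,b)$ in $\Hend$. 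Items \ref{item:conv_uniqueness_sol} and the bijectivity of $D_y e(y,b)$ are immediate from parts \ref{item:uniqueness_sol_conv} and \ref{item:uniqueness_sol_linearized_conv} of the proposition, since $\Hs^N\subseteq\Lq^N$. Item \ref{item:conv_cont_diffb}, continuous F-differentiability of $b\mapsto y(b)$ as a map $\Hs^N\to\Hend$ — note the weaker target space $\Hend$ here, which I would flag — again follows from the implicit function theorem or simply by precomposing the $C^1$ map $\Lq^N\to\Hen\hookrightarrow\Hend$ of \Cref{prop:Lq_stability_state_equation_examplemulti}\ref{item:cont_F_differentiability} with the continuous linear inclusion $\Hs^N\hookrightarrow\Lq^N$.

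The only assertion requiring genuine work is \ref{item:conv_sol_lipschitz}, local Lipschitz continuity of $b\mapsto y(b)$ from $\Hs^N$ into $\Hen$ on bounded sets. Here I would argue directly: given $b_1,b_2$ in a bounded set $\mathcal B\subset\Hs^N$ with states $y_1=y(b_1)$, $y_2=y(b_2)$, subtract the weak formulations $e(y_1,b_1)w=e(y_2,b_2)w=0$ and test with $w=y_1-y_2$. One obtains
\begin{equation*}
\int_\Omega |\grad(y_1-y_2)|^2 + c(y_1-y_2)^2 \dx = -\int_\Omega b_1\cdot\big((y_1-y_2)\grad y_1\big)\dx + \int_\Omega b_2\cdot\big((y_1-y_2)\grad y_2\big)\dx,
\end{equation*}
which after adding and subtracting $\int_\Omega b_1\cdot\big((y_1-y_2)\grad y_2\big)$ splits into a term controlled using the boundedness of $b_1$ in $\Lq^N$ and a term $\int_\Omega (b_1-b_2)\cdot\big((y_1-y_2)\grad y_2\big)$. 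Using the Hölder triple $1/q+1/p=1/2$ with $\Hen\hookrightarrow\Lp$, and the a priori bound $\|y_2\|_{\He}\le C(\mathcal B)$ from part \ref{item:uniqueness_sol_conv}, the critical bilinear term is estimated by $C\|b_1-b_2\|_{\Lq^N}\|y_1-y_2\|_{\He}\|y_2\|_{\He}$, and the nuisance term $\int_\Omega b_1\cdot\big((y_1-y_2)\grad(y_1-y_2)\big)$ is absorbed provided the $\Lq$-norm of $b_1$ times an interpolation/embedding constant is small — which is \emph{not} automatic, since we do not assume $b$ small. The key point, and the main obstacle, is that this absorption is exactly what \cite[Theorem 2.1]{CasasMateosRoesch2019} handles: their well-posedness theory provides the necessary uniform coercivity of the linearized operator $D_y e(y(b),b)$ on bounded sets of $b$, so rather than absorbing by hand one invokes bijectivity of $D_y e$ with norm of the inverse bounded uniformly over $\mathcal B$ (again a consequence of \cite[Theorem 2.1]{CasasMateosRoesch2019} together with part \ref{item:uniqueness_sol_linearized_conv}), giving $\|y_1-y_2\|_{\He}\le \|D_y e(y_1,b_1)^{-1}\|\,\|e(y_1,b_1)-e(y_1,b_2)\|_{\Hend}\le C(\mathcal B)\|b_1-b_2\|_{\Lq^N}\le C(\mathcal B)\|b_1-b_2\|_{\Hs^N}$, where the last step uses the continuous embedding $\Hs\hookrightarrow\Lq$. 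This is the Lipschitz estimate claimed.
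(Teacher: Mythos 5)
Your treatment of items \ref{item_conv_weak_continuity}--\ref{item:conv_cont_diffb} coincides with the paper's: both reduce to \Cref{prop:Lq_stability_state_equation_examplemulti} via the compact embedding $\Hs \hookrightarrow \Lq$ for a suitable $q$, and your identification of the admissible exponents is correct. For item \ref{item:conv_sol_lipschitz} you take a genuinely different route. The paper does not test the difference of the weak formulations at all; it simply observes that $M \coloneqq \sup_{b \in B_r(\hat b)} \| D y(b) \|_{\m{L}(\Lq^N,\He)}$ is finite --- because $b \mapsto Dy(b)$ is continuous on $\Lq^N$ and the image of the ball under the compact embedding has compact closure --- and then invokes the mean value inequality \cite[Theorem 3.3.2]{Cartan1971} to get the Lipschitz constant $C_{s,q}M$ directly. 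Your perturbation argument, $\|y_1-y_2\|_{\He} \leq \|D_y e(\cdot,b_2)^{-1}\|\,\|e(y_1,b_1)-e(y_1,b_2)\|_{\Hend}$ (exact here since $e$ is affine in $y$), reaches the same conclusion and has the merit of making the mechanism explicit, but note that the two uniformity claims you lean on --- $\sup_{b\in\mathcal B}\|D_y e(\cdot,b)^{-1}\|<\infty$ and $\sup_{b\in\mathcal B}\|y(b)\|_{\He}<\infty$ --- are \emph{not} immediate from \cite[Theorem 2.1]{CasasMateosRoesch2019}, which (as the paper itself uses it in \Cref{lem:boundedness_of_solution_op_lin_pde}) only yields a constant depending on the individual $b$. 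To close this you need exactly the device the paper uses for $M$: continuity of $b\mapsto D_y e(\cdot,b)$ (and of inversion on the invertible operators) together with compactness of the closure of $\mathcal B$ in $\Lq^N$ and pointwise bijectivity from \Cref{prop:Lq_stability_state_equation_examplemulti}. With that supplement your argument is complete; your preliminary energy-testing sketch correctly diagnoses why a bare absorption argument fails without smallness of $b$, and is rightly abandoned.
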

\begin{proof}
If $N \in \{1,2\}$, then for all $s \in (0,1)$ there is $q > 2$ such that $\Hs$ is compactly embedded in $\Lq$. If $N=3$ and $s \in (1/2,1)$, then there is $q \geq 3$ such that $\Hs$ is compactly embedded in $\Lq$. Combining this observation with \Cref{prop:Lq_stability_state_equation_examplemulti}, the first four assertions follow easily. To prove  \ref{item:conv_sol_lipschitz}, it suffices to prove that the mapping is Lipschitz continuous on every ball $B_r(\hat{b}) \coloneqq \{ b \in \Hs^N  \mid \| b - \hat{b} \|_{\Hs} \leq r$  \} with radius $r> 0$ and center $\hat{b} \in \Hs^N$. Observe that due to \ref{item:conv_cont_diffb} and the compactness of the embedding of $\Hs^N$ into $\Lq^N$ we have  $M \coloneqq \sup_{b \in B_x(r)} \| D y(b) \|_{\m{L}(\Lq^N,\He)}$ is finite. Moreover, it is easy to see that
 \begin{equation*}
 \| D y(b) \|_{\m{L}(\Hs,\He)} \leq C_{s,q} \| D y(b) \|_{\m{L}(\Lq,\He)} \leq C_{s,q} M \quad \text{for all } b \in B_x(r)
\end{equation*}
where $C_{s,q}$ denotes the embedding constant of $\Hs$ into $\Lq$. It now follows from \cite[Theorem 3.3.2]{Cartan1971} that $b \mapsto y(b)$ is Lipschitz continuous on $B_x(r)$ (with Lipschitz constant bounded by $C_{s,q} M$).
\end{proof}
The following technical result is needed for the existence proof in \Cref{prop:existence_of_solutions_llp_advection}.
\begin{lemma}\label{prop:partial_derivative_of_Hen_zero_implies_zero}
Let $y \in \Hen$ and $v \in \R^N \setminus \{0\}$ be such that $v \cdot \grad y = 0$ \ale on $\Omega$. Then $y = 0$.
\end{lemma}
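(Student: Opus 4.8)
The plan is to reduce the statement, by passing to the extension of $y$ by zero, to the elementary fact that a function in $H^1(\R^N)$ whose derivative in one fixed direction vanishes everywhere must be independent of that coordinate, and then to exploit that this extension has compact support because $\Omega$ is bounded.

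First I would extend $y$ by zero: let $\tilde y \colon \R^N \to \R$ be the function which equals $y$ on $\Omega$ and $0$ on $\R^N \setminus \Omega$. Since $y \in \Hen$ is an $H^1$-limit of functions in $\Ccinfty$ whose zero extensions belong to $\CcinftyRn$, the zero-extension map is an isometry from $\Hen$ into $H^1(\R^N)$; hence $\tilde y \in H^1(\R^N)$, its weak gradient is the zero extension of $\grad y$, and $\supp \tilde y \subseteq \overline{\Omega}$ is compact. In particular $v \cdot \grad \tilde y = 0$ almost everywhere on $\R^N$. After an orthogonal change of variables in $\R^N$ carrying $v/|v|$ to the first coordinate vector — which preserves both membership in $H^1(\R^N)$ and compactness of the support — we may assume $v = e_1$, so that $\partial_1 \tilde y = 0$ a.e.\ in $\R^N$.

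Next I would conclude that $\tilde y$ does not depend on $x_1$. Writing $x = (x_1, x')$ with $x' \in \R^{N-1}$ and mollifying with a standard mollifier $\rho_\varepsilon$, the function $\tilde y_\varepsilon \coloneqq \tilde y * \rho_\varepsilon$ is smooth, has compact support, and satisfies $\partial_1 \tilde y_\varepsilon = (\partial_1 \tilde y) * \rho_\varepsilon \equiv 0$; hence $\tilde y_\varepsilon(x_1,x')$ is independent of $x_1$. Since $\tilde y_\varepsilon$ has compact support, there is $R>0$ with $\tilde y_\varepsilon(x_1,x') = 0$ for $|x_1| > R$, and independence of $x_1$ then forces $\tilde y_\varepsilon \equiv 0$ for all small $\varepsilon$. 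Letting $\varepsilon \to 0$ yields $\tilde y = 0$ in $L^2(\R^N)$, and therefore $y = 0$. (Equivalently, taking Fourier transforms: $i\,(v\cdot\xi)\,\widehat{\tilde y}(\xi) = \widehat{v \cdot \grad \tilde y}(\xi) = 0$, so $\widehat{\tilde y}$ is supported on the null set $\{\xi : v\cdot\xi = 0\}$ and hence vanishes.)

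There is no serious obstacle here; the only points that deserve a sentence of justification are the two standard facts used above — that the zero extension of an element of $\Hen$ lies in $H^1(\R^N)$ with the expected weak gradient, and that a smooth function with vanishing $x_1$-derivative is genuinely independent of $x_1$ — after which the boundedness of $\Omega$ completes the argument.
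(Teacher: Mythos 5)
Your proposal is correct and follows essentially the same route as the paper: extend $y$ by zero to $\tilde y \in H^1(\R^N)$ with compact support, mollify, observe that the mollification has vanishing derivative in the direction $v$, and conclude from compact support that each mollification vanishes identically, hence $\tilde y = 0$ in $L^2$. Your rotation to align $v$ with $e_1$ is a cosmetic variant of the paper's direct integration along $v$, and your parenthetical Fourier argument is a valid (and even slightly more general) shortcut, but the substance of the argument is the same.
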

\begin{proof}
Let $\tilde{y}$ denote the zero extension of $y$ to the complement of $\Omega$ in $\R^N$. It is well-known, see \eg \cite[Lemma 3.27 on p. 71]{adams2003sobolev},  that $\tilde{y} \in H^1(\R^N)$ and
\begin{equation*}
\grad \tilde{y}(x) = 0 \quad \text{\ale on } \R^N \setminus \Omega.
\end{equation*}
 Consequently, we have $v \, \cdot \, \grad \tilde{y}(x) = 0$ \ale on $\R^N$. Let $(\rho^n)$ in $C^\infty(\R^N)$ be a sequence of mollifiers as defined in \cite[p. 109] {Brezis2010}. Define the sequence $(w^n)$ by $w^n \coloneqq \rho^n \conv \tilde{y}$ for every $n \in \N$. It follows from \cite[Proposition 4.20 on p. 107 and Lemma 9.1 on p. 266]{Brezis2010} that $w^n \in C^\infty(\R^N)$ and
\begin{equation*}
v \cdot \grad w^n =  \rho^n \conv (v \cdot \grad \tilde{y}) = 0.
\end{equation*} 
Moreover, $w^n$ has compact support, since $\rho^n$ and $\tilde{y}$ have compact support (see \cite[Proposition 4.18 on p.106]{Brezis2010}). Consequently, for arbitrary $x \in \R^N$ there exists $\alpha > 0$ such that $x + \alpha v \notin \text{supp} (w^n)$. We have
\begin{equation*}
0 = \int_0^\alpha v \cdot \grad w^n ( x + s v ) \, \mathrm{d} s = w^n(x+\alpha v ) - w^n(x) =  - w^n(x).
\end{equation*}
Since $x \in \R^N$ was arbitrary, this proves that $w^n$ is zero on $\R^N$. Since $(w^n)$ also converges to $\tilde{y}$ in $\Ltworn$ as $n \to \infty$ (see \cite[Theorem 4.22 on p. 109]{Brezis2010}) this implies that $y$ is zero. 
\end{proof}

\begin{proposition}\label{prop:existence_of_solutions_llp_advection}
Let $(N,s)$ satisfy \ref{assumption:convection_relation_dim_s}. Then \eqref{prob:problem_convection} has a solution if and only if there exists $(y,b) \in \Fad$ such that
\begin{equation}\label{eq:cond_existene_solutions_inv_convection}
\| y - \ydelta \|_\Ltwo^2 + |b|_{\sigma,s}^2  \leq \| \ydelta\|_\Ltwo^2.
\end{equation}
\end{proposition}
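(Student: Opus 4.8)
The plan is to use the direct method, with the crucial structural observation that the infimum $m := \inf\{\|y-\ydelta\|_{\Ltwo}^2 + |b|_{\sigma,s}^2 : (y,b)\in\Fad\}$ — a genuine infimum over a nonempty set, since $y(b)$ exists for every $b\in\Hs^N$ by \Cref{cor:stability_of_conv_state} — always satisfies $m\le\|\ydelta\|_{\Ltwo}^2$. Once this is shown the equivalence is nearly immediate: if $(y^*,b^*)$ solves \eqref{prob:problem_convection} then $\|y^*-\ydelta\|_{\Ltwo}^2 + |b^*|_{\sigma,s}^2 = m\le\|\ydelta\|_{\Ltwo}^2$, which is \eqref{eq:cond_existene_solutions_inv_convection}; conversely, if \eqref{eq:cond_existene_solutions_inv_convection} holds for some $(y_0,b_0)\in\Fad$, then either $m=\|\ydelta\|_{\Ltwo}^2$, in which case $m\le\|y_0-\ydelta\|_{\Ltwo}^2+|b_0|_{\sigma,s}^2\le\|\ydelta\|_{\Ltwo}^2=m$ forces $(y_0,b_0)$ to attain $m$ and hence to solve \eqref{prob:problem_convection}, or $m<\|\ydelta\|_{\Ltwo}^2$, in which case \eqref{eq:cond_existene_solutions_inv_convection} is vacuous and existence of a minimizer must be established in its own right.

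To prove $m\le\|\ydelta\|_{\Ltwo}^2$ I would fix $v\in\R^N\setminus\{0\}$ and, for $n\in\N$, let $y_n\in\Hen$ be the unique solution of $e(y_n,nv)=0$ (\Cref{cor:stability_of_conv_state}); then $(y_n,nv)\in\Fad$ and $|nv|_{\sigma,s}=0$ since $nv$ is constant. Testing with $w=y_n$, the convection term equals $\frac{n}{2} v\cdot\int_\Omega\grad(y_n^2)\dx=0$ because $y_n^2\in W^{1,1}_0(\Omega)$; together with $c\ge 0$ and Poincar\'{e}'s inequality this yields $\|\grad y_n\|_{\Ltwo}\le C\|f\|_{\Ltwo}$, so $(y_n)$ is bounded in $\Hen$. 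Dividing $e(y_n,nv)w=0$ by $n$ and letting $n\to\infty$ (all terms are $O(1/n)$ except $v\cdot\int_\Omega w\grad y_n\dx$) shows that every weak limit $\tilde y$ of a subsequence satisfies $\int_\Omega w\,(v\cdot\grad\tilde y)\dx=0$ for all $w\in\Hen$, hence $v\cdot\grad\tilde y=0$ a.e., hence $\tilde y=0$ by \Cref{prop:partial_derivative_of_Hen_zero_implies_zero}. Thus $y_n\rightharpoonup 0$ in $\Hen$ and, by compact embedding, $y_n\to 0$ in $\Ltwo$, so $\|y_n-\ydelta\|_{\Ltwo}^2+|nv|_{\sigma,s}^2\to\|\ydelta\|_{\Ltwo}^2$ and $m\le\|\ydelta\|_{\Ltwo}^2$.

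For the remaining case $m<\|\ydelta\|_{\Ltwo}^2$ I would take a minimizing sequence $(y^n,b^n)\in\Fad$ with $\|y^n-\ydelta\|_{\Ltwo}^2+|b^n|_{\sigma,s}^2\le\|\ydelta\|_{\Ltwo}^2-2\delta$ for some $\delta>0$; then $(y^n)$ is bounded in $\Ltwo$ with $\|y^n\|_{\Ltwo}\ge\|\ydelta\|_{\Ltwo}-(\|\ydelta\|_{\Ltwo}^2-2\delta)^{1/2}=:\delta'>0$, and $|b^n|_{\sigma,s}\le\|\ydelta\|_{\Ltwo}$. Writing $b^n=\bar b^n+\beta^n$ with $\bar b^n:=Q^0 b^n$ a constant field and $\beta^n:=(I-Q^0)b^n$, we have $|\beta^n|_{\sigma,s}=|b^n|_{\sigma,s}$ and $Q^0\beta^n=0$, so $(\beta^n)$ is bounded in $\Hs^N$ by \Cref{lem:wirtinger} and \Cref{cor:equivalenceofnorms}, hence in $\Lq^N$ by \ref{assumption:convection_relation_dim_s}. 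The key step is that $(\bar b^n)$ is bounded in $\R^N$: otherwise, along a subsequence with $|\bar b^n|\to\infty$, $\bar b^n/|\bar b^n|\to v$, $|v|=1$, and $\beta^n\to\beta$ in $\Lq^N$, testing $e(y^n,b^n)=0$ with $y^n$ (the constant part of $b^n$ dropping out as above) and absorbing the remaining term $\int_\Omega\beta^n\cdot(y^n\grad y^n)\dx$ by a Gagliardo--Nirenberg estimate — using that $\|y^n\|_{\Ltwo}$ is bounded — or using \Cref{prop:Lq_stability_state_equation_examplemulti} gives that $(y^n)$ is bounded in $\Hen$; dividing the state equation by $|\bar b^n|$ and passing to the limit then yields $v\cdot\grad y=0$ for the weak limit $y$, so $y=0$ by \Cref{prop:partial_derivative_of_Hen_zero_implies_zero} and $y^n\to 0$ in $\Ltwo$, contradicting $\|y^n\|_{\Ltwo}\ge\delta'$. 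With $(\bar b^n)$ bounded, $(b^n)$ is bounded in $\Hs^N$; passing to a further subsequence, $b^n\rightharpoonup b$ in $\Hs^N$, $b^n\to b$ in $\Lq^N$, and $(y^n)$ is bounded in $\Hen$, so $y^n\rightharpoonup y$ in $\Hen$ and $y^n\to y$ in $\Ltwo$. The (weak, weak)-to-weak sequential continuity of $e$ from \Cref{cor:stability_of_conv_state} gives $(y,b)\in\Fad$, and weak lower semicontinuity of $b\mapsto|b|_{\sigma,s}^2$ on $\Hs^N$ together with $y^n\to y$ in $\Ltwo$ gives $\|y-\ydelta\|_{\Ltwo}^2+|b|_{\sigma,s}^2\le m$, so $(y,b)$ solves \eqref{prob:problem_convection}.

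The hard part is really the single observation that forcing the \emph{constant} part of the convection field to infinity drives the state to $0$ in $\Ltwo$: this both pins $m\le\|\ydelta\|_{\Ltwo}^2$ and supplies the compactness of the minimizing sequence, and it rests on combining the homogeneous Dirichlet condition (which removes the constant convection term from the energy identity) with \Cref{prop:partial_derivative_of_Hen_zero_implies_zero}. The accompanying technical nuisance is the uniform $\Hen$ bound on $y^n=y(b^n)$ along a subsequence with $\|b^n\|_{\Hs^N}\to\infty$: the natural test function $y^n$ leaves the non-small term $\int_\Omega\beta^n\cdot(y^n\grad y^n)\dx$, which must be absorbed via Gagliardo--Nirenberg interpolation (exploiting that $\|y^n\|_{\Ltwo}$ is bounded and that under \ref{assumption:convection_relation_dim_s} the exponent $q$ may be chosen in the subcritical range) or deduced from the Casas--Mateos--R\"{o}sch well-posedness estimates recalled in \Cref{prop:Lq_stability_state_equation_examplemulti}.
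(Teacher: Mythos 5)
Your proposal is correct and follows essentially the same route as the paper: the blow-up of constant convection fields combined with \Cref{prop:partial_derivative_of_Hen_zero_implies_zero} to pin the infimum below $\|\ydelta\|_\Ltwo^2$, the splitting $b^n=\bar b^n+\beta^n$ with \Cref{lem:wirtinger} controlling the mean-zero part, the contradiction argument for the constant part, and the passage to the limit via weak continuity of $e$ and weak lower semicontinuity of the cost. The only genuine deviation is the uniform $\He$ bound on the states: the paper uses a normalization--contradiction argument with $z^n=y^n/\|y^n\|_{\Lp}$, whereas you absorb $\int_\Omega\beta^n\cdot(y^n\grad y^n)\dx$ by Gagliardo--Nirenberg interpolation exploiting the subcritical choice of $p$ under \ref{assumption:convection_relation_dim_s}; this is a valid and arguably more direct alternative.
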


\begin{proof}
To prove that the existence of $(y,b) \in \Fad$ satisfying \eqref{eq:cond_existene_solutions_inv_convection} is necessary for existence of solutions, it suffices to show that
\begin{equation}\label{eq:exist_proof_conv_inf_est}
\inf_{(y,b) \in \Fad } \| y - \ydelta \|_\Ltwo^2 + |b|_{\sigma,s}^2  \leq \| \ydelta\|_\Ltwo^2.
\end{equation}
To do this, let $(b^n)$ be a sequence of constant functions in $\Hs^N$ such that $\|b^n\|_{\Hs}$ diverges to $\infty$ as $n \to \infty$. Interpreting $(b^n)$ as a sequence of vectors in $\R^N$, we have $|b^n| \to \infty$. Using that $\text{div} \, {b^n} = 0$ for every $n \in \N$, it is straightforward to prove that $(y^n)$ is bounded in $\He$. Let $v^n = b^n/|b^n|$. Then there exists subsequences of $(y^n)$ and $(v^n)$, again denoted by $(y^n)$ and $(v^n)$, such that
\begin{equation*}
y^n \rightharpoonup y \text{ in } \Hen \quad \text{and} \quad v^n \to v \text{ in } \R^N.
\end{equation*}
Let $\phi \in \Testfunctions$ be arbitrary. Testing $e(y^n, b^n)$ with $w^n = \phi / |b^n|$ yields
\begin{equation*}
\int_\Omega v \cdot (\phi \grad y) \mathrm{d}x = \lim_{n \to \infty} \int_\Omega v^n \cdot (\phi \grad y^n) \mathrm{d}x  =  \lim_{n \to \infty}   -\frac{1}{|b^n|} \int_\Omega \grad y^n \cdot \grad \phi + c y^n\phi - f \phi \mathrm{d}x= 0.
\end{equation*} 
Consequently, by the fundamental lemma of the calculus of variations it follows that $v \cdot \grad y = 0$ \ale on $\Omega$. By \Cref{prop:partial_derivative_of_Hen_zero_implies_zero}, we deduce that $y = 0$. It follows that 
\begin{equation}
\inf_{(y,b) \in \Fad } \| y - \ydelta \|_\Ltwo^2 + |b|_{\sigma,s}^2  \leq \lim_{n \to \infty} \|y^n - \ydelta\|_\Ltwo^2  = \| \ydelta \|_\Ltwo^2,
\end{equation}
where we use that for every $n \in \N$ we have $|b^n|_{\sigma,s} = 0$ (since $b^n$ is constant). This proves \eqref{eq:exist_proof_conv_inf_est}, which in turn implies that the existence of $(y,b) \in \Fad$ satisfying \eqref{eq:cond_existene_solutions_inv_convection} is necessary for the existence of solutions to \eqref{prob:problem_convection}. 

We now prove that the existence of $(y,b) \in \Fad$ satisfying \eqref{eq:cond_existene_solutions_inv_convection} is also sufficient to guarantee existence of solutions. It follows from \Cref{cor:stability_of_conv_state} \ref{item:conv_uniqueness_sol} that the feasible set $\Fad$ is nonempty. Consequently, we can take a minimizing sequence $(y^n, b^n)$ in $\Fad$ to \eqref{prob:problem_convection}. We divide the proof into three steps. 
\begin{enumerate}
\item In the first step we prove that $(y^n)$ is bounded in $\He$. Since $(y^n, b^n)$ is a minimizing sequence to \eqref{prob:problem_convection}, using the cost functional in \eqref{prob:problem_convection}, it can be easily derived that
\begin{equation}\label{eq:bounded_bn2}
(|b^n|_{\sigma,s}) \text{ and } (\| y^n \|_\Ltwo) \text{ are bounded.}
\end{equation}
For every $n \in \N$  we can write $b^n$ as the sum of a constant function and a function with mean value zero, \ie $b^n = b_1^n + b_2^n$ in $\Pi^0(\Omega)) + \Pi^0(\Omega)^\perp$. It then follows from \eqref{eq:bounded_bn2}, the fact that $|b^n|_{\sigma,s}=|b_2^n|_{\sigma,s}$ for all $n \in \N$, \Cref{lem:wirtinger}, and \Cref{cor:equivalenceofnorms} that $(b_2^n)$ is bounded in $\Hs^N$. Using the chain rule and integration by parts we moreover have
\begin{equation}\label{eq:vanishing_constant_part}
\int_\Omega b_1^n \cdot (y^n \grad y^n ) \, \dx = \frac{1}{2} \int_\Omega b_1^n \cdot ( \grad (y^n)^2 ) \, \dx =  \frac{1}{2} \int_{\partial \Omega}  y^2 (b_1^n \cdot \boldsymbol{\nu}) \,  \mathrm{d}^{N-1} = 0,
\end{equation}
since $b_1^n$ is constant and $y \in \Hen$. Using \eqref{eq:vanishing_constant_part}, testing $e(y^n,b^n) = 0$ with $y^n$ yields
\begin{equation*}
\int_\Omega |\grad y^n |_2^2 \, \dx = - \int_\Omega \left( b_2^n \cdot (y^n \grad y^n) + y^2 - f y^n \right) \dx.
\end{equation*}
Applying Poincar\'e's and H\"older's inequality, and using that $(y^n)$ is bounded in $\Ltwo$, we deduce that
\begin{multline}\label{eq:estimate1_ex_sol_convection}
c \| y^n \|_\He^2 \leq \int_\Omega |\grad y^n |_2^2 \dx  = - \int_\Omega \left( b_2^n \cdot (y^n \grad y^n) + c y^2 - f y^n \right) \dx \\ \leq \| b_2^n \|_{\Lq} \| y^n \|_\Lp \| y^n \|_\He  + C,
\end{multline}
where $c,C>0$ are suitably chosen constants and $1 < p, q < \infty$ are such that $1/p+1/q=1/2$, $\Hs$ is compactly embedded in $\Lq$, and $\Hen$ is compactly embedded in $\Lp$. If $(y^n)$ is bounded in $\Lp$, then it follows from \eqref{eq:estimate1_ex_sol_convection}  that $\| y^n \|_\He$ is bounded and the first step is finished. If $(y^n)$ is not bounded in $\Lp$, it has a subsequence, again denoted by $(y^n)$, such that $\| y^n \|_\Lp$ tends to $\infty$ as $n \to \infty$. If we define $z^n \coloneqq \frac{y^n}{\|y^n\|_\Lp}$,
then $\| z^n  \|_\Lp = 1$ and
\begin{equation*}
c \| z^n \|_\He^2 \leq \| b_2^n \|_\Lq \| z^n \|_\He + D^n,  
\end{equation*}
where $D^n \to 0$  as $n \to \infty$. Dividing this inequality by $\| z^n \|_\He$, it follows that $(z^n)$ is bounded in $\He$. Consequently, there exists a subsequence again denoted by $(z^n)$ such that $z^n \rightharpoonup z$ in $\Hen$. Since $\Hen$ is compactly embedded in $\Lp$ we have $z^n \to z$ in $\Lp$. It follows that $\| z \|_\Lp = 1$. However since $(y^n)$ is bounded in $\Ltwo$ we also have
\begin{equation*}
\| z^n \|_\Ltwo = \| y^n \|_\Ltwo (\| y^n \|_\Lp)^{-1} \to 0 \quad \text{as } n \to \infty,
\end{equation*}
which implies that $z = 0$. This is a contradiction, and consequently $(y^n)$ must be bounded in $\He$. Hence, $(y^n)$ has a subsequence, again denoted by $(y^n)$ such that $y^n \rightharpoonup y $ in $\Hen$ for some $y \in \Hen$.
\item We now prove that $(b_1^n)$ has a bounded subsequence. Here we interpret the sequence of constant functions $(b_1^n)$ as a sequence of vectors in $\R^N$. We argue by contradiction and assume that there is a subsequence, again denoted $(b_1^n)$, such that $b_1^n \neq 0$ for all $n \in \N$ and $|b_1^n| \to \infty$ as $n \to \infty$. If we let $v^n \coloneqq b_1^n /|b_1^n|$, then $(v^n)$ has a subsequence converging to some $v \in \R^N$. Let $\phi \in \Testfunctions$ be arbitrary. Testing $e(y^n,b^n)$ with $w^n \coloneqq \frac{\phi}{|b_1^n|}$ we deduce 
\begin{multline*}
\int_\Omega v \cdot (\phi \grad y) = \lim_{n \to \infty} \int_\Omega v^n \cdot (\phi \grad y^n)\\  =  \lim_{n \to \infty}  - \frac{1}{|b_1^n|} \int_\Omega \grad y^n \cdot \grad \phi + b_2^n \cdot ( \phi \grad y^n)  +  c y^n w - f w \dx = 0.
\end{multline*} 
Consequently, by the fundamental lemma of the calculus of variations it follows that $v \cdot \grad y = 0$ \ale in $\Omega$.
By \Cref{prop:partial_derivative_of_Hen_zero_implies_zero}, we deduce that $y = 0$. Consequently, we must have
\begin{equation}\label{eq:inf_larger_conv_ex}
\inf_{(y,b) \in \Fad} \| y - \ydelta \|_2^2 + |b|_{\sigma,s}^2 =              \lim_{n \to \infty} \| y^n - \ydelta \|_\Ltwo^2 + |b^n|_{\sigma,s}^2  \geq \| \ydelta \|_\Ltwo^2.   
\end{equation}
Now if \eqref{eq:inf_larger_conv_ex} really holds, then existence of solutions follows from \eqref{eq:cond_existene_solutions_inv_convection} and the proof is finished. If \eqref{eq:inf_larger_conv_ex} does not hold, 
then our assumption must have been wrong and consequently $(b_1^n)$ must be bounded.

\item In the first two steps we have established that either existence of solutions holds trivially or $(y^n,b^n)$ is bounded in $\Hen \times \Hs^N$. In the second case, the sequence $(y^n, b^n)$ has a weak accumulation point $(y,b)$ in $\Hen \times \Hs^N$. The (weak,weak)-to-weak sequential continuity established in \Cref{cor:stability_of_conv_state} \ref{item_conv_weak_continuity} implies that $e(y,b) = 0$. Since the cost functional in  \eqref{prob:problem_convection} is sequentially weakly lower semicontinuous, it follows that $(y,b)$ is a solution to \eqref{prob:learning_problem_advection}. This finishes the proof. 
\end{enumerate}
\end{proof}

\begin{remark}
A sufficient condition for  \eqref{eq:cond_existene_solutions_inv_convection} to be satisfied is that there exists a state corresponding to a constant convection coefficient, which lies in a ball with radius $\|\ydelta\|_\Ltwo$ and center $\ydelta$. 
\end{remark}

\begin{proposition}[Existence of solutions to the learning problem]
Assume that $R$ is weak$^*$ sequentially lower semicontinuous on $L^\infty((0,d))$. Then \eqref{prob:learning_problem_advection} has a solution if and only if there exist $\sigma \in \Wad$
 and $(y,b) \in \Fad$ such that 
\begin{equation}\label{eq:nec_suff_condition_bilevel}
\| y - \ydelta \|_\Ltwo^2 + | b |_{\sigma,s}^2 \leq \| \ydelta \|_\Ltwo^2.
\end{equation}
\end{proposition}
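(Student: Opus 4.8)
The plan is to prove the two implications separately, with the necessity of \eqref{eq:nec_suff_condition_bilevel} being the easy direction and the sufficiency being the substantive one that reduces to the existence result already established for the lower level problem.

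\emph{Necessity.} Suppose \eqref{prob:learning_problem_advection} has a solution $(\sigma^*, y^*, b^*)$. Then by definition $(\sigma^*, y^*, b^*) \in \mF$, which in particular means that $(y^*, b^*)$ solves \eqref{prob:problem_convection} for $\sigma = \sigma^*$. By \Cref{prop:existence_of_solutions_llp_advection}, the existence of a solution to \eqref{prob:problem_convection} for this fixed $\sigma = \sigma^*$ forces the existence of $(y,b) \in \Fad$ with $\| y - \ydelta \|_\Ltwo^2 + |b|_{\sigma^*,s}^2 \leq \|\ydelta\|_\Ltwo^2$. Taking $\sigma = \sigma^*$ together with this pair gives exactly \eqref{eq:nec_suff_condition_bilevel}.

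\emph{Sufficiency.} Assume there exist $\sigma \in \Wad$ and $(y,b) \in \Fad$ with $\| y - \ydelta \|_\Ltwo^2 + |b|_{\sigma,s}^2 \leq \|\ydelta\|_\Ltwo^2$. First, \Cref{prop:existence_of_solutions_llp_advection} guarantees that \eqref{prob:problem_convection} has a solution for this $\sigma$, so $\mF$ is nonempty and the infimum $j^* := \inf_{(\sigma,y,b) \in \mF} \tfrac12 \|b - b^\dagger\|_\Ltwo^2 + R(\sigma)$ is finite (it is bounded below by $0$ since $R \geq 0$). Take a minimizing sequence $(\sigma^n, y^n, b^n) \in \mF$. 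Since $\Wad$ is weak$^*$ sequentially compact, pass to a subsequence with $\sigma^n \rightharpoonup^* \sigma^*$ in $L^\infty((0,d))$ for some $\sigma^* \in \Wad$; by \Cref{lem:weakstarcontinuity} the associated kernels converge weak$^*$ in $L^\infty(\Omega\times\Omega)$. Next I would extract weak limits $y^n \rightharpoonup y^*$ in $\Hen$ and $b^n \rightharpoonup b^*$ in $\Hs^N$: boundedness of $(y^n,b^n)$ comes from the same arguments as in Step 1 and Step 2 of the proof of \Cref{prop:existence_of_solutions_llp_advection} — comparing the cost at $(\sigma^n, y^n, b^n)$ against $(\sigma, y, b)$ keeps $\tfrac12\|b^n - b^\dagger\|_\Ltwo^2$, hence $\|b^n\|_\Ltwo$, bounded, the Poincaré–Wirtinger inequality \Cref{lem:wirtinger} (uniform in $\sigma^n$) together with boundedness of $(|b^n|_{\sigma^n,s})$ controls the mean-zero part of $b^n$ in $\Hs^N$, and the $\He$-bound on $y^n$ follows from testing $e(y^n,b^n)=0$ with $y^n$ exactly as in the cited proof. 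By \Cref{cor:stability_of_conv_state}\,\ref{item_conv_weak_continuity} the (weak,weak)-to-weak continuity of $e$ gives $e(y^*, b^*) = 0$, so $(y^*, b^*) \in \Fad$.

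The remaining point is to show $(\sigma^*, y^*, b^*) \in \mF$, i.e.\ that $(y^*, b^*)$ solves \eqref{prob:problem_convection} for $\sigma = \sigma^*$, and that the outer cost is lower semicontinuous. For the latter, $b^n \rightharpoonup b^*$ in $\Hs^N$ implies $b^n \to b^*$ strongly in $\Ltwo^N$ by compact embedding, so $\|b^n - b^\dagger\|_\Ltwo \to \|b^* - b^\dagger\|_\Ltwo$, and $R(\sigma^n) \liminf$-dominates $R(\sigma^*)$ by weak$^*$ lower semicontinuity of $R$. For membership in $\mF$ I would apply \Cref{lem:stability}: writing the optimality condition for the lower level problem as $\langle b_i^n, v\rangle_{\sigma^n,s} = \langle p_i^n, v\rangle$ where $p_i^n$ is the $\Hs\dual$-functional coming from the adjoint of the PDE-constraint part (bounded in the relevant dual by the established bounds), \Cref{lem:stability} yields $\lim_n |b_i^n|_{\sigma^n,s}^2 = |b_i^*|_{\sigma^*,s}^2$ for each $i$, hence $|b^n|_{\sigma^n,s}^2 \to |b^*|_{\sigma^*,s}^2$. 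Combined with $y^n \to y^*$ in $\Ltwo$, this passes the lower level cost to the limit as an equality, and then the usual argument — comparing against an arbitrary competitor $(\tilde y, \tilde b) \in \Fad$ using weak$^*$-continuity of $\sigma \mapsto |\tilde b|_{\sigma,s}^2$ — shows $(y^*, b^*)$ is optimal for $\sigma^*$. Therefore $(\sigma^*, y^*, b^*) \in \mF$ realizes the infimum $j^*$, proving existence. The main obstacle I anticipate is organizing the a priori bounds on the minimizing sequence cleanly: unlike in \Cref{prop:existence_of_solutions_llp_advection}, here $\sigma^n$ also varies, so one must check that the constants in \Cref{lem:wirtinger} and in the $\He$-estimate for $y^n$ are genuinely uniform over $\Wad$ — which they are, by \Cref{cor:equivalenceofnorms} and the fact that $\gamma_1,\gamma_2,\delta$ are fixed — and that the dichotomy "either \eqref{eq:cond_existene_solutions_inv_convection}-type trivial case or bounded sequence" from that proof still applies verbatim when $\sigma$ is allowed to move.
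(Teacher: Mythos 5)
Your proposal is correct and follows essentially the same route as the paper: necessity via the characterization of lower-level solvability in \Cref{prop:existence_of_solutions_llp_advection}, and sufficiency via the direct method, with the a priori bound on $(b^n)$ obtained exactly as you describe (upper-level cost controls the constant part in $L^2$, feasibility plus \Cref{lem:wirtinger} controls the mean-zero part) and \Cref{lem:stability} applied to the lower-level optimality system to pass $|b^n|_{\sigma^n,s}^2$ to the limit. The only step you leave vaguer than the paper is the application of \Cref{lem:stability}: it requires $p_i^n = \lambda_i^n\,\partial_i y^n$ to be bounded in a concrete $L^p(\Omega)$ whose conjugate exponent $q$ admits a compact embedding $\Hs \hookrightarrow \Lq$ (which forces the case distinction $N\in\{1,2\}$ versus $N=3$ with $s>1/2$), not merely boundedness as an $\Hs\dual$-functional.
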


\begin{proof}
As a straightforward consequence of \Cref{prop:existence_of_solutions_llp_advection} we obtain that the feasible set $\m{F}$ of \eqref{prob:learning_problem_advection} is nonempty if and only if there exist $\sigma \in \Wad$
 and $(y,b) \in \Fad$ satisfying \eqref{eq:nec_suff_condition_bilevel}. This proves that the condition above is necessary for the existence of solutions. To prove that it is also sufficient, note that if the above condition holds, then the feasible set $\m{F}$ is nonempty. Consequently, we can take a minimizing sequence $(\sigma^n,y^n, b^n)$ in $\m{F}$  to \eqref{prob:learning_problem_advection}. It follows from the minimizing sequence property that $(b^n)$ is bounded in $\Ltwo$ and from the feasibility that $(|b^n|_{\sigma^n, s}^2)$ is bounded. In combination with \Cref{cor:equivalenceofnorms}, this yields that $(b^n)$ is bounded in $\Hs^N$. By \Cref{cor:stability_of_conv_state} \ref{item:conv_sol_lipschitz}, the boundedness of $(b^n)$ in $\Hs^N$ in turn implies that $(y^n)$ is bounded in $\Hen$. Since $\Wad$ is weak$^*$ sequentially compact, it follows that $(\sigma^n, y^n, b^n)$, has a subsequence, again denoted by  $(\sigma^n, y^n, b^n)$, such that 
$(\sigma^n, y^n, b^n)$ converges to  $(\sigma, y, b) \in \Wad \times \Hen \times \Hs^N$ in the (weak$^*$, weak, weak) sense in $L^\infty((0,d)) \times \Hen \times \Hs^N$. As a consequence of \Cref{cor:stability_of_conv_state} \ref{item_conv_weak_continuity} moreover we have $(y,u) \in \Fad$. Our next aim is to prove that $(y,u)$ solves the lower level problem with $\sigma$. To do this, first note that it can be derived from the KKT conditions in \Cref{definition:KKTconditions} that for every $i \in \{1, \dots, N\}$ there exists a sequence $(\lambda_i^n)$ in $\Hen$ such that for every $n \in \N$ \begin{subequations}
\begin{align*}
[\m{L}^s(\sigma^n) b^n_i] v  + \int_\Omega v(x) \lambda^n(x) \partial_i y^n(x) \dx  &= 0 \quad \text{for all } v \in \Hs \\
\int_\Omega (y^n-\ydelta)w + \grad \lambda^n \cdot \grad w + b^n \cdot (\lambda^n \grad w) \,  + c \lambda^n w \, \mathrm{d} x &= 0 \quad \text{for all } w \in \Hen
\end{align*}
\end{subequations}
Arguing similarly as in \Cref{cor:stability_of_conv_state} \ref{item:conv_sol_lipschitz} one can prove that $(\lambda^n_i)$ is bounded in $\Hen$. Now let $p_i^n(x) \coloneqq \lambda_i^n(x) \partial_i y^n(x)$. We must now distinguish between two cases. 
\begin{enumerate}
\item If $N \in \{1,2\}$, then $(p_i^n)$ is bounded in $L^{2-\delta}(\Omega)$ for arbitrary $0 < \delta <1$. 
\item If $N=3$ and $s \in (1/2,1)$, then $(p_i^n)$ is bounded in $L^{3/2}(\Omega) $ and $\Hs$ is compactly embedded in $L^3(\Omega)$.
\end{enumerate}
In either case, however, it follows from \Cref{lem:stability} that
\begin{equation}
\lim_{n \to \infty} | b_i^n |_{\sigma^n, s} = |b_i|_{\sigma,s} \quad \text{for every } n \in \N.
\end{equation}
This implies that  
\begin{multline*}
\| y- \ydelta\|_\Ltwo^2 + |b |_{\sigma,s}^2 = \lim_{n \to \infty} \| y^n- \ydelta\|_\Ltwo^2 + |b^n |_{\sigma^n,s}^2 \\ \leq \lim_{n \to \infty} \| w^n - \ydelta \|_\Ltwo^2 + | v |_{\sigma^n,s}^2 = \| w - \ydelta \|_\Ltwo^2 + | v |_{\sigma,s}^2,
\end{multline*}
for all $(w, v) \in \Fad$. This shows that $(\sigma, y, b ) \in \m{F}$. Since by assumption the cost functional is weak$^*$-weak-weak sequentially lower semi continuous, it follows that $(\sigma, y, b )$ solves \eqref{prob:problem_convection}, which finishes the proof. 
\end{proof}

\subsubsection{Sufficient optimality conditions and uniqueness of solutions of the lower level problem}
In view of \Cref{thm:localsolution}, it is important to know under which circumstances a computed KKT point of the lower level problem is the lower level problem's unique solution and satisfies a second order optimality condition. A satisfactory answer to this question is beyond the scope of this paper. Nevertheless, in order to gain at least some insight into the problem, in the following we provide conditions which are sufficient (but not necessary) to ensure that a computed KKT point of the lower level problem is indeed its unique solution and satisfies a second order sufficient optimality condition. These conditions are essentially smallness assumptions on the adjoint state. We remark that the provided estimates are of qualitative nature and quantification would require additional arguments. Conditions similar to ours, which are sufficient for the uniqueness of solutions to a class of semilinear elliptic equations with the controls entering linearly have been discussed in \cite{Ali2016}. 

For the sake of simplicity, our discussion is limited to the case of a constant weight with value $\alpha > 0$ and $c \equiv 0$ in \eqref{eq:advection}. In this case, the lower level problem can be written as 
\begin{equation}\label{prob:conv_simplified}
\min_{(y,b) \in \Hen \times \Hs}  \| y - \ydelta \|_\Ltwo^2 + \alpha | b |_{\Hs}^2 \quad \text{subject to} \quad e(y,b) = 0.  
\end{equation}
Throughout this section $(\bar{y},\bar{b},\bar{\lambda})$ denotes a KKT point of \eqref{prob:conv_simplified}. Recall that $(\bar{y},\bar{b},\bar{\lambda})$ is said to satisfy the second order sufficient optimality conditions of \eqref{prob:conv_simplified} if there exists $\mu > 0$ such that
\begin{equation*}
D^2_{(y,b)}L(\alpha,\bar{y},\bar{b},\bar{\lambda}) [(\delta_y, \delta_b),(\delta_y, \delta_b)] \geq \mu \|(\delta_y, \delta_b) \|_{\Ltwo \times \Hs}^2 
\end{equation*} 
for all $(\delta_y, \delta_b) \in \ker D e(\bar{y},\bar{b})$. Straightforward computations show that
\begin{equation}\label{eq:Lagrange_second_simplified_convection}
D^2_{(y,b)}L(\sigma,\bar{y},\bar{b},\bar{\lambda}) [(\delta_y, \delta_b),(\delta_y, \delta_b)] = \| \delta_y \|_\Ltwo^2 + \alpha \| \delta_b \|_{\Hs}^2 + \int_\Omega \delta_b \cdot (\bar{\lambda} \grad \delta_y ) \dx.
\end{equation}
The following technical result will be needed later on.
\begin{lemma}\label{lem:boundedness_of_solution_op_lin_pde}
Let $(N,s)$ satisfy \ref{assumption:convection_relation_dim_s}. Then there is a constant $C$ (depending on $\bar{b}$) such that for all $(\delta_y,\delta_b) \in \ker D e(\bar{y},\bar{b})$ we have
\begin{equation*}
\| \delta_y \|_\He \leq C \| \grad y \|_{\Ltwo^N} \| \delta_b \|_\Hs .
\end{equation*}
\end{lemma}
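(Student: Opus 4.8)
The plan is to linearize the state equation $e(y,b)=0$ at the KKT point $(\bar y, \bar b)$ and read off from the resulting identity an a priori bound on $\delta_y$ in terms of $\delta_b$. Recall that $(\delta_y,\delta_b)\in\ker De(\bar y,\bar b)$ means precisely that $e_y(\bar y,\bar b)\delta_y + e_u(\bar y,\bar b)\delta_b = 0$ in $\Hend$, which, using the explicit form \eqref{eq:advection} with $c\equiv 0$, reads
\begin{equation*}
\int_\Omega \grad\delta_y\cdot\grad w + \bar b\cdot(w\,\grad\delta_y) + \delta_b\cdot(w\,\grad\bar y)\dx = 0\quad\text{for all } w\in\Hen.
\end{equation*}
First I would test this identity with $w=\delta_y$. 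The first term gives $\|\grad\delta_y\|_{\Ltwo^N}^2$, the second term $\int_\Omega \bar b\cdot(\delta_y\grad\delta_y)\dx = \tfrac12\int_\Omega \bar b\cdot\grad(\delta_y^2)\dx$ vanishes by integration by parts since $\delta_y\in\Hen$ (exactly as in \eqref{eq:vanishing_constant_part}, but here using that the boundary term vanishes because $\delta_y=0$ on $\partial\Omega$ — one should double-check this does not actually need $\operatorname{div}\bar b=0$; since $\delta_y^2\in W^{1,1}_0$ the term is $-\tfrac12\int_\Omega \delta_y^2\operatorname{div}\bar b$, which is where a little care is needed if $\bar b$ is only in $\Hs^N$ rather than having an $L^2$ divergence). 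To circumvent any divergence regularity issue one can instead keep the convection term and estimate it directly.

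Next I would estimate the remaining term: by Hölder's inequality with exponents $1/p+1/q=1/2$ chosen as in \Cref{prop:Lq_stability_state_equation_examplemulti} (so $\Hen\hookrightarrow\Lp$ and $\Hs\hookrightarrow\Lq$ under \ref{assumption:convection_relation_dim_s}),
\begin{equation*}
\left|\int_\Omega \delta_b\cdot(\delta_y\,\grad\bar y)\dx\right| \leq \|\delta_b\|_{\Lq^N}\,\|\delta_y\|_{\Lp}\,\|\grad\bar y\|_{\Ltwo^N} \leq C\,\|\delta_b\|_{\Hs}\,\|\delta_y\|_{\He}\,\|\grad\bar y\|_{\Ltwo^N}.
\end{equation*}
Combining this with the coercivity estimate $c\|\delta_y\|_{\He}^2\leq\|\grad\delta_y\|_{\Ltwo^N}^2$ (Poincaré, valid on $\Hen$) and, if the convection term was retained, absorbing $\|\bar b\|_{\Lq^N}\|\delta_y\|_{\Lp}\|\delta_y\|_{\He}$ — no, better: keep $\delta_y$ on one side. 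Testing with $\delta_y$ gives $\|\grad\delta_y\|^2_{\Ltwo^N} = -\int_\Omega\bar b\cdot(\delta_y\grad\delta_y) - \int_\Omega\delta_b\cdot(\delta_y\grad\bar y)$; the cleanest route is to invoke \Cref{prop:Lq_stability_state_equation_examplemulti}\ref{item:uniqueness_sol_linearized_conv}, which states $D_ye(\bar y,\bar b)\in\m L(\Hen,\Hend)$ is bijective, hence boundedly invertible by the open mapping theorem. Then directly
\begin{equation*}
\|\delta_y\|_{\He} \leq \|D_ye(\bar y,\bar b)^{-1}\|\;\|e_u(\bar y,\bar b)\delta_b\|_{\Hend},
\end{equation*}
and $\|e_u(\bar y,\bar b)\delta_b\|_{\Hend} = \sup_{\|w\|_{\He}\le1}|\int_\Omega\delta_b\cdot(w\grad\bar y)| \leq C\|\delta_b\|_{\Lq^N}\|\grad\bar y\|_{\Ltwo^N}\leq C'\|\grad\bar y\|_{\Ltwo^N}\|\delta_b\|_{\Hs}$ by the same Hölder and embedding argument, yielding the claim with $C$ depending on $\bar b$ through $\|D_ye(\bar y,\bar b)^{-1}\|$.

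The main obstacle, and the only genuinely delicate point, is tracking that all the constants are uniform in the right quantities: the constant $C$ is allowed to depend on $\bar b$ (and it does, via the norm of the inverse linearized operator $D_ye(\bar y,\bar b)^{-1}$, which depends continuously on $\bar b$ by \Cref{cor:stability_of_conv_state}), but must be independent of $(\delta_y,\delta_b)$; the operator-norm route makes this transparent. A secondary technical point is the choice of the conjugate exponents $p,q$ and verifying the compact/continuous Sobolev embeddings $\Hen\hookrightarrow\Lp$, $\Hs\hookrightarrow\Lq$ hold simultaneously under \ref{assumption:convection_relation_dim_s} for $N\in\{1,2,3\}$ — but this is precisely the bookkeeping already carried out in \Cref{prop:Lq_stability_state_equation_examplemulti} and \Cref{cor:stability_of_conv_state}, so it can simply be cited.
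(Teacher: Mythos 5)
Your final argument is correct and is essentially the paper's proof: the paper likewise identifies $(\delta_y,\delta_b)\in\ker De(\bar y,\bar b)$ with weak solutions of $-\Delta\delta_y+\bar b\cdot\grad\delta_y=-\delta_b\cdot\grad\bar y$, obtains $\|\delta_y\|_\He\leq C_1\|\delta_b\cdot\grad\bar y\|_{\Hend}$ from the a priori estimate of \cite[Theorem 2.1]{CasasMateosRoesch2019} (your bounded-inverse formulation of the same fact), and concludes with the identical H\"older--plus--Sobolev-embedding bound on $\|\delta_b\cdot\grad\bar y\|_{\Hend}$. The energy-method detour you start with is unnecessary, and you are right to abandon it, since the divergence of $\bar b\in\Hs^N$ is not controlled.
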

\begin{proof}
Note that $(\delta_y, \delta_b) \in \ker D e(\bar{y},\bar{b})$ if and only if $(\delta_y,\delta_b)$ is a weak solution to 
\begin{equation*}
- \Delta \delta_y + \bar{b} \cdot \grad \delta_y = - \delta_b \cdot \grad \bar{y} \quad \text{in } \Hend.
\end{equation*}
Consequently, by \cite[Theorem 2.1]{CasasMateosRoesch2019} there is $C_1>0$ (depending on $\bar{b}$) such that
\begin{equation}\label{eq:deltaydeltab_continv}
\| \delta_y \|_\He \leq C_1 \| \delta_b \cdot \grad \bar{y} \|_{\Hend}.
\end{equation}
Since $N$ and $s$ satisfy \ref{assumption:convection_relation_dim_s}, there are $ p,q > 1$ such that $1/p+1/q = 1/2$, and $\Hs$ and $\He$ are continuously embedded in $\Lq$ and $\Lp$, respectively. Using H\"older's inequality, it follows that 
\begin{equation}\label{eq:deltaydeltab_HoelderSov}
\| \delta_b \cdot \grad \bar{y} \|_{\Hend} \leq C_{s,p} C_{1,q} \| \grad \bar{y} \|_{\Ltwo^N} \| \delta_b \|_\Hs
\end{equation}
where $C_{s,q}$ and $C_{1,p}$ denote the embedding constants of $\Hs$ and $\He$ into $\Lq$ and $\Lp$, respectively. Combining \eqref{eq:deltaydeltab_continv} and \eqref{eq:deltaydeltab_HoelderSov} yields the desired estimate.
\end{proof}
We now formulate a criterion on the adjoint state, which implies that $(\bar{y},\bar{b},\bar{\lambda})$ satisfies the second order sufficient optimality conditions.
\begin{proposition}
Let $(N,s)$ satisfy \ref{assumption:convection_relation_dim_s},  and let $p,q>1$ be such that $1/p+1/q = 1/2$ and $\Hs$ is continuously embedded in $\Lq$. Let $(\bar{y}, \bar{b} , \bar{\lambda})$ be a KKT point of \eqref{prob:conv_simplified}. Then there exists a constant $C > 0$ (depending on $(\bar{y}, \bar{b})$) such that if
\begin{equation*}
\| \bar{\lambda} \|_\Lp < r,
\end{equation*} then $(\bar{y}, \bar{b})$ satisfies the second order sufficient optimality conditions of \eqref{prob:conv_simplified}.
\end{proposition}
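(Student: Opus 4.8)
The plan is to verify the coercivity estimate in the definition of the second order sufficient optimality condition directly from the explicit expression \eqref{eq:Lagrange_second_simplified_convection} for the Hessian of the Lagrangian. Bijectivity of $D_y e(\bar y, \bar b)$ is automatic by \Cref{cor:stability_of_conv_state}, and the multiplier $\bar\lambda$ is already provided by the KKT point, so it suffices to exhibit $\mu > 0$ such that the right-hand side of \eqref{eq:Lagrange_second_simplified_convection} satisfies
\[
\|\delta_y\|_{\Ltwo}^2 + \alpha \|\delta_b\|_{\Hs}^2 + \int_\Omega \delta_b \cdot (\bar\lambda \grad \delta_y)\dx \;\geq\; \mu\, \|(\delta_y,\delta_b)\|_{\Ltwo\times\Hs}^2
\]
for all $(\delta_y,\delta_b) \in \ker De(\bar y,\bar b)$. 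The only non-positive contribution is the cross term, and the point to keep in mind is that it contains $\grad\delta_y$, not $\delta_y$, so it cannot be absorbed into $\|\delta_y\|_{\Ltwo}^2$ alone; instead one uses the linearized state equation to trade the gradient of $\delta_y$ for $\delta_b$.

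Concretely, I would first apply Hölder's inequality with the exponents $q$, $p$, $2$ (legitimate since $1/p+1/q=1/2$), followed by the continuous embedding $\Hs \hookrightarrow \Lq$, to get
\[
\left| \int_\Omega \delta_b \cdot (\bar\lambda \grad \delta_y)\dx \right| \leq \|\delta_b\|_{\Lq}\, \|\bar\lambda\|_{\Lp}\, \|\grad\delta_y\|_{\Ltwo} \leq C_{s,q}\, \|\bar\lambda\|_{\Lp}\, \|\delta_b\|_{\Hs}\, \|\delta_y\|_{\He}.
\]
Then, since $(\delta_y,\delta_b) \in \ker De(\bar y,\bar b)$, \Cref{lem:boundedness_of_solution_op_lin_pde} yields $\|\delta_y\|_{\He} \leq C_1\, \|\delta_b\|_{\Hs}$ with $C_1$ depending only on $(\bar y,\bar b)$. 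Combining the two bounds produces a constant $\kappa = \kappa(\bar y,\bar b) \geq 0$ with
\[
\left| \int_\Omega \delta_b \cdot (\bar\lambda \grad \delta_y)\dx \right| \leq \kappa\, \|\bar\lambda\|_{\Lp}\, \|\delta_b\|_{\Hs}^2 \quad \text{for all } (\delta_y,\delta_b) \in \ker De(\bar y,\bar b).
\]

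Inserting this into \eqref{eq:Lagrange_second_simplified_convection} gives, on the kernel, $D^2_{(y,b)}L(\alpha,\bar y,\bar b,\bar\lambda)[(\delta_y,\delta_b),(\delta_y,\delta_b)] \geq \|\delta_y\|_{\Ltwo}^2 + (\alpha - \kappa\|\bar\lambda\|_{\Lp})\|\delta_b\|_{\Hs}^2$. If $\kappa = 0$ the cross term vanishes identically and the claim holds with $\mu = \min\{1,\alpha\}$ and no smallness assumption; otherwise I would set $C \coloneqq \alpha/(2\kappa)$, so that $\|\bar\lambda\|_{\Lp} < C$ forces $\alpha - \kappa\|\bar\lambda\|_{\Lp} > \alpha/2$ and hence $D^2_{(y,b)}L \geq \|\delta_y\|_{\Ltwo}^2 + (\alpha/2)\|\delta_b\|_{\Hs}^2 \geq \mu\,\|(\delta_y,\delta_b)\|_{\Ltwo\times\Hs}^2$ with $\mu \coloneqq \min\{1,\alpha/2\}$, which is precisely the required condition. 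I do not expect a genuine obstacle: the one structural ingredient — controlling $\|\delta_y\|_{\He}$ by $\|\delta_b\|_{\Hs}$ on the kernel via the linearized PDE — is already packaged in \Cref{lem:boundedness_of_solution_op_lin_pde}, and the remaining work is the elementary Hölder estimate above together with bookkeeping of which constants depend on $(\bar y,\bar b)$. One may additionally remark that in fact $\kappa>0$ always holds here, since $f \neq 0$ forces $\bar y \neq 0$ and hence $\grad\bar y \neq 0$.
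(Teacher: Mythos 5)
There is a genuine gap, and it traces back to taking the display \eqref{eq:Lagrange_second_simplified_convection} at face value. The regularizer in \eqref{prob:conv_simplified} is the squared \emph{seminorm} $\alpha|b|_{\Hs}^2$, so the quadratic part of the Hessian of the Lagrangian is $\|\delta_y\|_{\Ltwo}^2+\alpha|\delta_b|_{\Hs}^2$ with the seminorm $|\cdot|_{\Hs}$, not the full norm (the $\|\cdot\|_{\Hs}$ appearing in \eqref{eq:Lagrange_second_simplified_convection} is a typo; the paper's own proof works with $|\delta_b|_{\Hs}^2$). Since $|\cdot|_{\Hs}$ vanishes on the constants $\Pi^0(\Omega)$, your final absorption step fails: from $\|\delta_y\|_{\Ltwo}^2+\alpha|\delta_b|_{\Hs}^2-\kappa\|\bar\lambda\|_{\Lp}\|\delta_b\|_{\Hs}^2$ you cannot conclude coercivity in $\|(\delta_y,\delta_b)\|_{\Ltwo\times\Hs}$: for a nonzero constant $\delta_b$ the middle term is zero, and the expression is not bounded below by $\mu\|\delta_b\|_{\Hs}^2$ unless the $\|\delta_y\|_{\Ltwo}^2$ term compensates.

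That compensation is exactly the extra content of the paper's proof. Its Steps 1--2 establish that for every $c>0$ there is $\kappa>0$ with $\|\delta_y\|_{\Ltwo}^2+c|\delta_b|_{\Hs}^2\geq\kappa\|\delta_b\|_{\Hs}^2$ for all $(\delta_y,\delta_b)\in\ker De(\bar y,\bar b)$: one splits $\delta_b=\delta_b^1+\delta_b^2$ into its constant and mean-zero parts, controls $\|\delta_b^2\|_{\Hs}$ by $|\delta_b^2|_{\Hs}$ via the Poincar\'e--Wirtinger inequality (\Cref{lem:wirtinger}), and controls $\|\delta_b^1\|_{\Ltwo}$ by $\|\delta_y^1\|_{\Ltwo}$ using that the linear map $v\mapsto w(v)$ with $(w(v),v)\in\ker De(\bar y,\bar b)$ is injective on the finite-dimensional space $\Pi^0(\Omega)$ --- which in turn needs $\bar y\neq 0$ (guaranteed by $f\neq 0$) together with \Cref{prop:partial_derivative_of_Hen_zero_implies_zero}, and a Cauchy--Schwarz/Young balancing of the cross term $\langle\delta_y^1,\delta_y^2\rangle_{\Ltwo}$. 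Your H\"older estimate of $\int_\Omega\delta_b\cdot(\bar\lambda\grad\delta_y)\dx$ via \Cref{lem:boundedness_of_solution_op_lin_pde} coincides with the paper's Step 3 and is fine; what is missing is this kernel-of-the-seminorm argument, without which the asserted lower bound by $\mu\|(\delta_y,\delta_b)\|_{\Ltwo\times\Hs}^2$ does not follow.
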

\begin{proof}
We divide the proof into several steps.
\begin{enumerate}
\item We begin by showing that there is a constant $\kappa_1 > 0$ such that for every $v \in \Pi_0(\Omega)$ and $w \in \Hen$ with $(w,v) \in \ker D e(y,b)$ we have $\|w\|_\Ltwo^2 \geq \kappa_1 \| v \|_\Ltwo^2$. Since $\Pi_0(\Omega)$ is finite dimensional and the mapping $v \mapsto w(v)$ such that $(w(v),v) \in \ker D e (y,b)$ is linear, it suffices to show that $v \neq 0$ implies that $w(v) \neq 0$. Note that $v \cdot \grad y \neq 0 $ since otherwise $y$ would also have to be zero. Since $w$ satisfies 
\begin{equation*}
-\Delta w + b \cdot \grad w = - v \cdot \grad y,
\end{equation*}
it follows that $w$ is not equal to zero. 
 \item In the second step we prove that for every $c > 0$ there exists a constant $\kappa > 0$ such that for all $(\delta_y, \delta_b) \in \ker D e(y,u)$ it holds that
\begin{equation}\label{eq:seminorm_L2statecontrol}
\| \delta_y \|_\Ltwo^2 + c | \delta_b|_\Hs^2  \geq \kappa \| \delta_b \|_\Hs^2.
\end{equation}
We can write $\delta_b = \delta_b^1 + \delta_b^2$, where $(\delta_b^1, \delta_b^2) \in \Pi_0(\Omega) \times \Pi_0(\Omega)^\perp$. Denote by $\delta_y^1$ and $\delta_y^2$ the unique elements in $\Hen$ such that $(\delta_y^1, \delta_b^1) \in \ker D e(y,u) $ and 
$(\delta_y^2, \delta_b^2) \in \ker D e(y,u) $, respectively. By linearity we have $\delta_y = \delta_y^1 + \delta_y^2$. From the first step, we know that there exists a constant $\kappa_1 > 0$ such that  
\begin{equation*}
\|\delta_y^1 \|_\Ltwo^2 \geq \kappa_1 \|\delta_b^1 \|_\Ltwo^2.
\end{equation*}
Moreover, combining \Cref{lem:boundedness_of_solution_op_lin_pde} and \Cref{lem:wirtinger} we also have
\begin{equation*}
\|\delta_y^2 \|_\Ltwo^2 \leq \kappa_2 \|\grad y\|_{\Ltwo^N} |\delta_b^2 |_{\Hs},
\end{equation*}
where $\kappa_2>0$ is independent of $(\delta_y, \delta_b)$. For $\beta \in (0,1)$ we now estimate
\begin{multline*}
\| \delta_y \|_\Ltwo^2 + c | \delta_b |_\Hs^2 \\ = \|\delta_y^1 \|_\Ltwo^2 + \|\delta_y^2 \|_\Ltwo^2 + 2 \langle \sqrt{\beta} \delta_y^1, (1/\sqrt{\beta}) \delta_y^2 \rangle_\Ltwo  + c |\delta_b^2 |_\Hs \\ \geq (1-\beta) \| \delta_y^1 \|_\Ltwo^2 + \left(1- 1/\beta \right) \|\delta_y^2 \|_\Ltwo^2 + c | \delta_b^2 |_{\Hs} \\  \geq (1-\beta) \kappa_1 \| \delta_b^1 \|_\Ltwo^2 + \left(\left(1- 1/\beta \right) \kappa_2| \grad y \|_{\Ltwo}^2 + c \right) | \delta_b^2 |_{\Hs}^2 \\ \geq  (1-\beta) \kappa_1 \| \delta_b^1 \|_\Ltwo^2 + \left(\left(1- 1/\beta \right) \kappa_2| \grad y \|_{\Ltwo}^2 + c \right) D \| \delta_b^2 \|_{\Hs}^2.
\end{multline*}
The estimate \eqref{eq:seminorm_L2statecontrol} with $\kappa \coloneqq \min \left\{ (1-\beta) \kappa_1, \left(1- 1/\beta \right) \kappa_2| \grad y \|_{\Ltwo^N}^2 + c \right\}$ now holds for $\beta \in (0,1)$ chosen sufficiently close to $1$ in order to have $\kappa > 0$.

\item Using H\"older's inequality and  \Cref{lem:boundedness_of_solution_op_lin_pde} we obtain
\begin{multline*}
\int_\Omega \delta_b \cdot (\lambda \grad \delta_y ) \dx \leq \| \delta_b \|_\Lq \| \lambda \|_\Lp \| \grad \delta_y \|_{\Ltwo} \\ \leq C_{s,q} D \| \lambda\|_\Lp  \| \delta_b \|_\Hs^2 
 \end{multline*}
where $D \coloneqq C \| \grad y \|_\Ltwo^N$ for $C$ as in \Cref{lem:boundedness_of_solution_op_lin_pde}, and $C_{s,q}$ denotes the embedding constant of $\Hs$ into $\Lq$. Taking $c = 2  \alpha$ and $\kappa > 0$ such that \eqref{eq:seminorm_L2statecontrol} holds, we compute
\begin{multline*}
D^2_{(y,b)}L(\sigma,\bar{y},\bar{b},\bar{\lambda}) [(\delta_y, \delta_b),(\delta_y, \delta_b)]  \\ = \| \delta_y \|_\Ltwo^2 + \alpha | \delta_b |_{\Hs}^2 + \int_\Omega \delta_b \cdot (\bar{\lambda} \grad \delta_y ) \dx \\ \geq (1/2) \| \delta_y\|_\Ltwo^2 + \left(\kappa/2 - C_{s,q} D \|\lambda\|_\Lp\right) \| \delta_b \|_\Hs^2.
\end{multline*}
This proves the claim for $r = \kappa /(2 C_{s,q} D)$.
\end{enumerate}
\end{proof}

\begin{proposition}
Let $(N,s)$ satisfy \ref{assumption:convection_relation_dim_s},  and let $p,q>1$ be such that $1/p+1/q = 1/2$ and $\Hs$ is continuously embedded in $\Lq$. Assume that there exists $(y,b)\in \Fad$ such that \eqref{eq:nec_suff_condition_bilevel} holds with strict inequality. Let $(\bar{y}, \bar{b} , \bar{\lambda})$ be a KKT point of \eqref{prob:conv_simplified}. Then there exists a constant $C > 0$ such that if 
\begin{equation*}
\| \bar{\lambda} \|_\Lp < C,
\end{equation*}
then $(\bar{y}, \bar{b})$ is the unique solution to \eqref{prob:conv_simplified}.

\end{proposition}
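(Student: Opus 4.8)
The plan is to reduce \eqref{prob:conv_simplified} to an unconstrained minimisation over the convection term, to exploit the strict inequality in \eqref{eq:nec_suff_condition_bilevel} to obtain a priori bounds, and then to compare the given KKT triple with an arbitrary solution. \textbf{Reduction.} By \Cref{cor:stability_of_conv_state} the state equation $e(y,b)=0$ determines $y=y(b)$ uniquely and $C^1$-smoothly, so \eqref{prob:conv_simplified} is equivalent to minimising the reduced functional $j(b):=\|y(b)-\ydelta\|_\Ltwo^2+\alpha\,|b|_{\Hs}^2$ over $b\in\Hs^N$, and $(\bar y,\bar b,\bar\lambda)$ being a KKT point means precisely that $j'(\bar b)=0$ and $\bar\lambda$ solves the adjoint equation at $\bar b$. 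By the preceding proposition, for $\|\bar\lambda\|_{\Lp}$ below some threshold $(\bar y,\bar b)$ already satisfies the second order sufficient optimality condition and hence is a strict local solution; the whole point is to upgrade this to global uniqueness, and this is where the strict inequality in \eqref{eq:nec_suff_condition_bilevel} is used.

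\textbf{A priori bounds.} Since \eqref{eq:nec_suff_condition_bilevel} holds strictly, the proof of \Cref{prop:existence_of_solutions_llp_advection} shows that a solution exists and that the optimal value $m_\ast$ satisfies $m_\ast<\|\ydelta\|_\Ltwo^2$. Consequently every solution $(y^\ast,b^\ast)$ obeys $\alpha|b^\ast|_{\Hs}^2\le m_\ast$ and $\|y^\ast-\ydelta\|_\Ltwo^2\le m_\ast$, and the same compactness argument as in the proof of \Cref{prop:existence_of_solutions_llp_advection} (using \Cref{lem:wirtinger}, \Cref{cor:equivalenceofnorms}, \Cref{prop:partial_derivative_of_Hen_zero_implies_zero} and testing $e(y^\ast,b^\ast)=0$ with $y^\ast$) yields a bound $\|b^\ast\|_{\Hs}+\|y^\ast\|_{\He}\le B$ with $B$ independent of the particular solution. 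Since $D_ye(y^\ast,b^\ast)$ is bijective there is an associated multiplier $\lambda^\ast$, and applying the stability estimate \cite[Theorem~2.1]{CasasMateosRoesch2019} to the adjoint equation gives $\|\lambda^\ast\|_{\Lp}\le B'$ with $B'$ again uniform.

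\textbf{Comparison.} Fix an arbitrary solution $(y^\ast,b^\ast,\lambda^\ast)$ and subtract its optimality relation $\m{L}^s(\alpha)b^\ast+\lambda^\ast D_be(y^\ast,b^\ast)=0$ from the one for $(\bar y,\bar b,\bar\lambda)$; testing with $b^\ast-\bar b$ gives (up to a harmless positive constant) $\alpha|b^\ast-\bar b|_{\Hs}^2 = -\int_\Omega(b^\ast-\bar b)\cdot\big(\lambda^\ast\grad y^\ast-\bar\lambda\grad\bar y\big)\dx$. Writing the integrand as $\lambda^\ast\grad(y^\ast-\bar y)+(\lambda^\ast-\bar\lambda)\grad\bar y$ and using that $y^\ast-\bar y$ and $\lambda^\ast-\bar\lambda$ solve linearised, resp.\ linearised adjoint, elliptic equations with right-hand sides controlled by $b^\ast-\bar b$ — via an estimate of the type of \Cref{lem:boundedness_of_solution_op_lin_pde}, Hölder's inequality and the embeddings for $1/p+1/q=1/2$ — the right-hand side is bounded by $\big(c_1\|\bar\lambda\|_{\Lp}+c_2\,\tau(\bar\lambda)\big)\|b^\ast-\bar b\|_{\Hs}^2$, where $c_1,c_2$ are uniform and $\tau(\bar\lambda)$ collects the contribution of the adjoint difference. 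To replace the $\Hs$-seminorm on the left by the full norm I would split $b^\ast-\bar b$ into its mean $v\in\R^N$ and its mean-free part: the latter is controlled by \Cref{lem:wirtinger}, while $v$ is controlled through the state-equation difference — in which $y^\ast-\bar y$ solves $(-\Delta+b^\ast\cdot\grad)(y^\ast-\bar y)=-v\cdot\grad\bar y+(\text{mean-free remainder})$ — using that $\R^N\ni w\mapsto w\cdot\grad\bar y\in\Hend$ is injective by \Cref{prop:partial_derivative_of_Hen_zero_implies_zero} (note $\bar y\neq0$ because $f\neq0$) and hence, $\R^N$ being finite dimensional, bounded below: $\|w\cdot\grad\bar y\|_{\Hend}\ge\kappa|w|$. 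Once it is shown that $\tau(\bar\lambda)$ and $\|\bar\lambda\|_{\Lp}$ can be made small enough that $c_1\|\bar\lambda\|_{\Lp}+c_2\,\tau(\bar\lambda)<\alpha$, these estimates force $b^\ast=\bar b$ and therefore $y^\ast=\bar y$; since $(y^\ast,b^\ast)$ was an arbitrary solution, $(\bar y,\bar b)$ is the unique one, and the admissible $C$ is the corresponding threshold for $\|\bar\lambda\|_{\Lp}$.

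\textbf{Main obstacle.} The delicate step is the control of $\tau(\bar\lambda)$, i.e.\ of the adjoint difference $\lambda^\ast-\bar\lambda$: its linearised adjoint equation is driven both by $y^\ast-\bar y$, which is only of order $\|b^\ast-\bar b\|_{\Hs}$ and not small, and by a term involving $\lambda^\ast$, whose $\Lp$ norm is merely bounded by $B'$. Making $\tau(\bar\lambda)$ small therefore cannot be read off directly; it requires a bootstrap in which the stationarity relation, the a priori bounds and \Cref{lem:wirtinger} are first used to force $\bar b$ into the ball $\{\|b\|_{\Hs}\le B\}$ and to make its mean-free part small in terms of $\|\bar\lambda\|_{\Lp}$, after which the whole comparison identity is re-estimated. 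The second point requiring care is exactly the kernel of the $\Hs$-seminorm: since constant convection fields carry no seminorm, the mean value of $b^\ast-\bar b$ has to be captured through the non-degeneracy of the control-to-state linearisation rather than through the left-hand side of the comparison identity. The remaining work — the Hölder/Sobolev bookkeeping, the linearised stability bounds, and the final choice of $C$ — is routine given the two preceding propositions and \cite[Theorem~2.1]{CasasMateosRoesch2019}.
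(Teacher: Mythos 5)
Your comparison step is where the argument breaks down, and you have in fact put your finger on the problem yourself without resolving it. Subtracting the stationarity relations of two KKT points and testing with $b^\ast-\bar b$ produces the term $\int_\Omega (b^\ast-\bar b)\cdot(\lambda^\ast\grad y^\ast-\bar\lambda\grad\bar y)\,\dx$, and after your splitting the piece involving $\lambda^\ast$ (equivalently $\lambda^\ast-\bar\lambda$) is only \emph{bounded} by the a priori constant $B'$, not small; the hypothesis of the proposition constrains $\|\bar\lambda\|_{\Lp}$ only. The proposed bootstrap does not close: the adjoint difference is driven by $y^\ast-\bar y$, which is of order $\|b^\ast-\bar b\|_{\Hs}$ and cannot be assumed small, so $\tau(\bar\lambda)$ never becomes a small multiple of $\|b^\ast-\bar b\|_{\Hs}^2$ with a constant controlled by $\|\bar\lambda\|_{\Lp}$. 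A second, related loss is that by differentiating the reduced functional you discard the term $\|y-\bar y\|_{\Ltwo}^2$; without it the left-hand side of your identity is only the $\Hs$-seminorm, whose kernel contains the constant fields, and there is then no coercive quantity left to absorb the mean part of $b^\ast-\bar b$.

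The paper avoids both difficulties by comparing \emph{objective values} rather than stationarity conditions. Since the cost is quadratic, $F(y,b)-F(\bar y,\bar b)=F_y(\bar y,\bar b)(y-\bar y)+F_b(\bar y,\bar b)(b-\bar b)+\|y-\bar y\|_{\Ltwo}^2+\alpha|b-\bar b|_{\Hs}^2$ exactly, and since $e$ is bilinear plus affine, the constraint gives $e_y(\bar y,\bar b)[y-\bar y]+e_b(\bar y,\bar b)[b-\bar b]=-e_{yb}[y-\bar y,b-\bar b]$ for every feasible $(y,b)$. The KKT conditions at $(\bar y,\bar b,\bar\lambda)$ then convert the first-order terms into the single remainder $-\bar\lambda e_{yb}[y-\bar y,b-\bar b]$, which is bounded by $C_{s,q}D\|\bar\lambda\|_{\Lp}\|b-\bar b\|_{\Hs}^2$ — only $\bar\lambda$ ever appears, so the smallness hypothesis is exactly what is needed. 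The retained quadratic term $\|y-\bar y\|_{\Ltwo}^2$ is then combined with the seminorm via a separate compactness estimate ($\|y-\bar y\|_{\Ltwo}^2+c|b-\bar b|_{\Hs}^2\ge\kappa\|b-\bar b\|_{\Ltwo}^2$ on $\Fad$ intersected with the a priori ball, using the injectivity of $w\mapsto w\cdot\grad\bar y$ on constants that you correctly identified) to yield $F(y,b)>F(\bar y,\bar b)$ for all feasible $(y,b)\neq(\bar y,\bar b)$ once $\|\bar\lambda\|_{\Lp}<\kappa/(2C_{s,q}D)$. Your a priori bounds and your treatment of the constant directions are sound and reappear in the paper; the missing idea is to exploit the exact quadratic/bilinear structure globally instead of linearising around two critical points.
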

\begin{proof}
It follows from  \Cref{prop:existence_of_solutions_llp_advection} that  \eqref{prob:conv_simplified} has a solution. A  look at the proof of \Cref{prop:existence_of_solutions_llp_advection} reveals that under our assumption all solutions to \eqref{prob:conv_simplified} must be contained in a norm ball with finite radius in $\Hen \times \Hs$. In the following we denote this ball by $B$. The proof is heavily based on the observation that
\begin{equation}\label{eq:e_bilinear_diff}
e_y(\bar{y},\bar{b}) [y-\bar{y}] + e_b(\bar{y}, \bar{b}) [b - \bar{b}] + e_{yb}[y-\bar{y},b-\bar{b}] = 0 \quad \text{ in } \Hend,
\end{equation}
for all $(y,b) \in \Fad$, which in turn follows from the fact that $e$ is the sum of a bilinear and an affine linear function. Moreover, we use that since $f \neq 0$  as a consequence of \cite[Theorem 2.1]{CasasMateosRoesch2019} we know that $\bar{y} \neq 0$. The proof itself is divided into three steps.
\begin{enumerate}
\item Let $b \in \Hs \setminus \{\bar{b}\}$ be such that $b - \bar{b}$ is constant and let $y \in \Hen$ be the unique element in $\Hen$ such that $e(y,b) = 0$. We claim that $y \neq \bar{y}$. To prove this, we argue by contradiction: If $y = \bar{y}$, then \eqref{eq:e_bilinear_diff} implies that
\begin{equation*} 
\left(b - \bar{b}\right) \cdot \grad \bar{y} = 0 \quad \text{in } \Hend.
\end{equation*}
Since by assumption $b-\bar{b}$ is a nonzero and constant, by \Cref{prop:partial_derivative_of_Hen_zero_implies_zero} it follows that $\bar{y} = 0$. This is a contradiction.

\item We now show that for arbitrary $c > 0$ there exists $\kappa > 0$ such that for all $(y,b) \in \Fad \cap B $ we have
\begin{equation}\label{eq:bound_lower}
\| y - \bar{y} \|_\Ltwo^2 + c | b - \bar{b} |_\Hs^2 \geq \kappa \|b-\bar{b}\|_\Ltwo^2. 
\end{equation}
We begin by proving that there exist $\kappa_1 > 0$ and $M>0$ such that  \eqref{eq:bound_lower} with $\kappa = \kappa_1$ holds for all $(y,b) \in \Fad \cap B$ with $\| b - \bar{b} \|_\Ltwo < M$. To do this, we argue by contradiction: If this is false, then there is a sequence $(y^n, b^n)$ in $(\Fad \cap B) \setminus \{(\bar{y}, \bar{b})\}$ with $\| b^n - \bar{b} \|_\Ltwo \to 0$ as $n \to \infty$ and a nullsequence $(\kappa^n)$  in $(0,\infty)$ such that
\begin{equation}\label{eq:uniqueness_cont_eq}
\| y^n - \bar{y} \|_\Ltwo^2 + c | b^n - \bar{b}|_\Hs^2 < \kappa^n \| b^n - \bar{b}\|_\Ltwo^2.
\end{equation}
It follows that $\| b^n - \bar{b} \|_\Hs \to 0$ as $n \to \infty$. We define a sequence $(\delta_b^n)$ in $\Hs^N$ by setting $\delta_b^n \coloneqq (b^n - \bar{b}) /\|b^n - \bar{b}\|_\Ltwo$ for every $n \in \N$. By definition we have $\| \delta_b^n \|_\Ltwo = 1$. Moreover, it follows from \eqref{eq:uniqueness_cont_eq} that $| \delta_b^n |_\Hs \to 0$ as $n \to \infty$. Hence $(\delta_b^n)$ has a subsequence, which converges weakly in $\Hs$ and strongly in $\Ltwo$ to a constant and nonzero element $\delta_b$ of $\Hs$. Additionally, for the sequence $(\delta_y^n)$ defined by $\delta_y^n \coloneqq (y^n - \bar{y}) /\|b^n - \bar{b}\|_\Ltwo$ we have
\begin{equation*}
\| \delta_y^n \|_\He^2 \leq \frac{C^2\left(\| b^n - \bar{b} \|_\Ltwo^2 + | b^n - \bar{b}|_\Hs^2\right)} {\|b^n - \bar{b} \|_\Ltwo^2}  \leq C^2 (1+ \kappa^n/c),
\end{equation*}
where $C>0$ is the Lipschitz constant of the solution operator $b \mapsto y(b)$ on $B$. Consequently, $(\delta_y^n)$ has a weak cluster point $\delta_y$ in $\Hen$. Let $\phi \in \Testfunctions$ be arbitrary and $\phi^n \coloneqq \phi / \| b^n - b \|_\Ltwo$. Testing \eqref{eq:e_bilinear_diff} for $(y,b)=(y^n,b^n)$ with $\phi^n$ we obtain 
\begin{equation}\label{eq:cont_12}
\int_\Omega \grad \delta_y^n \cdot \grad \phi + (\bar{b} \cdot \grad \delta_y^n) \phi + (\delta_b^n \cdot \grad \bar{y}) \phi  + [\delta_b^n \cdot \grad (y^n-y)] \phi \dx = 0.
\end{equation}
Taking the limit $n \to \infty$ in \eqref{eq:cont_12} and \eqref{eq:uniqueness_cont_eq} we deduce
that $(\delta_y, \delta_b) \in \ker D e(\bar{y}, \bar{b})$ and $\| \delta_y \|_\Ltwo = 0$. This is a contradiction since the mapping $\delta_b \mapsto \delta_y$ such that  $(\delta_y, \delta_b) \in \ker D e (\bar{y}, \bar{b})$ is injective on the space of constant functions. 

It remains to show that there exists $\kappa_2 > 0$ such that \eqref{eq:bound_lower} with $\kappa = \kappa_2$ holds for all $(y,b) \in \Fad \cap B$ with $\| b- \bar{b}\|_\Ltwo \geq M$. For  this, it suffices to prove that
\begin{equation}\label{eq:contr_2}
\min_{(y,b) \in \Fad \cap B \colon  \| b - \bar{b}\|_\Ltwo \geq M} \| y - \bar{y} \|_\Ltwo + c | b - \bar{b} |_\Hs^2
\end{equation}
has a solution, and that the optimal function value $\bar{\kappa}$ in \eqref{eq:contr_2} is strictly larger than zero, since then we have
\begin{equation*}
\| y - \bar{y} \|_\Ltwo^2 + c | b - \bar{b} |_\Hs^2 \geq \bar{\kappa} \geq (\bar{\kappa}/r^2) \| b - \bar{b} \|_\Ltwo^2 
\end{equation*}
for all $(y,b) \in \Fad \cap B$ with $\| b - \bar{b} \|_\Ltwo \geq M$, where $r \coloneqq \sup \{  \| b - \bar{b} \|_\Ltwo \mid (y,b) \in \Fad \cap B \}$. Proving existence of solutions to \eqref{eq:contr_2} is straightforward since
\begin{equation*}
\{ (y,b) \in \Fad \cap B \mid \| b - \bar{b} \|_\Ltwo \geq M \}
\end{equation*}
is weakly sequentially compact; a fact which can be easily proven using that $\Hs$ is compactly embedded in $\Ltwo$ and \Cref{cor:stability_of_conv_state}. It now follows from the first step that the optimal function value is positive, as claimed above.

\item Using \eqref{eq:e_bilinear_diff}, for every $(y,b) \in \Fad \cap  B $ we compute
\begin{multline}\label{eq:estimate_uniq_1}
| \bar{\lambda} e_y(\bar{y},\bar{b}) [y-\bar{y}] + e_b(\bar{y}, \bar{b})[b - \bar{b}] | =| - \bar{\lambda} e_{yb} [y-\bar{y}, b - \bar{b}] | \\ \leq C_{s,q} \|\bar{\lambda}\|_\Lp \| y- \bar{y} \|_{\He} \| b- \bar{b} \|_{\Hs} \leq C_{s,q} D \|\bar{\lambda}\|_\Lp \| b- \bar{b}\|_{\Hs}^2,
\end{multline}
where $C_{s,q}$ is the embedding constant of the embedding of $\Hs$ into $\Lq$ and $D>0$ is the Lipschitz constant of the solution operator $b \mapsto y(b)$ on $B$. Let $F(y,b) \coloneqq  \| y - \ydelta \|_\Ltwo^2 + \alpha | b |_\Hs^2$ for every $(y,b) \in \Fad$. Moreover, let $c = 2\alpha $ and let $\kappa$ be the corresponding constant in \eqref{eq:bound_lower}. We have
\begin{multline*}
F(y,b) - F(\bar{y},\bar{b}) = F_y(\bar{y},\bar{b})(y-\bar{y}) + F_b(\bar{y}, \bar{b}) (b-\bar{b})+ \|y-\bar{y}\|_{\Ltwo}^2 + \alpha |b-\bar{b}|_{\Hs}^2  \\ = \bar{\lambda} [e_y(\bar{y}, \bar{b}) (y - \bar{y}) + e_b ( \bar{y}, \bar{b} ) (b- \bar{b}) ] + \|y-\bar{y}\|_{\Ltwo}^2 + \alpha |b-\bar{b}|_{\Hs}^2  \\ 
 \geq (1/2) \| y - \bar{y} \|_\Ltwo^2 +  \left(\kappa/2 -C_{s,q} D  \| \bar{\lambda} \|_\Lp \right) \| b- \bar{b} \|_\Hs^2.
\end{multline*}
Here we used that $(\bar{y}, \bar{b}, \bar{\lambda})$ is a KKT point for the second equality and the estimates in \eqref{eq:bound_lower} and \eqref{eq:estimate_uniq_1} for the last inequality. It follows that if
\begin{equation*}
\| \bar{\lambda} \|_\Lp < \kappa / (2C_{s,q} D),
\end{equation*} 
then $(\bar{y},\bar{b})$ is the unique solution to \eqref{prob:conv_simplified}.
\end{enumerate}
\end{proof}

\section{Numerical experiments}
In this section we present results of numerical experiments where we solve the learning problem for the linear forward problem from \Cref{ex:poissoneq}. Here we let $\Omega = (0,1)$ and $\rho =0.1$. As a regularization operator for the weights, we consider the particular choice
\begin{equation*}
R(\sigma) = \beta \int_0^d \sigma \, \dx  + \alpha | \sigma |_{L^2((0,d))}^2 \quad \text{for } \sigma \in \Wad, 
\end{equation*}
where $\alpha,\beta > 0$. The obtained results are compared to results obtained for choosing the optimal regularization parameter $\nu$ in \eqref{prob:reg_Sobolev_seminorm} by solving a similar learning problem. 

\subsection{Data}  
We let $\omega = (\omega_1, \dots, \omega_m)^T$ be an $m$-dimensional random variable following a uniform distribution on $[0,1]^m$. We distinguish between two cases. 
\begin{enumerate}[label=(\Alph*)]
\item In the first case,  we let $m=3$ and \begin{equation*}
\uex(x, \omega) = \sin(20\omega_1x)+\omega_3\cos(40\omega_2x) \quad \text{for } x \in (0,1).
\end{equation*}\label{item:creation_uex_caseA}
\item In the second case,  we let $m=3$ and
\begin{equation*}
\uex(x, \omega) =  3 \omega_3\cos(6\pi x+10\omega_1)+ 2 \omega_2 \quad \text{for } x \in (0,1).
\end{equation*}\label{item:creation_uex_caseB}
\end{enumerate}
To create data for training and validation, we take samples $\omega^i$ from $\omega$ and let $u^{\dagger,i} = \uex( \cdot, \omega^i)$. We discretize the problem using linear Lagrange elements for equidistant grid points $0 = x_1 < \cdots < x_{N_E} = 1$, where $N_E=128$. The corresponding (discrete) ground truth state $y^{\dagger,i}$ is computed by solving the discretized forward problem. Noisy data measurements are generated by point wise setting
\begin{equation*}
\ydelta^i(x_j) = y^{\dagger,i}(x_j) + \epsilon \,\xi_{i,j},
\end{equation*}
where $\xi_{i,j} \in \R$ are samples drawn from a normally distributed random variable with mean $0$ and standard deviation $1$, and $\epsilon$ is the noise level. In order to discretize the weights, we use piecewise constant FEM. We let $(u_1,\dots, u_{N_E})$ and $(\sigma_1,\dots, \sigma_{N_E+1})$ denote a basis for the control and the weight FEM spaces, respectively.  The integrals
\begin{equation*}
\m{L}^s(\sigma_k) u_i u_j = \iint\limits_{\Omega \times \Omega}  \frac{(u_i(x) - u_i(x))(u_j(x) - u_j(x))}{|x-y|^{1+2s}} \sigma_k(|x-y|) \dx \dy  \\
\end{equation*}
are computed analytically using symbolic integration. 
\subsection{Applied methods}
Recall that the lower level problem has a unique solution for every regularization weight $\sigma \in \Wad$. Using this, we define the reduced cost functional $F \colon  \Wad \to \R$ by 
\begin{equation*}
F(\sigma) \coloneqq \frac{1}{2} \| u(\sigma) - \uex \|_\Ltwo^2 + R(\sigma), \quad \text{for every } \sigma \in \Wad,
\end{equation*}
where $u(\sigma)$ is the unique solution to the lower level problem with weight $\sigma$. The learning problem \eqref{prob:bilevelsimple} can then be written as follows
\begin{equation}\label{eq:redbilevel1}
\min_{\sigma \in \Wad} F(\sigma) \quad \text{subject to} \quad \sigma_{\min} \leq \sigma \leq \sigma_{\max}
\end{equation}
where  $\sigma_{\min} \equiv \gamma_1 \chi_{[0,\delta]}$ and $\sigma_{\max} \equiv \gamma_2$. A necessary optimality condition for $\sigma^*$ to be a solution of $\eqref{eq:redbilevel1}$ is
\begin{equation}\label{eq:optimality_reduced1}
\langle F' (\sigma^*), \sigma -  \sigma^* \rangle \geq 0 \quad \text{for all } \sigma \in \Wad,
\end{equation}
If $F'(\sigma^*)$ has a Riesz representative $\grad F(\sigma^*)$ in $\Ltwo$, then \eqref{eq:optimality_reduced1} is equivalent to
\begin{equation*}
\sigma^* = P_{\Wad}( \sigma^* - c \grad F (\sigma^*)),
\end{equation*}
for arbitrary $c > 0$, where $P_\Wad$ is the $L^2$-minimal projection on $\Wad$. We define   
\begin{equation*}
\Phi(\sigma) \coloneqq \sigma - P_{\Wad}(\sigma - c \grad F(\sigma)),
\end{equation*}
which can be interpreted pointwise almost everywhere on $(0,d)$ as
\begin{equation*}
\Phi(\sigma)(x) = \sigma(x) - \max\left[\sigma_{\min}(x), \min[\sigma_{\max}(x),\sigma(x) - c \grad F(\sigma)(x)]\right].
\end{equation*}
In order to solve the reduced learning problem we use a non-linear primal-dual active set method provided in \cite{Ito2004}. To solve the unconstrained problems on the inactive set we use a globalized quasi-Newton method accompanied by an Armijo line search (compare \cite[algorithm 11.5 on p 60]{Ulbrich2012}).

Strictly speaking the convergence analysis provided in \cite{Ito2004} does not apply to our setting. In practice the algorithm performed satisfactorily.

\subsection{Results}
We tested the algorithm in MATLAB for various choices of $s$. We create $N_{\text{train}}$ training and $N_{\text{val}}$  validation data vectors. The training set is divided into $N_{\text{batch}}$ training batches. Each training batch then consists of $\text{batchsize} = N_{\text{train}}/N_{\text{batch}}$ training vectors. For $1 \leq i \leq N_{\text{batches}}$ an optimal regularization weight $\sigma^{*,i}$ is computed for the i-th batch by solving the associated learning problem. Subsequently, the optimal weights are tested on the validation set. Thus for each validation vector $(\yex, \uex, \ydelta)$ and each optimal weight $\sigma^{*,i}$ we compute a solution $u_{\sigma^{*,i}}$ to the corresponding lower level problem. We then compute the validation error given by $\| u_{\sigma^{*,i}} - \uex \|_\Ltwo^2$. The average validation error is obtained by averaging the validation error over all validation vectors and weights $\sigma^{*,i}$. We then repeat the same training and validation procedure, but instead of the optimal weight, we only learn the optimal regularization parameter $\nu$ for regularization with a fractional order Sobolev seminorm (corresponding to a weight $\sigma \equiv 1$ ). The obtained training and validation errors for different batchsizes are provided in \Cref{table:training_validation_errorsA,table:training_validation_errorsB}. We notice the following behaviour:
\begin{enumerate}[label=\arabic*.)]
\item In all tested cases, both the training and the validation error for the optimal weight $\sigma$ are smaller than the training and validation error for the optimal regularization parameter $\nu$ (see \Cref{table:training_validation_errorsA,table:training_validation_errorsB}). 
\item Overall, the benefits of being able to choose a distance dependent weight $\sigma$ over choosing only a scalar regularization parameter were less pronounced for larger values of $s$ than for smaller ones (compare \Cref{table:A:a} with \Cref{table:A:b} and \Cref{table:B:a} with \Cref{table:B:b}). 

\item Note that for $s=0.1$ the optimal weight in case \ref{item:creation_uex_caseB} has distinct peaks around $1/3$,  $2/3$, and close to $1$ (see \Cref{fig:fig_weights}). This can be explained by the fact that functions created as in \ref{item:creation_uex_caseB} are always periodic with a period $1/3$.

\item In case \ref{item:creation_uex_caseB} the influence of the weight was much larger compared to case \ref{item:creation_uex_caseA}. The validation error was significantly decreased for $s = 0.1$ (see \Cref{table:B:a}). We attribute this to the fact that in case \ref{item:creation_uex_caseB} both the training and validation functions were periodic with the same period. This constitutes a case where in our opinion the impact of being able to choose a nonlocal weight is clearly visible.  

\item Large batch sizes improve estimates for $\nu$ as well as $\sigma$. In fact, the results for smaller batch sizes (even after taking  several smaller batches involving the same amount of training data in total) can not reach the results obtained for one batch consisting of the total training set (compare the rows).

\end{enumerate}
In general, whether the additional computational effort when using fractional order regularization is justified, depends on the structure of the data. It should be noted that the significant improvement reported in case \ref{item:creation_uex_caseB} is not surprising. In fact, case \ref{item:creation_uex_caseB} was intentionally designed to provide an example where one would expect that being able to choose a distance dependent weight improves the reconstruction quality.

\begin{table}[tbhp]
{\footnotesize
\captionsetup{position=top}
\caption{Average training and validation error for optimal regularization parameter $\nu^*$ (second and third column) and optimal weight $\sigma^*$ (fourth and fifth column) in case \ref{item:creation_uex_caseA}. The training set and the validation set both consisted of 512 data vectors.}\label{table:training_validation_errorsA}
\begin{center}
\subfloat[$s = 0.1$\label{table:A:a}]{
\begin{tabular}{| c | c | c | c | c |} \hline batchsize & train error $(\nu^*)$ & val error $(\nu^*)$ & train error $(\sigma^*)$ & val error  $(\sigma^*)$ \\ \hline 8 & \num{1.805429e-02 } & \num{2.193907e-02 } & \num{1.414873e-02 } & \num{2.043156e-02 }\\ \hline 64 & \num{1.879361e-02 } & \num{2.105682e-02 } & \num{1.609630e-02 } & \num{1.858609e-02 }\\ \hline 512 & \num{1.897812e-02 } & \num{2.085729e-02 } & \num{1.651916e-02 } & \num{1.822781e-02 }\\ \hline \end{tabular} 
}

\subfloat[$s=0.9$\label{table:A:b}]{
\begin{tabular}{| c | c | c | c | c |} \hline batchsize & train error $(\nu^*)$ & val error $(\nu^*)$ & train error $(\sigma^*)$ & val error $(\sigma^*)$ \\ \hline 8 & \num{1.528922e-02 } & \num{1.975799e-02 } & \num{1.510885e-02 } & \num{1.965428e-02 }\\ \hline 64 & \num{1.626830e-02 } & \num{1.851232e-02 } & \num{1.612274e-02 } & \num{1.837110e-02 }\\ \hline 512 & \num{1.653546e-02 } & \num{1.820064e-02 } & \num{1.639905e-02 } & \num{1.804662e-02 }\\ \hline \end{tabular} 
}
\end{center}
}
\end{table}

\begin{table}[tbhp]
{\footnotesize
\captionsetup{position=top}
\caption{Average training and validation error for optimal regularization parameter $\nu^*$ (second and third column) and optimal weight $\sigma^*$ (fourth and fifth column) in case \ref{item:creation_uex_caseB}. The training set and the validation set both consisted of 512 data vectors. }\label{table:training_validation_errorsB}
\begin{center}
\subfloat[\label{table:B:a}$s = 0.1$]{
\begin{tabular}{| c | c | c | c | c |} \hline batchsize & train error $(\nu^*)$ & val error $(\nu^*)$ & train error $(\sigma^*)$ & val error $(\sigma^*)$ \\ \hline 8 & \num{2.029304e-02 } & \num{2.118399e-02 } & \num{1.020525e-02 } & \num{1.090874e-02 }\\ \hline 64 & \num{2.054934e-02 } & \num{2.085002e-02 } & \num{1.015662e-02 } & \num{1.040007e-02 }\\ \hline 512 & \num{2.059307e-02 } & \num{2.080284e-02 } & \num{1.136507e-02 } & \num{1.153481e-02 }\\ \hline \end{tabular} 
}

\subfloat[$s = 0.9$\label{table:B:b}]{
\begin{tabular}{| c | c | c | c | c |} \hline batchsize & train error $(\nu^*)$ & val error $(\nu^*)$ & train error $(\sigma^*)$ & val error $(\sigma^*)$ \\ \hline 8 & \num{1.365286e-02 } & \num{1.440280e-02 } & \num{1.330755e-02 } & \num{1.406108e-02 }\\ \hline 64 & \num{1.384132e-02 } & \num{1.416644e-02 } & \num{1.349480e-02 } & \num{1.382828e-02 }\\ \hline 512 & \num{1.386810e-02 } & \num{1.413716e-02 } & \num{1.352077e-02 } & \num{1.379932e-02 }\\ \hline \end{tabular} 
}
\end{center}
}
\end{table}
\begin{figure}[tbhp]
\centering
\subfloat[$s=0.1$]{
    \includegraphics[width=0.4\columnwidth]{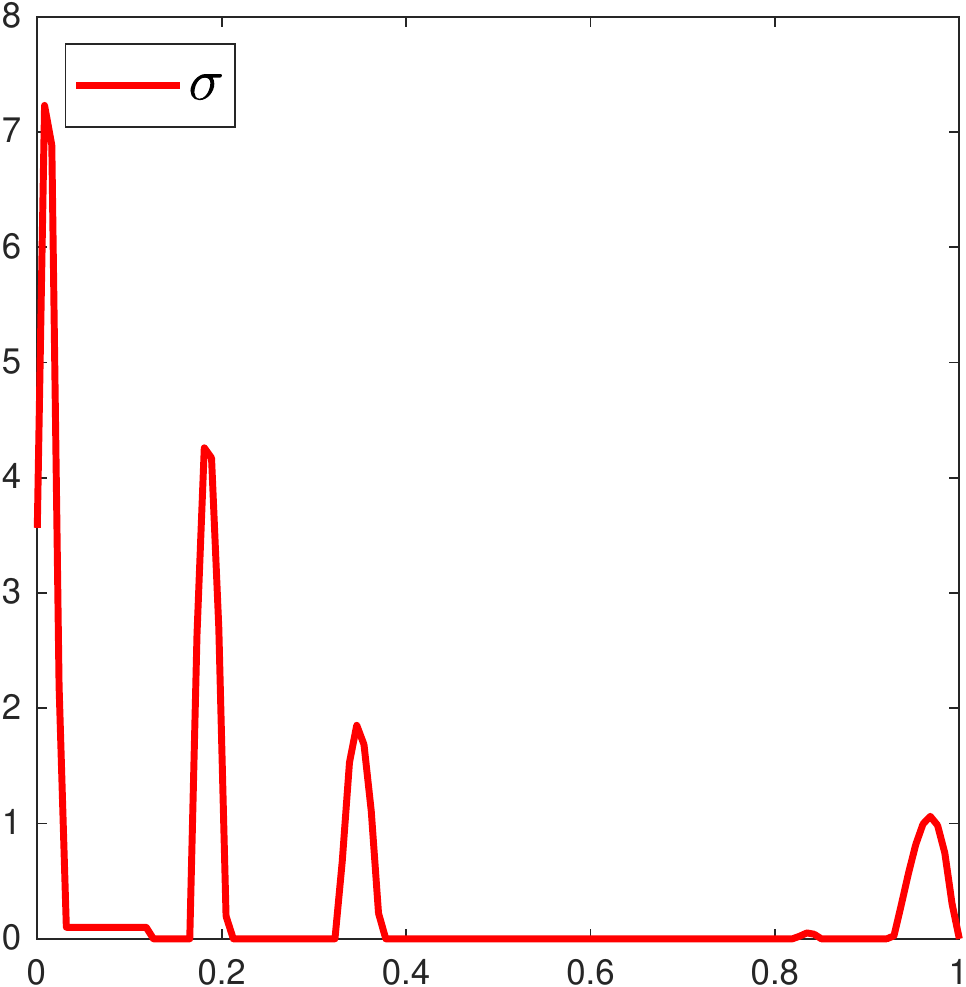}}
\subfloat[$s=0.9$]{
\includegraphics[width=0.4\columnwidth]{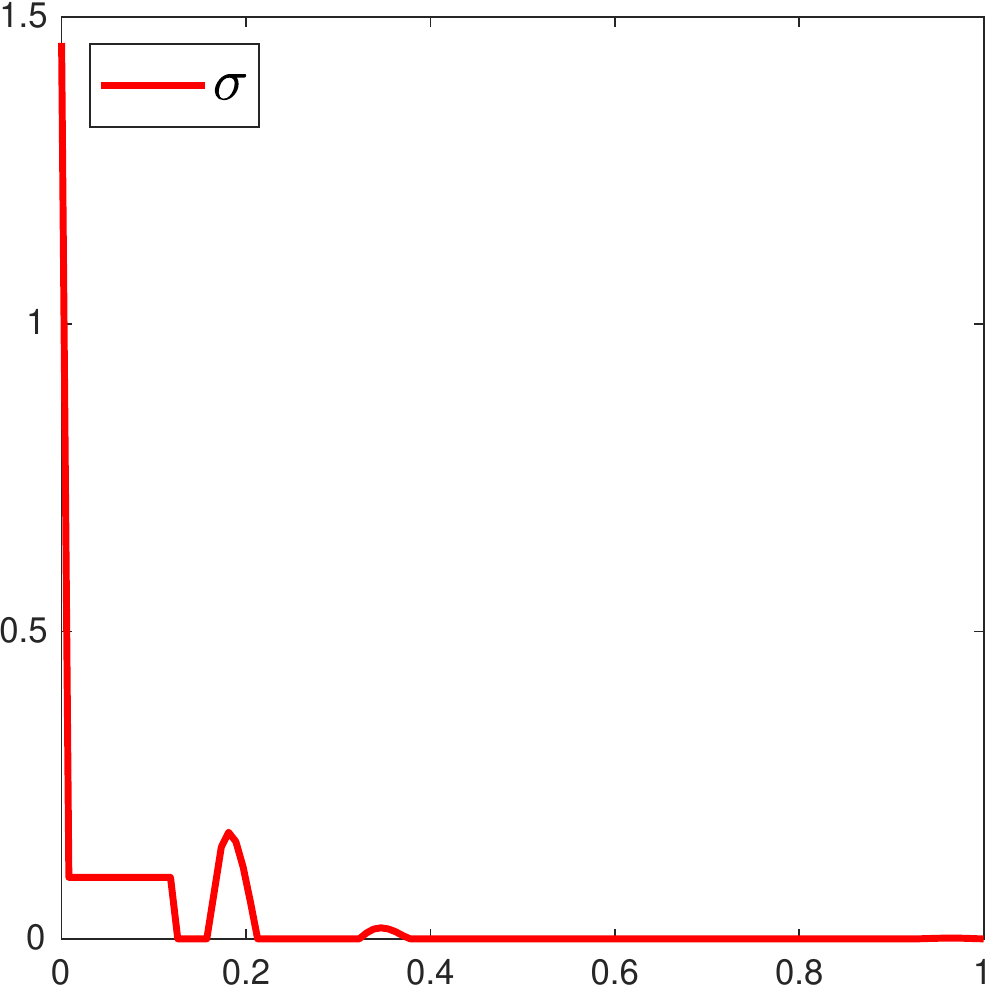}}

\subfloat[$s=0.1$]{\includegraphics[width=0.4\columnwidth]{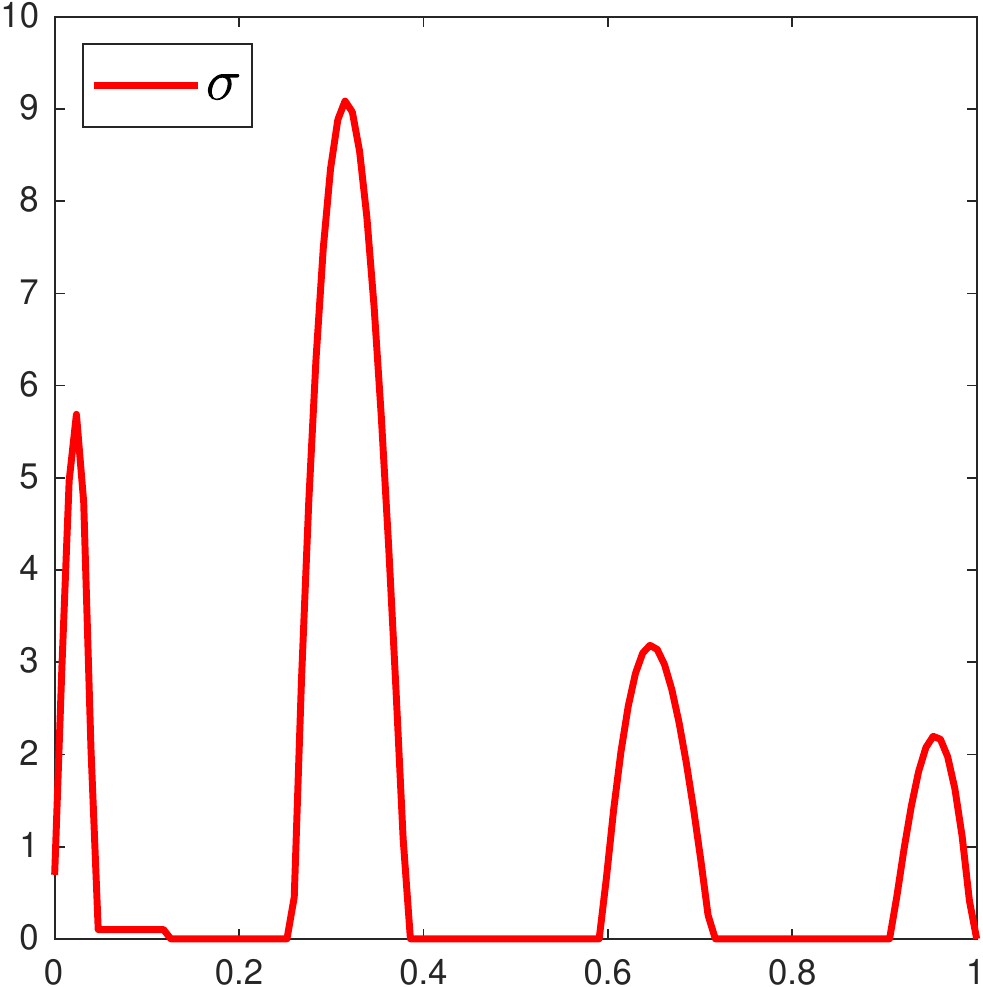}}
\subfloat[$s=0.9$]{\includegraphics[width=0.4\columnwidth]{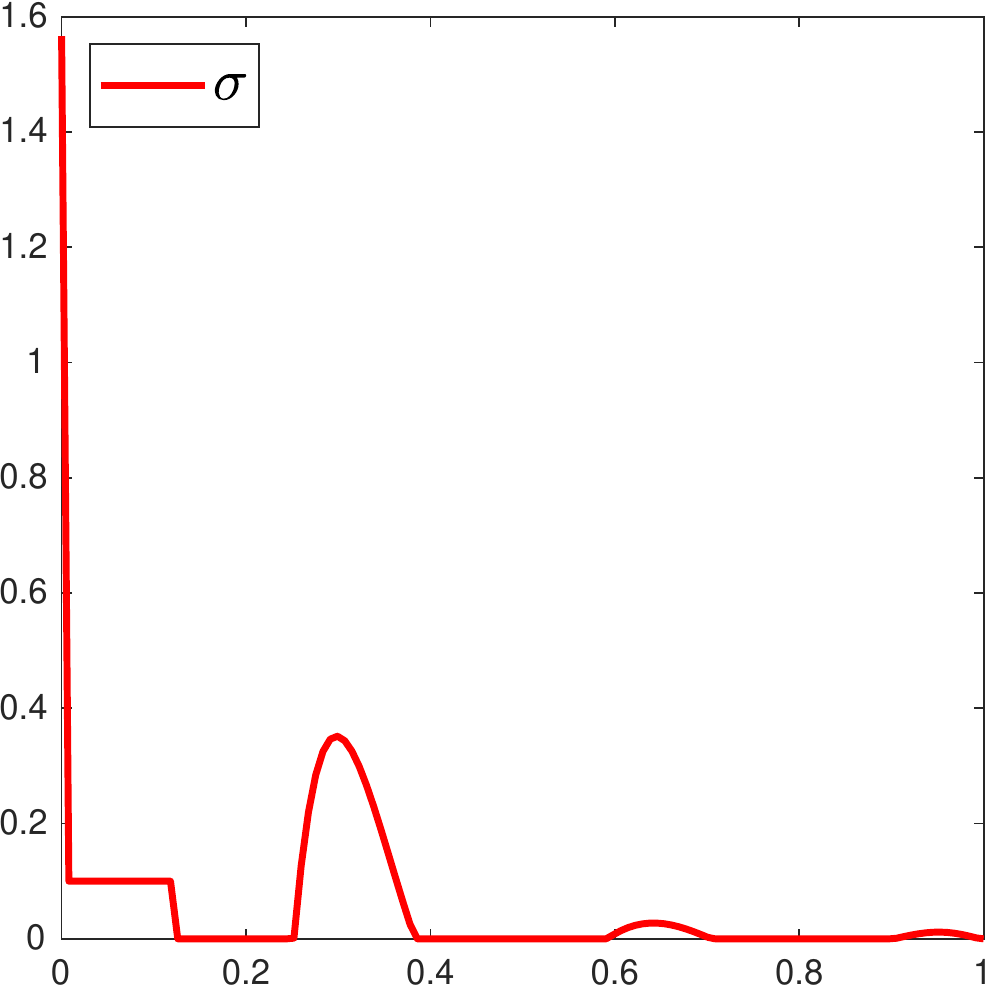}}

\caption{Optimal weights for linear state equation in case \ref{item:creation_uex_caseA} (first row) and case \ref{item:creation_uex_caseB} (second row). $1 \%$ additive noise was used. The training set consisted of 512 data vectors.}\label{fig:fig_weights}
\end{figure}

\begin{figure}[tbhp]
\centering
\subfloat[With optimal weight for $s=0.1$]{

    \includegraphics[width=0.4\columnwidth]{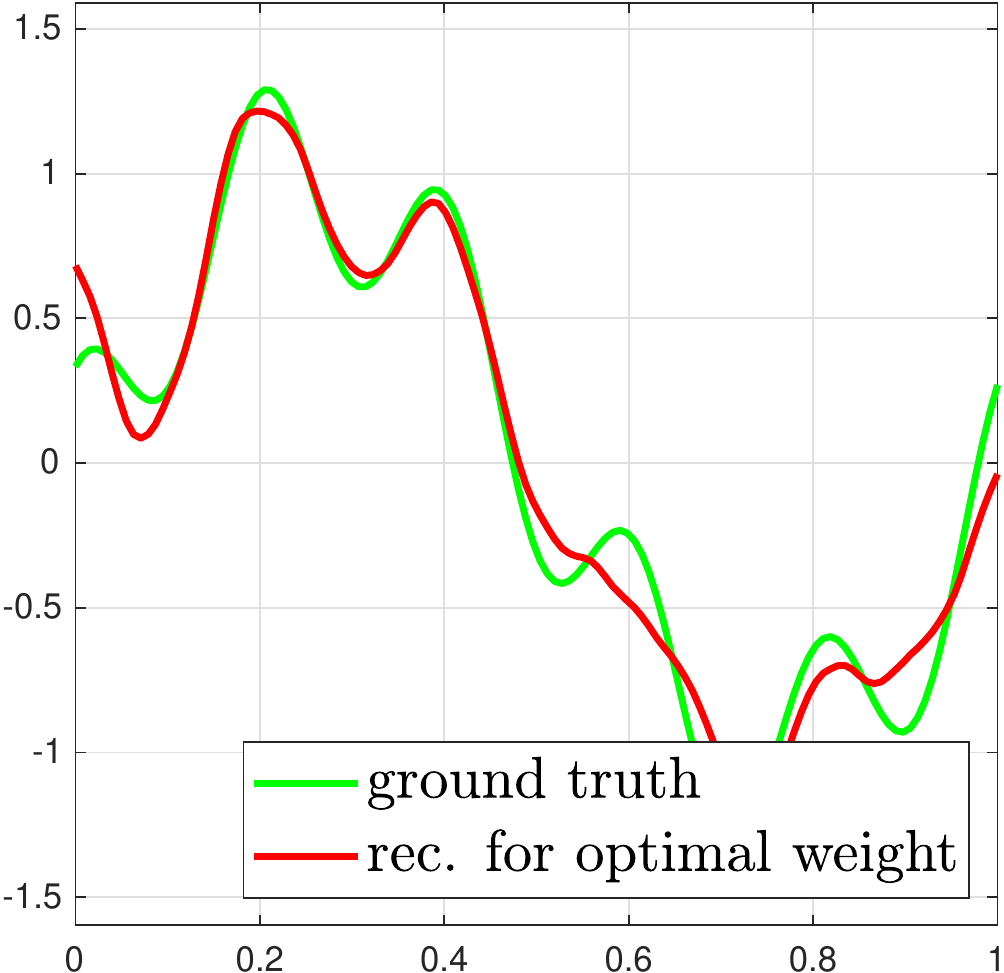}}
\subfloat[With optimal weight for $s=0.9$]{
    \includegraphics[width=0.4\columnwidth]{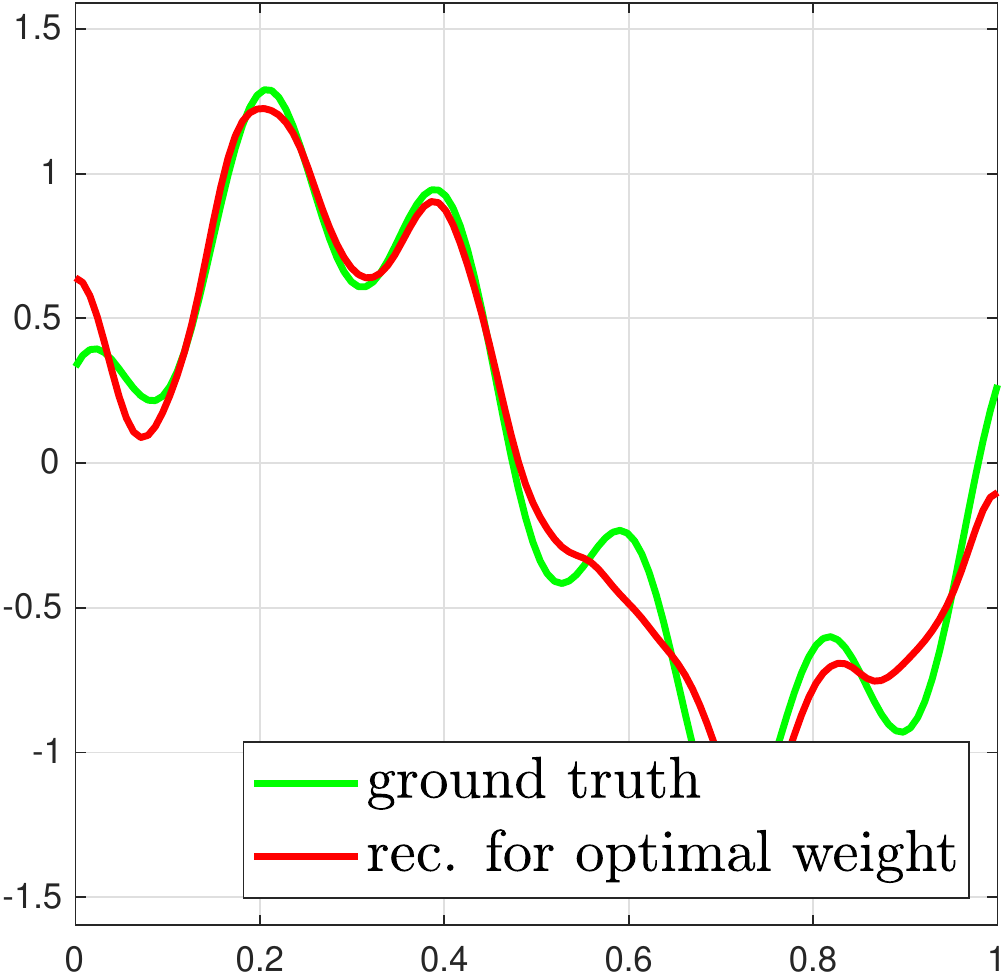}}

\caption{Ground truth and reconstructed controls for one data vector from the validation set for linear state equation in case \ref{item:creation_uex_caseA}. $1 \%$ additive noise was added to create noisy the measurements. The training and validation set both consisted of 512 data vectors.}
\end{figure}
\FloatBarrier

\bibliographystyle{abbrvurl}
\bibliography{extracted_bibliography.bib}
\end{document}